\documentclass[11pt]{article}

\usepackage[a4paper,margin=1in]{geometry}
\usepackage{amsmath,amssymb,amsthm}
\usepackage{aliascnt}
\usepackage{booktabs}
\usepackage{graphicx}
\usepackage{placeins}
\usepackage{microtype}
\usepackage{xcolor}
\definecolor{linkblue}{RGB}{0,76,140}
\usepackage[
  colorlinks=true,
  linkcolor=linkblue,
  citecolor=linkblue,
  urlcolor=linkblue
]{hyperref}
\usepackage[nameinlink,capitalize,noabbrev]{cleveref}
\hypersetup{
  pdftitle={An Extremal Spectral Problem for Triangle-Free Graphs Arising from Quantum Transport},
  pdfauthor={Xingkun Song},
  pdfsubject={Extremal continuous-time quantum walks on triangle-free graphs},
  pdfkeywords={continuous-time quantum walk, triangle-free graph,
    quantum transport, flag algebra, graphon, spectral extremal graph theory}
}

\newtheorem{theorem}{Theorem}[section]

\newaliascnt{proposition}{theorem}
\newtheorem{proposition}[proposition]{Proposition}
\aliascntresetthe{proposition}

\newaliascnt{lemma}{theorem}
\newtheorem{lemma}[lemma]{Lemma}
\aliascntresetthe{lemma}

\newaliascnt{corollary}{theorem}
\newtheorem{corollary}[corollary]{Corollary}
\aliascntresetthe{corollary}

\newaliascnt{conjecture}{theorem}
\newtheorem{conjecture}[conjecture]{Conjecture}
\aliascntresetthe{conjecture}

\newaliascnt{problem}{theorem}
\newtheorem{problem}[problem]{Problem}
\aliascntresetthe{problem}

\theoremstyle{remark}
\newaliascnt{remark}{theorem}
\newtheorem{remark}[remark]{Remark}
\aliascntresetthe{remark}
\theoremstyle{plain}

\crefname{theorem}{Theorem}{Theorems}
\Crefname{theorem}{Theorem}{Theorems}
\crefname{proposition}{Proposition}{Propositions}
\Crefname{proposition}{Proposition}{Propositions}
\crefname{lemma}{Lemma}{Lemmas}
\Crefname{lemma}{Lemma}{Lemmas}
\crefname{corollary}{Corollary}{Corollaries}
\Crefname{corollary}{Corollary}{Corollaries}
\crefname{conjecture}{Conjecture}{Conjectures}
\Crefname{conjecture}{Conjecture}{Conjectures}
\crefname{problem}{Problem}{Problems}
\Crefname{problem}{Problem}{Problems}
\crefname{remark}{Remark}{Remarks}
\Crefname{remark}{Remark}{Remarks}

\newcommand{\Tr}{\operatorname{Tr}}
\newcommand{\homd}{t}
\newcommand{\e}{\mathrm e}
\newcommand{\ii}{\mathrm i}
\newcommand{\cT}{\mathcal T}
\newcommand{\Id}{\operatorname{Id}}
\newcommand{\opnorm}[1]{\lVert #1\rVert_{\mathrm{op}}}
\newcommand{\HSnorm}[1]{\lVert #1\rVert_{\mathrm{HS}}}

\title{An Extremal Spectral Problem for Triangle-Free Graphs Arising from
Quantum Transport}
\author{Xingkun Song$^{1,2}$\\
{\small $^{1}$ School of Mathematics and Statistics, Qinghai Minzu University,}\\[-2pt]
{\small Xining, Qinghai 810007, P.R. China}\\
{\small $^{2}$ Qinghai Institute of Applied Mathematics,}\\[-2pt]
{\small Xining, Qinghai 810007, P.R. China}\\
{\small \href{mailto:xksong@126.com}{xksong@126.com}}}
\date{}

\begin{document}

\maketitle

\begin{abstract}
For a graph $G$ of order $n$ with adjacency matrix $A$, let $F_G(t)$ be the
average of $|(\exp(-\ii tA))_{vu}|^2$ over distinct ordered vertex pairs.
Under the dense scaling $t=\tau/n$, the quantities $n^2F_G(\tau/n)$ lead to
a graphon functional $\Phi_\tau$ whose leading term is $\tau^2$ times the
edge density and whose remaining terms form a weighted alternating series
of even cycle densities.  For
$0\le\tau\le\tau_{\mathrm c}$, we determine the exact maximum of
$\Phi_\tau$ over all triangle-free graphons.  The balanced complete
bipartite graphon $B_1$ is the unique maximizer, up to weak isomorphism,
when $0<\tau\le\tau_{\mathrm c}$, where $\tau_{\mathrm c}$ is the unique
positive solution of
\[
   \tau_{\mathrm c}=4\sin(\tau_{\mathrm c}/2),
   \qquad \tau_{\mathrm c}\approx3.79099,
\]
and the maximum equals $4(1-\cos(\tau/2))$.  This threshold is sharp:
$B_1$ is not globally optimal for $\tau>\tau_{\mathrm c}$.  For
$\tau>\tau_{\mathrm c}$, the unique maximizer within the bipartite class,
up to weak isomorphism, is the balanced bipartite graphon $B_{q_\tau}$,
where $q_\tau\in(0,1)$; the unrestricted maximization problem beyond
$\tau_{\mathrm c}$ remains open.  We also prove an explicit edge density
deficit bound and quantitative cut distance stability, uniform for $\tau$
in compact subintervals of $(0,\tau_{\mathrm c})$, together with
qualitative cut distance stability on compact subintervals of
$(0,\tau_{\mathrm c}]$.  The corresponding finite triangle-free extremal
values converge locally uniformly to the graphon maximum, with an
$O(n^{-1})$ error uniformly on $[0,\tau_{\mathrm c}]$.  The proof uses a
coefficient criterion for spectral graphon functionals and combines a
sixth-degree spectral minorant with a four-vertex inequality and a six-vertex
moment inequality; the latter is established by an exact rational
flag algebra certificate.
\end{abstract}

\noindent\textbf{Keywords.}
Continuous-time quantum walk; quantum transport; triangle-free graph; flag
algebra; graphon; spectral extremal graph theory.

\noindent\textbf{MSC 2020.}
05C35, 05C50, 81P45, 90C22.

\section{Introduction}\label{sec:introduction}

Continuous-time quantum walks connect spectral graph theory with coherent
transport on graphs.  If $A=A(G)$ is the adjacency matrix of a graph $G$,
put $U_G(t)=\e^{-\ii tA}$.  The entry $(U_G(t))_{vu}$ is the transition
amplitude from $u$ to $v$ at time $t$.  State transfer and related spectral
phenomena are surveyed in \cite{Godsil2012}; short-time transition
probabilities were studied in \cite{Szigeti2019}, and restrictions on graph
order arising from quantum walks in \cite{Coutinho2019}.  The proof that no
tree on more than three vertices admits perfect state transfer is a recent
example of a quantum walk question leading to graph-theoretic rigidity
\cite{CoutinhoJulianoSpier2024}.  In contrast to perfect state transfer,
which concerns a prescribed pair of vertices, we aggregate the transition
probabilities over all distinct ordered pairs and maximize the result over a
hereditary graph class.

We consider triangle-free graphs.  Mantel's theorem \cite{Mantel1907} states
that every triangle-free graph $G$ on $n$ vertices satisfies
\[
 e(G)\le \left\lfloor\frac{n^2}{4}\right\rfloor,
\]
with equality if and only if
$G\cong K_{\lfloor n/2\rfloor,\lceil n/2\rceil}$.  It determines the
quadratic term of our objective, whereas the higher terms involve even
closed walks.  Under the dense scaling $t=\tau/n$, all normalized even
closed walk counts remain visible and the limit is a parameterized graphon
functional involving every even cycle density.

This places the problem alongside spectral Tur\'an theory and related
full-spectrum extremal questions
\cite{CioabaDesaiTait2024,Nikiforov2002,TaitTobin2017,WangKangXue2023}.
We determine the sharp interval on which the balanced complete bipartite
graphon is the unique triangle-free maximizer, prove stability and finite
convergence, and determine the complete phase transition within the
bipartite class.  A principal ingredient is a new six-vertex inequality for
the edge, $C_4$, and $C_6$ densities of triangle-free graphons, proved by an
exact rational flag algebra certificate.

\subsection{Main results}

Let $\cT_n$ be the family of triangle-free graphs on $n$ vertices and set
\[
 F_G(t)=\frac{1}{n(n-1)}
 \sum_{u\ne v}|(\e^{-\ii tA(G)})_{vu}|^2.
\]
The scaling $t=\tau/n$ keeps $tA(G)$ at order one for dense graphs.  Set
\begin{equation}\label{eq:Phi-n}
 \Phi_n(G,\tau)=n^2F_G(\tau/n).
\end{equation}
For a finite graph $H$ and a graphon $W$, write $\homd(H,W)$ for the
homomorphism density of $H$ in $W$.  The limiting functional is
\begin{equation}\label{eq:Phi-series}
 \Phi_\tau(W)=\tau^2\homd(K_2,W)
 +2\sum_{r=2}^{\infty}
 \frac{(-1)^{r+1}\tau^{2r}}{(2r)!}\homd(C_{2r},W).
\end{equation}
Let
\begin{equation}\label{eq:Mtriangle}
 M_\triangle(\tau)
 =\sup\{\Phi_\tau(W): W\text{ is a graphon and }\homd(K_3,W)=0\},
\end{equation}
and define $B_q$, for $q\in[0,1]$, by splitting $[0,1]$ into two sets of
measure $1/2$, assigning value $q$ across the two sets, and assigning value
zero within them.  Thus $B_1$ is the balanced complete bipartite graphon.
We use weak isomorphism in the standard sense of cut distance zero; precise
graphon definitions are given in \cref{sec:dense-bridge}.
Let $\tau_{\mathrm c}$ be the positive solution in $(0,2\pi)$ of
\begin{equation}\label{eq:critical-equation}
 \tau=4\sin(\tau/2),
 \qquad \tau_{\mathrm c}\approx3.79099.
\end{equation}

\begin{theorem}\label{thm:global}
For $0\le\tau\le\tau_{\mathrm c}$,
\begin{equation}\label{eq:global-sharp}
 M_\triangle(\tau)=4\left(1-\cos\frac\tau2\right).
\end{equation}
If $0<\tau\le\tau_{\mathrm c}$ and a triangle-free graphon $W$ satisfies
$\Phi_\tau(W)=M_\triangle(\tau)$, then $W$ is weakly isomorphic to $B_1$.
For every $\tau>\tau_{\mathrm c}$, the graphon $B_1$ is not a maximizer.
\end{theorem}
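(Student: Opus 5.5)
The first step is to rewrite $\Phi_\tau$ spectrally. Let $T_W$ be the kernel operator of $W$, with eigenvalues $(\lambda_i)$. Then $\homd(C_{2r},W)=\sum_i\lambda_i^{2r}$ for $r\ge2$, while $\homd(K_2,W)=\int W$ is \emph{not} $\sum_i\lambda_i^2=\int W^2$. Resumming the cosine series in \eqref{eq:Phi-series} therefore gives
\[
 \Phi_\tau(W)=\tau^2\Bigl(\int W-\int W^2\Bigr)+\sum_i h(\tau\lambda_i),
 \qquad h(y)=2(1-\cos y).
\]
The first bracket is nonnegative, vanishes exactly for $\{0,1\}$-valued $W$, and is at most $\tfrac12\homd(K_2,W)$-type quantities.

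For $B_q$ the eigenvalues are $\pm q/2$, so
\[
 \Phi_\tau(B_q)=\tfrac{\tau^2}{2}(q-q^2)+4\bigl(1-\cos\tfrac{\tau q}{2}\bigr).
\]
At $q=1$ this equals $4(1-\cos(\tau/2))$, which gives the lower bound in \eqref{eq:global-sharp}. Its $q$-derivative at $q=1$ is
\[
 2\tau\sin(\tau/2)-\tau^2/2,
\]
which is negative exactly when $\tau>4\sin(\tau/2)$, i.e.\ when $\tau>\tau_{\mathrm c}$. So for such $\tau$ some $B_q$ with $q<1$ beats $B_1$, which proves the last sentence of the theorem.

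For the upper bound I would use that a triangle-free graphon has $\opnorm{T_W}\le\sqrt{\homd(K_2,W)/2}\le 1/2$. This is the graphon form of Nikiforov's bound, obtainable from $\homd(K_3,W)=0$ and $\sum_i\lambda_i^3=0$ together with a four-vertex estimate. Consequently all arguments $\tau\lambda_i$ lie in $[-\tau/2,\tau/2]$.

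On this interval I would bound $h$ from above by an even sextic $p(y)=\alpha y^2+\beta y^4+\gamma y^6$ satisfying $p\ge h$ on $[-\tau/2,\tau/2]$, with $p(\tau/2)=h(\tau/2)$ and $p'(\tau/2)=h'(\tau/2)$. The coefficients depend on $\tau$, and checking $p\ge h$ is a one-variable calculus verification. This is the ``coefficient criterion'' step. It yields
\[
 \Phi_\tau(W)\le \tau^2\homd(K_2,W)+(\alpha-\tau^2)\!\int\! W^2+\beta\,\homd(C_4,W)+\gamma\,\homd(C_6,W).
\]
The right-hand side is a linear combination of the edge density, $\int W^2$, $\homd(C_4,W)$ and $\homd(C_6,W)$, and it equals $4(1-\cos(\tau/2))$ at $B_1$. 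What remains is to show that this combination is maximized over triangle-free graphons by $B_1$.

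The sign pattern should be $\beta<0$ with $\gamma$ small. In that regime I would combine three ingredients:
\begin{itemize}
\item the four-vertex inequality $\homd(C_4,W)\ge(\int W^2)^2\cdot(\text{const})$, or the sharper triangle-free relation between $\homd(C_4,W)$ and the edge density;
\item the bound $\sum_i\lambda_i^6\le\lambda_{\max}^2\sum_i\lambda_i^4\le\tfrac14\homd(C_4,W)$;
\item the six-vertex moment inequality linking $\homd(K_2,W)$, $\homd(C_4,W)$ and $\homd(C_6,W)$ for triangle-free graphons, which is the exact rational flag algebra certificate.
\end{itemize}
Eliminating $\homd(C_6,W)$ and $\homd(C_4,W)$ reduces the problem to a two-variable inequality in the edge density and $\int W^2$. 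Maximizing it explicitly should give $4(1-\cos(\tau/2))$ whenever $\tau\le\tau_{\mathrm c}$; the threshold enters precisely through the sign of $2\tau\sin(\tau/2)-\tau^2/2$, which controls the coefficient of $(\int W-\int W^2)$ after tangency at $B_1$.

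For uniqueness I would trace the equality cases. Equality forces:
\begin{itemize}
\item $W$ to be $\{0,1\}$-valued (for $\tau>0$, using that this coefficient is strictly positive);
\item all nonzero eigenvalues to lie at $\pm\tau/2$, once $p-h$ has been shown to vanish only at $0$ and $\pm\tau/2$;
\item equality in Mantel's theorem, $\homd(K_2,W)=1/2$.
\end{itemize}
By the graphon stability form of Mantel, these conditions force $W\cong B_1$.

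The main obstacle is the middle step. One has to choose $p$ so that it dominates $h$ all the way up to $\tau=\tau_{\mathrm c}$, where the tangency condition at $B_1$ becomes degenerate. At the same time the resulting linear combination must be certifiable by the six-vertex flag algebra inequality. My first attempt would be to fix $p$ by the tangency conditions together with $p''(\tau/2)=h''(\tau/2)$ or a contact condition at $0$, then check numerically that the resulting $(\beta,\gamma)$ lie in the cone certified by the flag algebra SDP. If that fails near $\tau_{\mathrm c}$, I would handle $\tau=\tau_{\mathrm c}$ by a limiting or continuity argument.
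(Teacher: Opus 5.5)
Your outer framework is the paper's: the spectral form, the formula for $\Phi_\tau(B_q)$, the sharpness argument via $f_\tau'(1)=2\tau\sin(\tau/2)-\tau^2/2<0$ for $\tau>\tau_{\mathrm c}$, and a sextic majorant of $2(1-\cos y)$ touching at $\pm\tau/2$. That majorant reduces the upper bound to a linear inequality in $p=\homd(K_2,W)$, $\homd(C_4,W)$ and $\homd(C_6,W)$.

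\textbf{The gap.} This linear inequality is the whole content of the upper bound, and you never prove it. The one $C_6$ estimate you state explicitly cannot prove it up to $\tau_{\mathrm c}$. In the paper's normalization the reduced claim is $a_\tau d_4+b_\tau d_6-\tau^2d_p\ge0$, where $d_4=\homd(C_4,W)-\frac18$, $d_6=\homd(C_6,W)-\frac1{32}$, $d_p=p-\frac12$, and $b_\tau<0$. Suppose you combine your bound $\frac14d_4-d_6\ge0$ with $d_4-d_p\ge0$ (that is, $\homd(C_4,W)\ge p-\frac38$) and $-d_p\ge0$. The three directions are independent, so the weights are forced. The weight on $-d_p$ comes out as $\tau^2-a_\tau-\frac14b_\tau=32(1-\cos\frac\tau2)-3\tau^2$. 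This is negative on an interval ending at $\tau_{\mathrm c}$; it changes sign near $\tau\approx3.66$.

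The failure is structural. At $\tau_{\mathrm c}$ the map $q\mapsto\Phi_\tau(B_q)$ is stationary at $q=1$. Hence any inequality used there with positive weight must be tight to first order along $B_q$ at $q=1$. Your bound is not: $\frac14\homd(C_4,B_q)-\homd(C_6,B_q)=q^4(1-q^2)/32$. For the same reason, the quadratic coefficient of your sextic must equal the Taylor coefficient exactly. That is contact of order four at $0$, as in the paper's Hermite minorant $L(\tau x)\ge a_\tau x^4+b_\tau x^6$. If you fix it instead by matching second derivatives at $\tau/2$, a positive multiple of $\int W^2$ remains. Absorbing it through $\int W^2\le\int W$ is again first-order slack, since $\int B_q-\int B_q^2=q(1-q)/2$.

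\textbf{What is missing.} You need a valid inequality with negative $C_6$ coefficient that is tight at $B_1$ and tangent along $B_q$. You also need to check that $(a_\tau,b_\tau,-\tau^2)$ lies in the resulting cone for every $0<\tau\le\tau_{\mathrm c}$. The paper's inequality is $881-2400p+2856\homd(C_4,W)-1216\homd(C_6,W)\ge0$, proved by an exact rational six-vertex flag algebra certificate. Its cone check is $-\frac{75}{38}b_\tau>0$, $a_\tau+\frac{357}{152}b_\tau>0$, and $\tau^2-a_\tau-\frac38b_\tau=\tau(4\sin\frac\tau2-\tau)\ge0$. Appealing to ``the exact rational flag algebra certificate'' without stating the inequality does not supply this step.

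Your fallback does not repair it. Continuity of $M_\triangle$ can carry the value to the single endpoint $\tau_{\mathrm c}$ from below. It cannot fill an interval on which the certificate fails, and it never yields uniqueness.

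\textbf{Uniqueness.} Your sketch needs changing here too. With the correct sextic there is no positive multiple of $\int W-\int W^2$ left to force $\{0,1\}$-values. The threshold quantity $\tau(4\sin\frac\tau2-\tau)$ is the weight on $\frac12-p$, not on $\int W-\int W^2$. It vanishes at $\tau_{\mathrm c}$, so Mantel equality is not forced there directly. The paper instead argues as follows:
\begin{enumerate}
\item Strictness of the minorant places all nonzero eigenvalues at $\pm\frac12$.
\item $\sum_j\lambda_j^3=0$ gives equal multiplicities.
\item $W=0$ is excluded, since $\Phi_\tau(0)=0<\Phi_\tau(B_1)$.
\item The chain $\frac12\le\int W^2\le p\le\frac12$ follows.
\end{enumerate}
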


The next result makes the stability below the threshold uniform in the
scaled time, in the spirit of classical Tur\'an stability
\cite{Furedi2015}.  Define
\begin{equation}\label{eq:gamma-introduction}
 \gamma_\tau=\tau\left(4\sin\frac\tau2-\tau\right).
\end{equation}

\begin{theorem}\label{thm:stability}
Let $I\subset(0,\tau_{\mathrm c})$ be nonempty and compact, and put
$\gamma_I=\min_{\tau\in I}\gamma_\tau>0$.  If $\tau\in I$, $W$ is a
triangle-free graphon, and
\[
 \varepsilon=\Phi_\tau(B_1)-\Phi_\tau(W)\ge0,
\]
then
\begin{align}
 0\le\frac12-\homd(K_2,W)
 &\le\frac{\varepsilon}{\gamma_I},
 \label{eq:edge-deficit-stability}\\
 \delta_\square(W,B_1)
 &\le2\sqrt{\frac{\varepsilon}{\gamma_I}}.
 \label{eq:cut-stability}
\end{align}
\end{theorem}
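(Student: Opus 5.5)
The plan is to derive both bounds from one linear-deficit inequality,
\[
 \Phi_\tau(W)\le\Phi_\tau(B_1)-\gamma_\tau\Bigl(\tfrac12-\homd(K_2,W)\Bigr),
 \qquad 0<\tau<\tau_{\mathrm c},\ W\text{ triangle-free},
 \tag{$\ast$}
\]
which I expect to come out of the proof of \cref{thm:global} once that proof is tracked with its slack kept, and then to use a Füredi-type Mantel stability argument in graphon form.

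\emph{Step 1: obtain $(\ast)$.} Write the spectral decomposition of $W$ as an integral operator, with eigenvalues $\lambda_i$. Since $\homd(C_{2r},W)=\sum_i\lambda_i^{2r}$ for $r\ge2$, the series \eqref{eq:Phi-series} resums to
\[
 \Phi_\tau(W)=\tau^2\Bigl(\homd(K_2,W)-\textstyle\sum_i\lambda_i^2\Bigr)+\sum_i2\bigl(1-\cos(\tau\lambda_i)\bigr).
\]
The global proof bounds the cosine part by a sixth-degree spectral minorant. That minorant is controlled through the four-vertex inequality and the six-vertex flag-algebra inequality in terms of $e=\homd(K_2,W)$. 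This produces an upper bound $g_\tau(e)$ with $g_\tau(1/2)=4(1-\cos(\tau/2))$.

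I would check that $g_\tau$ is concave, or at least lies below its tangent line at $e=1/2$. The slope of that tangent should be
\[
 \frac{d}{de}\Bigl[\,2e\,\tau'^2\text{-type expression}\Bigr]\quad\text{evaluated along }B_{2e},
\]
that is, $\tfrac{d}{dq}\Phi_\tau(B_q)\big|_{q=1}\cdot\tfrac{dq}{de}$. Since $\Phi_\tau(B_q)=\tau^2q/2-\tau^2q^2/2+4(1-\cos(\tau q/2))$, this slope equals $\tau(4\sin(\tau/2)-\tau)=\gamma_\tau$. That this slope is exactly $\gamma_\tau$, and that it vanishes at $\tau_{\mathrm c}$, is consistent with the threshold in \cref{thm:global}, so I expect $(\ast)$ to be precisely the sharpened form of the global argument.

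\emph{Step 2: the edge deficit bound \eqref{eq:edge-deficit-stability}.} The lower bound $\homd(K_2,W)\le1/2$ is Mantel's theorem for graphons. From $(\ast)$ we get $\gamma_\tau(1/2-\homd(K_2,W))\le\varepsilon$. Since $\gamma_\tau\ge\gamma_I>0$ on $I$, this gives \eqref{eq:edge-deficit-stability}, and the bound is uniform in $\tau\in I$.

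\emph{Step 3: the cut distance bound \eqref{eq:cut-stability}.} Put $\delta=1/2-\homd(K_2,W)$; by Step 2 it suffices to show $\delta_\square(W,B_1)\le2\sqrt\delta$.

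First I would prove the graphon version of Füredi's theorem: a triangle-free $W$ with density $1/2-\delta$ admits a measurable partition $[0,1]=S\cup S^c$ such that $\int_{S^2\cup (S^c)^2}W\le\delta$ (up to a constant). I would obtain it either by applying Füredi's finite statement to $W$-random graphs and passing to the limit, or directly by choosing $S$ as a neighbourhood $\{y:W(x,y)>0\}$ of a maximum-degree point and averaging.

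Let $a=|S|$. The cross density is at most $2a(1-a)=1/2-2(a-1/2)^2$, which gives $|a-1/2|\lesssim\sqrt\delta$. Then $\|W-B_1\|_\square$, after aligning $S$ with one half of $B_1$, splits into three parts:
\begin{itemize}
\item the inside mass, which is $O(\delta)$;
\item the missing cross mass, which is $O(\delta)$;
\item the symmetric difference of the part sizes, which is $O(\sqrt\delta)$.
\end{itemize}
Tracking constants should give $2\sqrt\delta$.

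The main obstacle is Step 1. The global proof might only give the maximum value without a linear slack in $e$, in which case I would need to redo its final one-variable optimization, showing concavity in $e$ of the bound coming from the minorant. A secondary issue is getting the clean constant $2$ in Step 3, where Füredi's linear bound must not lose more than the square-root term allows.
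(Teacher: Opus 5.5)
Your plan is the paper's proof. Your $(\ast)$ is exactly \eqref{eq:abstract-edge-deficit} with $h(x)=L(\tau x)$ and $c=\tau^2$: it follows by discarding the nonnegative $\alpha$-, $\beta$- and $\sum_jR(\lambda_j)$-terms in the exact decomposition \eqref{eq:abstract-deficit}, so no concavity check is needed, and the slope $\tau^2-a_\tau-\tfrac{3}{8}b_\tau$ is identified with $\gamma_\tau$ in \cref{lem:coefficient-signs}. Your derivative of $\Phi_\tau(B_q)$ at $q=1$ only shows that $\gamma_\tau$ cannot be improved, not that $(\ast)$ holds.

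Step 3 is \cref{lem:quantitative-mantel}. With $p=\homd(K_2,W)=\tfrac{1}{2}-\eta$, that lemma pivots on a point $x$ with $\int_0^1W(x,y)d(y)\,dy\ge p^2$ ($d$ the degree function) to get internal mass at most $2p\eta$. Your maximum-degree pivot gives internal mass at most $\eta$, and this also yields the constant $2$ once you use the trivial bound $\delta_\square(W,B_1)\le\tfrac{1}{2}$ for $\eta\ge\tfrac{1}{16}$.
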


The exponent $1/2$ in \eqref{eq:cut-stability} is best possible.  Although
the explicit coefficient degenerates at $\tau_{\mathrm c}$, compactness and
uniqueness yield qualitative stability on compact subintervals of
$(0,\tau_{\mathrm c}]$; see \cref{cor:qualitative-stability}.

\begin{theorem}\label{thm:finite-dense}
Put
\[
 m_n(\tau)=\max_{G\in\cT_n}\Phi_n(G,\tau).
\]
For every compact interval $J\subset\mathbb R$,
\begin{equation}\label{eq:finite-variational-limit}
 \sup_{\tau\in J}|m_n(\tau)-M_\triangle(\tau)|\longrightarrow0.
\end{equation}
For every compact interval $I\subset[0,\tau_{\mathrm c}]$,
\begin{equation}\label{eq:finite-global-rate}
 \sup_{\tau\in I}\left|
 m_n(\tau)-4\left(1-\cos\frac\tau2\right)
 \right|=O_I(n^{-1}).
\end{equation}
\end{theorem}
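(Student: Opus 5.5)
We prove \eqref{eq:finite-variational-limit} by a two-sided comparison between $\Phi_n(G,\tau)$ and $\Phi_\tau(W_G)$, where $W_G$ is the step graphon of $G$, and then bootstrap to the rate \eqref{eq:finite-global-rate} using \cref{thm:global}.

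\textbf{Step 1: exact expansion of $\Phi_n$.} Since $\sum_{v}|(\e^{-\ii tA})_{vu}|^2=1$ by unitarity, we have
\[
 \sum_{u\ne v}|U_{vu}|^2=n-\sum_u|U_{uu}|^2.
\]
Expanding $U_{uu}=\sum_k(-\ii t)^k(A^k)_{uu}/k!$ and collecting the real and imaginary parts, $n-\sum_u|U_{uu}|^2$ becomes a power series in $t$. Its $t^{2r}$ coefficient is a combination of the quantities $\sum_u (A^j)_{uu}(A^{2r-j})_{uu}$. With $t=\tau/n$ and the factor $n^2/(n(n-1))$, only the terms of total degree $2r$ with $\sum_u (A^{2r})_{uu}=n^{2r}\homd(C_{2r},W_G)$ survive at order one. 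For $r=1$ this term is $\Tr A^2=n^2\homd(K_2,W_G)$.

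The cross terms $\sum_u (A^j)_{uu}(A^{2r-j})_{uu}$ with $j\ge1$ carry an extra factor of at most $n^{-1}$ relative to the main term, because $(A^j)_{uu}\le n^{j-1}$ forces $\sum_u(A^j)_{uu}(A^k)_{uu}\le n\cdot n^{j-1}n^{k-1}=n^{j+k-1}$. The prefactor $n/(n-1)=1+O(1/n)$ likewise contributes only $O(1/n)$ relative corrections. Bounding each coefficient by something like $C^{r}\tau^{2r}/(2r)!$ times a polynomial in $r$, the series converges absolutely for every $\tau$. This gives
\[
 |\Phi_n(G,\tau)-\Phi_\tau(W_G)|\le C_J/n
 \qquad\text{uniformly in } G\in\cT_n,\ \tau\in J.
\]
I expect this careful bookkeeping of the cross terms to be the main technical step. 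Getting the sign and combinatorial factors right so that the limit reproduces exactly the alternating coefficients $2(-1)^{r+1}/(2r)!$ in \eqref{eq:Phi-series} is the delicate point.

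\textbf{Step 2: upper bound.} Since $W_G$ is triangle-free as a graphon whenever $G$ is triangle-free, Step 1 gives $m_n(\tau)\le M_\triangle(\tau)+C_J/n$.

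\textbf{Step 3: lower bound.} The functional $\Phi_\tau$ is continuous in cut distance, uniformly for $\tau\in J$: each $\homd(C_{2r},\cdot)$ is $2r$-Lipschitz in $\delta_\square$, and the tail of the series is uniformly small. The function $M_\triangle$ is therefore continuous, and the supremum in \eqref{eq:Mtriangle} can be approached by triangle-free graphons. We also need that every triangle-free graphon is a cut-distance limit of triangle-free graphs $G_n$ on exactly $n$ vertices. Sampling $\mathbb G(n,W)$ almost surely gives a graph whose triangle density tends to $0$ without the graph being triangle-free, so we remove $o(n^2)$ edges using the triangle removal lemma; this perturbs the densities by $o(1)$.

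Uniformity in $\tau$ follows from compactness. Fix a finite $\eta$-net of $J$. For each net point choose a near-optimal graphon; the Lipschitz dependence of $\Phi_\tau$ on $\tau$ (derivatives bounded uniformly over graphons) then transfers near-optimality to all of $J$, giving $m_n\ge M_\triangle-o(1)$ uniformly.

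\textbf{Step 4: rate.} For $I\subset[0,\tau_{\mathrm c}]$ the removal-lemma argument is unnecessary for the lower bound. We simply take $G=K_{\lfloor n/2\rfloor,\lceil n/2\rceil}$, whose step graphon is within $O(1/n)$ of $B_1$ in every even cycle density. Step 1 then gives $m_n\ge\Phi_\tau(B_1)-O(1/n)=4(1-\cos(\tau/2))-O(1/n)$. Combined with Step 2 and \cref{thm:global}, this yields \eqref{eq:finite-global-rate}.
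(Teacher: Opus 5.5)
Your argument is correct. For the rate \eqref{eq:finite-global-rate} it is the paper's argument. The upper bound comes from the exact bridge \eqref{eq:finite-graphon-bridge} and $D_G\ge0$. The lower bound comes from $T_2(n)$, whose nonzero eigenvalues $\pm r_n$ with $r_n=\tfrac12+O(n^{-2})$ give $\Phi_\tau(W_{T_2(n)})=\Phi_\tau(B_1)+O(n^{-2})$.

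Your Step~1 is \cref{lem:defect} together with \eqref{eq:bridge-error}. The paper avoids the bookkeeping you flag as delicate by using the identity $\HSnorm{U-\Id}^2=2\Tr(\Id-\cos(tA))$. That leaves only the diagonal defect $0\le D_G(\tau)\le(\e^{|\tau|}-1-|\tau|)^2/n$ to estimate.

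For \eqref{eq:finite-variational-limit} the routes genuinely differ. The paper argues by contradiction: it passes to $\tau_k\to\tau$, uses graphon compactness to take a cut limit of maximizing graphs for the upper bound, samples from a maximizer $W_\tau$ for the lower bound, and closes with the continuity of $M_\triangle$. Both attainment and continuity come from \cref{cor:continuity-attainment}. You instead get $m_n\le M_\triangle+O_J(n^{-1})$ directly on every compact $J$, which is quantitative and needs no compactness. You make the lower bound uniform by a finite net in $\tau$, combined with Lipschitz bounds for $\Phi_\tau$ in $\tau$ and in $\delta_\square$ that hold uniformly over all graphons. By the counting lemma, the cut-distance Lipschitz constant is at most $2|\tau|\sinh|\tau|$, so you never need maximizers to exist.

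One correction: the removal lemma is superfluous, and your claim that the sampled graph need not be triangle-free is wrong here. Each triple of $\mathbb G(n,W)$ spans a triangle with probability $\homd(K_3,W)=0$, so $\mathbb G(n,W)$ is triangle-free almost surely; this is exactly what the paper uses. Your detour through removing $o(n^2)$ edges remains valid, just heavier than needed.
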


The finite convergence and qualitative graphon stability also imply cut
convergence and edge density convergence for finite near-maximizers; see
\cref{cor:finite-stability}.

\begin{theorem}\label{thm:threshold}
For $0\le\tau\le\tau_{\mathrm c}$,
\[
 \sup\{\Phi_\tau(W):W\text{ is a bipartite graphon}\}=\Phi_\tau(B_1),
\]
and $B_1$ is the unique maximizer, up to weak isomorphism, when
$0<\tau\le\tau_{\mathrm c}$.  If $\tau>\tau_{\mathrm c}$, then the unique
bipartite maximizer, up to weak isomorphism, is $B_{q_\tau}$,
where $q_\tau\in(0,1)$ is the unique solution of
\begin{equation}\label{eq:q-equation}
 \frac{\tau^2}{2}(1-2q_\tau)
 +2\tau\sin\left(\frac{\tau q_\tau}{2}\right)=0.
\end{equation}
\end{theorem}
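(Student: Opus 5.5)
The plan is to rewrite $\Phi_\tau$ spectrally for a bipartite graphon, bound it by a one-variable function of the edge density, and then maximize that function explicitly. Let $T_W$ be the integral operator of $W$ on $L^2[0,1]$, with real eigenvalues $(\lambda_i)$. Then $\homd(C_{2r},W)=\sum_i\lambda_i^{2r}$ for $r\ge2$, and $\sum_i\lambda_i^2=\int W^2$. Put $g(x)=2(1-\cos x)=\sum_{r\ge1}2(-1)^{r+1}x^{2r}/(2r)!$. Comparing with \eqref{eq:Phi-series}, and noting that the $K_2$ term involves $\int W$ rather than $\int W^2$, I will use the identity
\[
 \Phi_\tau(W)=\tau^2\Bigl(\int W-\int W^2\Bigr)+\sum_i g(\tau\lambda_i).
\]
It is justified because $g(x)=O(x^2)$, so the sum converges absolutely and may be rearranged. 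As a check, for $B_q$ the eigenvalues are $\pm q/2$, giving $\Phi_\tau(B_q)=f(q)$ with
\[
 f(q)=\frac{\tau^2}{2}(q-q^2)+4\Bigl(1-\cos\frac{\tau q}{2}\Bigr).
\]
Its derivative $f'(q)$ is exactly the left side of \eqref{eq:q-equation}.

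Now let $W$ be bipartite with parts $X,Y$ of measures $a$ and $1-a$, and let $K\colon L^2(Y)\to L^2(X)$ be the off-diagonal block. The nonzero eigenvalues of $T_W$ are $\pm\sigma_i$, where the $\sigma_i$ are the singular values of $K$, and $\sum_i\sigma_i^2=\tfrac12\int W^2$. Write $e=\homd(K_2,W)/2=\int_{X\times Y}W$ and $h=\int W^2$. Using $g(x)\le x^2$ for all terms with $i\ge2$ gives
\[
 \Phi_\tau(W)\le\tau^2(2e-h)+4(1-\cos\tau\sigma_1)+2\tau^2\Bigl(\frac h2-\sigma_1^2\Bigr)=2\tau^2e+\psi(\sigma_1),
\]
where $\psi(s)=4(1-\cos\tau s)-2\tau^2s^2$. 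Since $\psi'(s)=4\tau(\sin\tau s-\tau s)\le0$, the function $\psi$ is nonincreasing on $[0,\infty)$. Testing $K$ against the normalized indicators gives
\[
 \sigma_1\ge\frac{\langle 1_X,K1_Y\rangle}{\sqrt{a(1-a)}}=\frac{e}{\sqrt{a(1-a)}}\ge 2e .
\]
Hence $\Phi_\tau(W)\le 2\tau^2e+\psi(2e)=f(2e)$, where $q=2e\in[0,1]$ because $e\le a(1-a)\le\tfrac14$. Therefore the bipartite supremum equals $\max_{q\in[0,1]}f(q)$, and it is attained by $B_q$.

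For the equality case, tracking the inequalities shows what a maximizer must satisfy. Every $\sigma_i$ with $i\ge2$ must vanish, since $g(x)<x^2$ for $x\ne0$. We need $\sigma_1=2e$ exactly, or $\psi$ must be constant in between, which fails because $\psi$ is strictly decreasing for $\tau>0$. Equality in the two-step bound on $\sigma_1$ forces $a=1/2$ and $1_X,1_Y$ to be the top singular pair. So $K$ is a constant multiple of $1_X\otimes1_Y$, and $W$ is weakly isomorphic to $B_{2e}$. It remains to analyze $f$ on $[0,1]$. We have $f''(q)=\tau^2(\cos(\tau q/2)-1)\le0$, with equality only at isolated points, so $f$ is strictly concave and has a unique maximizer. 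At $q=1$, $f'(1)=-\tau^2/2+2\tau\sin(\tau/2)$, which is $\ge0$ exactly when $\tau\le4\sin(\tau/2)$, that is, when $\tau\le\tau_{\mathrm c}$. I still need to check that $4\sin(\tau/2)-\tau$ has a single sign change on $(0,\infty)$; this follows from concavity on $(0,2\pi)$ and $4\sin(\tau/2)\le4<2\pi$ beyond. Thus for $0<\tau\le\tau_{\mathrm c}$ the maximizer is $q=1$, so $B_1$ with value $4(1-\cos(\tau/2))$. For $\tau>\tau_{\mathrm c}$, $f'(0)=\tau^2/2>0$ and $f'(1)<0$, so the unique maximizer is the interior root $q_\tau\in(0,1)$ of \eqref{eq:q-equation}.

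I expect the main obstacle to be the rigorous spectral bookkeeping rather than the inequalities. This includes the trace-class-type justification of the eigenvalue expansion via Hilbert–Schmidt summability, and the correct pairing $\pm\sigma_i$ with multiplicities for the bipartite block operator. The equality analysis up to weak isomorphism also needs care, in particular handling graphons whose bipartition is only defined up to null sets.
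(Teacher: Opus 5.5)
Your argument is the paper's proof. It uses the same singular-value reduction of the bipartite block, and your $\psi$ is the paper's $g_\tau$, which is decreasing and nonpositive. It has the same test-function bound $\sigma_1\ge e/\sqrt{a(1-a)}\ge 2e$, the same rank-one equality analysis, and the same strict-concavity study of $f(q)=\Phi_\tau(B_q)$.

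One normalization slip needs fixing. Since $e=\int_{X\times Y}W=\homd(K_2,W)/2$, the quantity $2\tau^2e+\psi(2e)$ equals $f(4e)$, not $f(2e)$. For $B_q$ one has $e=q/4$ and $\sigma_1=q/2$. As written, the claimed bound $\Phi_\tau(W)\le f(2e)$ would give $\Phi_\tau(B_1)\le f(1/2)<f(1)=\Phi_\tau(B_1)$ for $0<\tau\le\tau_{\mathrm c}$, which is false. The correct comparison graphon is $B_{4e}=B_{2\homd(K_2,W)}$, as in the paper. This applies both in the upper bound and in the equality case, where $K\equiv c$ on $X\times Y$ with $a=1/2$ forces $c=4e$. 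The bound $e\le a(1-a)\le 1/4$ is exactly what makes $q=4e$ range over all of $[0,1]$.

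You should also treat $e=0$ separately, as the paper does. In that case $W=0$ almost everywhere, $a(1-a)$ may vanish, and $\Phi_\tau(W)=0$. With these two corrections your argument is complete.
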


For $\tau>\tau_{\mathrm c}$, the graphon $B_{q_\tau}$ is a cut limit of
balanced bipartite graph sequences whose cross-edge densities tend to
$q_\tau$ and which are quasirandom across the two parts
\cite{ChungGrahamWilson1989,LovaszSos2008}.  The theorem does not classify
the exact maximizer at each finite order.

The proof starts from a spectral representation of $\Phi_\tau$.  A
sixth-degree minorant reduces the problem to the edge, $C_4$, and $C_6$
densities.  A four-vertex inequality and a new six-vertex flag algebra
inequality provide the required moment bounds, and coefficient matching
yields both the sharp estimate and stability.  The bipartite problem is
treated separately by a singular value reduction to the one-parameter
family $B_q$.  We use graph limits
\cite{Lovasz2012,LovaszSzegedy2006} and flag algebras
\cite{Razborov2007}; related flag algebra arguments for pentagons in
triangle-free graphs appear in \cite{Grzesik2012,HatamiEtAl2013}.

\Cref{sec:finite-graphs} establishes the finite-to-graphon bridge.
\Cref{sec:dense-limits,sec:extremal-inequalities} develop the spectral and
moment estimates, and \cref{sec:triangle-free-problem} proves the
triangle-free extremal and stability results.  The bipartite problem is
settled in \cref{sec:bipartite-problem}.  Further constructions and open
problems appear in \cref{sec:further-problems}; two elementary consequences
outside the dense scaling are recorded in
\cref{sec:unscaled-regimes}.

\section{From finite transport to the graphon functional}
\label{sec:finite-graphs}

\subsection{Basic identities}

For a finite graph $H$, write $e(H)=|E(H)|$.  Throughout,
$G=(V,E)$ is a finite simple undirected graph with $|V|=n$, adjacency matrix
$A$, degrees $d_u$, and $m=e(G)$.  Put
\[
   U(t)=U_G(t)=\e^{-\ii tA}.
\]
We write $\Tr$ for the trace, $\Id$ for the identity matrix or operator,
and $\opnorm{\cdot}$ for the operator norm on the ambient space.  For a
matrix $M$, let $M^*$ denote its conjugate
transpose; its Hilbert--Schmidt norm is
\[
 \HSnorm{M}^2=\Tr(M^*M)=\sum_{u,v}|M_{uv}|^2.
\]
Since $U_G(t)$ is unitary, the matrix
$U_G(t)\circ\overline{U_G(t)}$ is doubly stochastic, where $\circ$ denotes
entrywise multiplication.  Thus the average transition probability over
all ordered vertex pairs is always $1/n$; compare the average mixing matrix
studied in \cite{Godsil2013}.  We therefore omit the diagonal pairs and set
\[
 S_G(t)=\sum_{u\ne v}|U_G(t)_{vu}|^2,
 \qquad
 F_G(t)=\frac{S_G(t)}{n(n-1)}.
\]

\begin{lemma}\label{lem:basic-identities}
For every finite simple graph $G$,
\begin{align}
 S_G(t)
 &=n-\sum_{u\in V}|U(t)_{uu}|^2, \label{eq:escape}\\
 S_G(t)
 &=2\Tr\bigl(\Id-\cos(tA)\bigr)
   -\sum_{u\in V}|(U(t)-\Id)_{uu}|^2. \label{eq:trace-defect}
\end{align}
In particular, $S_G(t)$ is nonnegative, even, and real analytic on
$\mathbb R$.
\end{lemma}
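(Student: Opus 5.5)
The plan is to use only two facts: $U=U(t)$ is unitary, and it is symmetric because $A$ is real symmetric. For \eqref{eq:escape}, unitarity gives $U^*U=\Id$. Hence every column of $U$ has unit Euclidean norm, so for each $u$ we have $\sum_{v}|U_{vu}|^2=1$. Summing over $u$ gives $\sum_{u,v}|U_{vu}|^2=\HSnorm{U}^2=\Tr(U^*U)=n$. Subtracting the diagonal terms $\sum_u|U_{uu}|^2$ then yields \eqref{eq:escape}.

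For \eqref{eq:trace-defect}, expand $|(U-\Id)_{uu}|^2$ directly. Since $|U_{uu}-1|^2=|U_{uu}|^2-2\operatorname{Re}U_{uu}+1$, summing over $u$ gives
\[
\sum_u|(U-\Id)_{uu}|^2=\sum_u|U_{uu}|^2-2\operatorname{Re}\Tr U+n .
\]
Diagonalize $A$ as $A=\sum_j\lambda_jP_j$ with real eigenvalues $\lambda_j$. Then $\Tr U=\sum_j \e^{-\ii t\lambda_j}$, whose real part is $\sum_j\cos(t\lambda_j)=\Tr\cos(tA)$. Therefore
\[
2\Tr(\Id-\cos(tA))-\sum_u|(U-\Id)_{uu}|^2=2n-2\Tr\cos(tA)-\sum_u|U_{uu}|^2+2\Tr\cos(tA)-n,
\]
which simplifies to $n-\sum_u|U_{uu}|^2$. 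By \eqref{eq:escape} this equals $S_G(t)$.

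The remaining properties follow quickly.
\begin{itemize}
\item \emph{Nonnegativity} is immediate from the definition of $S_G(t)$ as a sum of squared moduli.
\item \emph{Evenness:} $U(-t)=\e^{\ii tA}=\overline{U(t)}$ because $A$ is real. Every modulus $|U_{vu}|$ is therefore unchanged under $t\mapsto -t$.
\item \emph{Real analyticity:} each entry $U(t)_{vu}=\sum_j\e^{-\ii t\lambda_j}(P_j)_{vu}$ is an entire function of $t$. Hence $|U_{vu}|^2=U_{vu}(t)\,\overline{U_{vu}(t)}$ is a finite sum of products of trigonometric polynomials with real coefficients, and so is real analytic in $t$.
\end{itemize}

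No step is a genuine obstacle. The only point needing care is the identity $\operatorname{Re}\Tr U=\Tr\cos(tA)$, which relies on $A$ being real symmetric with real spectrum.
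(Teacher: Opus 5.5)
Your proof is correct and follows essentially the same route as the paper. Equation \eqref{eq:escape} comes from the unit-norm columns of the unitary $U(t)$, and equation \eqref{eq:trace-defect} comes from $\operatorname{Re}\Tr U(t)=\Tr\cos(tA)$. The only difference is bookkeeping: you expand $|U_{uu}-1|^2$ and reduce to \eqref{eq:escape}, while the paper observes that $U-\Id$ and $U$ share off-diagonal entries and computes $\HSnorm{U-\Id}^2=2n-2\operatorname{Re}\Tr U$. Your explicit arguments for evenness (via $U(-t)=\overline{U(t)}$) and for analyticity fill in what the paper only asserts.
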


\begin{proof}
The squared Euclidean norm of every column of $U(t)$ is one.  Summing the
off-diagonal entries by columns gives \eqref{eq:escape}.  For
$K(t)=U(t)-\Id$, the off-diagonal entries of $K(t)$ and $U(t)$ agree, whence
\[
 S_G(t)=\HSnorm{K(t)}^2-\sum_u|K(t)_{uu}|^2.
\]
Since $U(t)$ is unitary,
\[
 \HSnorm{U(t)-\Id}^2
 =\Tr\bigl((U(t)-\Id)^*(U(t)-\Id)\bigr)
 =2n-2\operatorname{Re}\Tr U(t),
\]
which is $2\Tr(\Id-\cos(tA))$.  This proves \eqref{eq:trace-defect}.
The right side of \eqref{eq:escape} is even and real analytic, and it is
nonnegative by definition.
\end{proof}

Equation \eqref{eq:trace-defect} separates a spectral trace term from a
diagonal defect.  The defect vanishes uniformly under the dense scaling
below.

\subsection{The dense bridge}\label{sec:dense-bridge}

A graphon is a symmetric measurable function
$W\colon[0,1]^2\to[0,1]$; unlike a finite simple graph, it may take
fractional values.  Lebesgue measure is denoted by $\mu$, and
$\mathbf1_S$ denotes the indicator of a measurable set $S$.  If $G$ has
vertex set $[n]$, partition $[0,1]$
into intervals $I_1,\ldots,I_n$ of measure $1/n$ and define its step graphon
by
\[
 W_G(x,y)=\mathbf 1_{\{ij\in E(G)\}}
 \quad\text{for }x\in I_i,\ y\in I_j.
\]
For a finite graph $H$, set
\[
 \homd(H,W)=\int_{[0,1]^{V(H)}}
   \prod_{ij\in E(H)}W(x_i,x_j)\prod_{i\in V(H)}dx_i,
 \qquad
 \homd(H,G)=\frac{\hom(H,G)}{n^{|V(H)|}}.
\]
Here $\hom(H,G)$ is the number of graph homomorphisms from $H$ to $G$.
Then $\homd(H,G)=\homd(H,W_G)$.

For an integrable kernel $K$, write
\[
 \lVert K\rVert_\square
 =\sup_{S,T\subseteq[0,1]\text{ measurable}}
   \left|\int_{S\times T}K(x,y)\,dx\,dy\right|.
\]
The cut distance $\delta_\square(U,W)$ is the infimum of
$\lVert U-W^\varphi\rVert_\square$ over measure-preserving bijections
$\varphi$ of $[0,1]$, understood modulo null sets, where
$W^\varphi(x,y)=W(\varphi(x),\varphi(y))$.  Graphons $U$ and $W$ are
\emph{weakly isomorphic} if $\delta_\square(U,W)=0$; equivalently,
$\homd(H,U)=\homd(H,W)$ for every finite simple graph $H$
\cite{Lovasz2012,LovaszSzegedy2006}.  We write $G_n\to W$ when
$\delta_\square(W_{G_n},W)\to0$; equivalently,
$\homd(H,G_n)\to\homd(H,W)$ for every finite graph $H$
\cite{Lovasz2012,LovaszSzegedy2006}.

A graphon is triangle-free if $\homd(K_3,W)=0$.  It is bipartite if there is
a measurable partition $[0,1]=X\sqcup Y$ such that $W=0$ almost everywhere
on $X^2\cup Y^2$.

The graphon $W$ also defines an integral operator on $L^2[0,1]$ by
\[
 (T_Wf)(x)=\int_0^1W(x,y)f(y)\,dy.
\]
Since $W$ belongs to $L^2([0,1]^2)$, the operator $T_W$ is
Hilbert--Schmidt, with
\begin{equation}\label{eq:HS-kernel}
 \HSnorm{T_W}^2
 =\int_{[0,1]^2}W(x,y)^2\,dx\,dy<\infty;
\end{equation}
symmetry of $W$ makes $T_W$ self-adjoint, and every Hilbert--Schmidt
operator is compact; see \cite[Sec.~7.5]{Lovasz2012}.
We write $\mathbf1=\mathbf1_{[0,1]}$ for the constant-one function.
We abbreviate $\int_{[0,1]^2}W(x,y)^2\,dx\,dy$ by $\int W^2$.

We now use the dense scaling $t=\tau/n$.  By \eqref{eq:Phi-n},
\[
 \Phi_n(G,\tau)=\frac{n}{n-1}S_G(\tau/n).
\]

\begin{lemma}
\label{lem:defect}
For every simple graph $G$ on $n$ vertices and every real $\tau$,
\begin{align}
S_G(\tau/n)
={}&2\Tr\left(\Id-\cos\left(\frac{\tau A}{n}\right)\right)-D_G(\tau),
\label{eq:dense-trace}\\
0\le D_G(\tau)
\le{}&\frac{(\e^{|\tau|}-1-|\tau|)^2}{n}.
\label{eq:defect-bound}
\end{align}
\end{lemma}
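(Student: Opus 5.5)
Set $D_G(\tau)=\sum_{u\in V}|(U(\tau/n)-\Id)_{uu}|^2$. Then \eqref{eq:dense-trace} is \eqref{eq:trace-defect} from \cref{lem:basic-identities} evaluated at $t=\tau/n$, and $D_G(\tau)\ge0$ holds trivially. So the only real work is the upper bound in \eqref{eq:defect-bound}. The plan is to bound each diagonal entry of $K=U(\tau/n)-\Id$ by the power series of the exponential, using that diagonal entries of powers of $A$ count closed walks.

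Expand
\[
 K=\sum_{k\ge1}\frac{(-\ii\tau/n)^k}{k!}A^k .
\]
This gives $|K_{uu}|\le\sum_{k\ge1}\frac{|\tau|^k}{k!\,n^k}(A^k)_{uu}$, where $(A^k)_{uu}\ge0$ is the number of closed walks of length $k$ at $u$. The $k=1$ term vanishes because $A$ has zero diagonal. For $k\ge2$, a closed walk of length $k$ at $u$ is determined by its $k-1$ intermediate vertices. These are the first $k-2$ interior vertices, which are arbitrary, and the last one, which must be a neighbour of $u$. Hence $(A^k)_{uu}\le d_u n^{k-2}\le n^{k-1}$. It follows that
\[
 |K_{uu}|\le\frac1n\sum_{k\ge2}\frac{|\tau|^k}{k!}=\frac{\e^{|\tau|}-1-|\tau|}{n}.
\]

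Summing over the $n$ vertices then gives
\[
 D_G(\tau)\le n\cdot\frac{(\e^{|\tau|}-1-|\tau|)^2}{n^2}=\frac{(\e^{|\tau|}-1-|\tau|)^2}{n},
\]
which is exactly \eqref{eq:defect-bound}. The key step is the counting bound $(A^k)_{uu}\le n^{k-1}$, which saves one factor of $n$ relative to the trivial bound $n^k$. Without that saving the estimate would be $O(1)$ rather than $O(1/n)$. The saving comes from the walk being forced to close at $u$ (equivalently, $(A^k)_{uu}\le\opnorm{A}^{k-2}d_u$). The remaining steps are routine.
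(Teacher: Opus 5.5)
Your proof is correct and is the paper's argument: \eqref{eq:dense-trace} is the trace-defect identity evaluated at $t=\tau/n$, and the bound comes from expanding the diagonal entries of $\e^{-\ii\tau A/n}-\Id$, discarding the $k=1$ term since $A_{uu}=0$, and using $(A^k)_{uu}\le n^{k-1}$. Your justification of that closed-walk count, which the paper only asserts, is correct.
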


\begin{proof}
Equation \eqref{eq:trace-defect} gives \eqref{eq:dense-trace} with
\[
 D_G(\tau)=\sum_u
 \left|\left(\e^{-\ii\tau A/n}-\Id\right)_{uu}\right|^2.
\]
Since $A_{uu}=0$ and the number of closed walks of length $k$ rooted at $u$
is at most $n^{k-1}$,
\begin{align*}
 \left|\left(\e^{-\ii\tau A/n}-\Id\right)_{uu}\right|
 &\le\sum_{k\ge2}\frac{|\tau|^k}{k!n^k}(A^k)_{uu}\\
 &\le\frac1n\sum_{k\ge2}\frac{|\tau|^k}{k!}
 =\frac{\e^{|\tau|}-1-|\tau|}{n}.
\end{align*}
Squaring and summing over $u$ gives the result.
\end{proof}

For $n\ge2$, on the subspace of functions that are constant on each $I_i$, the operator
$T_{W_G}$ is represented by $A(G)/n$ and vanishes on the orthogonal
complement.  Since $W_G$ is $\{0,1\}$-valued, the power series in
\eqref{eq:Phi-series} gives
\[
 \Phi_\tau(W_G)
 =2\Tr\left(\Id-\cos\left(\frac{\tau A(G)}n\right)\right).
\]
Consequently, \cref{lem:defect} yields the exact relation
\begin{equation}\label{eq:finite-graphon-bridge}
 \Phi_n(G,\tau)
 =\frac{n}{n-1}\bigl(\Phi_\tau(W_G)-D_G(\tau)\bigr).
\end{equation}
Thus $\Phi_n$ and $\Phi_\tau$ are different at finite order.  Since
$0\le\Phi_\tau(W_G)\le\tau^2$, the exact bridge and
\eqref{eq:defect-bound} give
\begin{equation}\label{eq:bridge-error}
 \left|\Phi_n(G,\tau)-\Phi_\tau(W_G)\right|
 \le
 \frac{\tau^2+(\e^{|\tau|}-1-|\tau|)^2}{n-1},
 \qquad n\ge2.
\end{equation}
In particular, the difference tends to zero locally uniformly in $\tau$,
uniformly over all $n$-vertex graphs $G$.  The separate $K_2$-term in
\eqref{eq:Phi-series} reflects simple graph approximation: a closed walk of
length two traverses one edge twice, whereas the second operator moment of a
fractional graphon is $\int W^2$.

As $\tau\to0$, the first terms of \eqref{eq:Phi-series} are
\begin{equation}\label{eq:first-moments}
 \Phi_\tau(W)
 =\tau^2\homd(K_2,W)
 -\frac{\tau^4}{12}\homd(C_4,W)
 +\frac{\tau^6}{360}\homd(C_6,W)
 +O(\tau^8),
\end{equation}
where the remainder is uniform over all graphons.

\begin{theorem}\label{thm:graphon-limit}
Let $|V(G_n)|=n\to\infty$ and suppose that
$\delta_\square(W_{G_n},W)\to0$ for a graphon $W$.  Then, locally uniformly
for $\tau\in\mathbb R$,
\begin{equation}\label{eq:graphon-functional}
 \Phi_n(G_n,\tau)\longrightarrow\Phi_\tau(W).
\end{equation}
If every $G_n$ is triangle-free, then $W$ is triangle-free.
\end{theorem}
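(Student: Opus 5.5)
The plan is to combine the exact bridge \eqref{eq:bridge-error} with continuity of $\Phi_\tau$ under cut convergence.

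By \eqref{eq:bridge-error}, $|\Phi_n(G_n,\tau)-\Phi_\tau(W_{G_n})|\le(\tau^2+(\e^{|\tau|}-1-|\tau|)^2)/(n-1)$. This tends to zero uniformly for $\tau$ in any compact set, so it suffices to show $\Phi_\tau(W_{G_n})\to\Phi_\tau(W)$ locally uniformly in $\tau$.

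For this we use the series \eqref{eq:Phi-series} termwise. Cut convergence gives $\homd(H,W_{G_n})\to\homd(H,W)$ for every finite graph $H$ \cite{LovaszSzegedy2006}, in particular for $K_2$ and every $C_{2r}$. The tails are controlled uniformly, since $0\le\homd(C_{2r},U)\le1$ for every graphon $U$. Fix $T>0$ and $\varepsilon>0$ and choose $R$ with $2\sum_{r>R}T^{2r}/(2r)!<\varepsilon$. Then for $|\tau|\le T$, the difference $|\Phi_\tau(W_{G_n})-\Phi_\tau(W)|$ is at most
\[
T^2|\Delta_n(K_2)|+2\sum_{r=2}^{R}\frac{T^{2r}}{(2r)!}|\Delta_n(C_{2r})|+2\varepsilon,
\]
where $\Delta_n(H)=\homd(H,W_{G_n})-\homd(H,W)$. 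This is a finite combination of differences tending to zero, plus $2\varepsilon$, which proves \eqref{eq:graphon-functional} locally uniformly.

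For the last assertion, note that if every $G_n$ is triangle-free then $\homd(K_3,G_n)=\hom(K_3,G_n)/n^3=0$, because a homomorphism of $K_3$ into a simple graph is injective and so would give a triangle. Hence $\homd(K_3,W)=\lim_n\homd(K_3,W_{G_n})=0$.

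No step is a real obstacle. The only points needing care are that the bound \eqref{eq:bridge-error} is uniform over graphs, and that the constant bound on cycle densities makes the series tail uniform in both $n$ and $\tau$. An alternative to the termwise argument would be spectral: cut convergence implies convergence of the nonzero eigenvalues of $T_{W_{G_n}}$. The homomorphism-density route avoids needing that fact.
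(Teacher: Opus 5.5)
Your proof is correct and is essentially the paper's argument. You use the same three ingredients: termwise convergence of $\homd(K_2,\cdot)$ and $\homd(C_{2r},\cdot)$ under cut convergence, a tail bound uniform in $n$ and $\tau$ coming from $\homd(C_{2r},\cdot)\le1$, and removal of the defect and the factor $n/(n-1)$. The differences are cosmetic: you do that last step through \eqref{eq:bridge-error} and write the $\varepsilon$-truncation out explicitly, whereas the paper expands $2\Tr(\Id-\cos(\tau A/n))$ directly and cites \cref{lem:defect} together with dominated convergence.
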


\begin{proof}
Write $A=A(G_n)$.  
Expanding the trace in \eqref{eq:dense-trace} gives
\[
 2\Tr\left(\Id-\cos\left(\frac{\tau A}{n}\right)\right)
 =2\sum_{r\ge1}\frac{(-1)^{r+1}\tau^{2r}}{(2r)!}
   \frac{\Tr A^{2r}}{n^{2r}}.
\]
For $r=1$,
\[
 \frac{\Tr A^2}{n^2}=\frac{2e(G_n)}{n^2}
 =\homd(K_2,G_n)\longrightarrow\homd(K_2,W).
\]
For $r\ge2$, $\Tr A^{2r}=\hom(C_{2r},G_n)$, so
\[
 \frac{\Tr A^{2r}}{n^{2r}}=\homd(C_{2r},G_n)
 \longrightarrow\homd(C_{2r},W).
\]
The normalized number of closed walks is at most one.  Fix $T_0>0$.  On
$|\tau|\le T_0$, the
$r$th summand is therefore bounded in absolute value by
$2T_0^{2r}/(2r)!$, a summable sequence independent of $n$.  Together with
\cref{lem:defect} and $n/(n-1)\to1$, dominated convergence proves
\eqref{eq:graphon-functional} locally uniformly in $\tau$.  If every $G_n$ is
triangle-free, then
$\homd(K_3,G_n)=0\to\homd(K_3,W)$, so $W$ is triangle-free.
\end{proof}

\section{Spectral representation and triangle-free stability}
\label{sec:dense-limits}

\begin{proposition}\label{prop:spectral-form}
For every graphon $W$,
\begin{equation}\label{eq:spectral-form}
 \Phi_\tau(W)
 =\tau^2\left(\homd(K_2,W)-\int W^2\right)
 +2\Tr\bigl(\Id-\cos(\tau T_W)\bigr).
\end{equation}
Equivalently, with both sums taken over the nonzero eigenvalues $\lambda_j$
of $T_W$, counted with multiplicity,
\[
 \Phi_\tau(W)
 =\tau^2\left(\homd(K_2,W)-\sum_j\lambda_j^2\right)
 +2\sum_j\bigl(1-\cos(\tau\lambda_j)\bigr).
\]
If $W$ is $\{0,1\}$-valued almost everywhere, the first term in
\eqref{eq:spectral-form} vanishes.
\end{proposition}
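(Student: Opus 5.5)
The plan is to identify each cycle density in \eqref{eq:Phi-series} with a trace of a power of $T_W$, and then resum the series spectrally. Since $T_W$ is a compact self-adjoint Hilbert--Schmidt operator, the spectral theorem gives an orthonormal family of eigenfunctions $f_j$ with nonzero real eigenvalues $\lambda_j$, and $\sum_j\lambda_j^2=\HSnorm{T_W}^2=\int W^2$ by \eqref{eq:HS-kernel}. Moreover $|\lambda_j|\le\opnorm{T_W}\le1$, because $0\le W\le1$.

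The first step is to establish the standard identity $\homd(C_k,W)=\Tr(T_W^k)=\sum_j\lambda_j^k$ for $k\ge3$, and in particular for $k=2r$ with $r\ge2$. To prove it, I would expand $W(x,y)=\sum_j\lambda_jf_j(x)f_j(y)$ in $L^2([0,1]^2)$. The cyclic integral $\int W(x_1,x_2)W(x_2,x_3)\cdots W(x_k,x_1)$ equals $\langle T_W^{k-2}W,W\rangle$ in a suitable sense, which is $\sum_j\lambda_j^{k}$. Equivalently, one writes $T_W^k=T_W^{k-2}\cdot T_W^2$, notes that $T_W^{k}$ is trace class for $k\ge2$, and uses that the trace of a product of two Hilbert--Schmidt operators equals the integral of the product of their kernels. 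Absolute convergence is automatic since $\sum_j|\lambda_j|^k\le\sum_j\lambda_j^2<\infty$. I expect this to be the only step needing care, and I would simply cite \cite[Sec.~7.5]{Lovasz2012} for it.

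Next, I would substitute this identity into \eqref{eq:Phi-series} and exchange the sums. The terms are dominated by $\sum_r\sum_j\tau^{2r}\lambda_j^2/(2r)!\le \cosh(\tau)\int W^2$, so Fubini applies and gives
\[
 \Phi_\tau(W)=\tau^2\homd(K_2,W)+\sum_j\Bigl(2\sum_{r\ge2}\frac{(-1)^{r+1}(\tau\lambda_j)^{2r}}{(2r)!}\Bigr).
\]
The inner sum is $2(1-\cos(\tau\lambda_j))-\tau^2\lambda_j^2$. Summing over $j$ and using $\sum_j\lambda_j^2=\int W^2$ yields the eigenvalue form of the statement. The operator $\Id-\cos(\tau T_W)$ is defined by functional calculus and vanishes on $\ker T_W$. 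It is trace class, since $0\le1-\cos(\tau\lambda)\le\tau^2\lambda^2/2$, and its trace is $\sum_j(1-\cos(\tau\lambda_j))$; this gives \eqref{eq:spectral-form}.

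Finally, if $W\in\{0,1\}$ almost everywhere, then $W^2=W$ almost everywhere, so $\int W^2=\homd(K_2,W)$ and the first term vanishes.
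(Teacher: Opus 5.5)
Your proposal is correct and follows essentially the same route as the paper: identify $\homd(C_{2r},W)$ with $\Tr T_W^{2r}=\sum_j\lambda_j^{2r}$, justify the interchange of the $r$- and $j$-sums using $|\lambda_j|\le1$, and resum to $2(1-\cos(\tau\lambda_j))$, noting that $\Id-\cos(\tau T_W)$ is trace class. The paper runs the same computation in the opposite direction, expanding $2\Tr(\Id-\cos(\tau T_W))$ termwise and comparing with \eqref{eq:Phi-series}, which makes no difference.
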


\begin{proof}
By \eqref{eq:HS-kernel},
\[
 \Tr T_W^2=\HSnorm{T_W}^2=\int W^2.
\]
The operator $T_W^{2r}$ is trace class for every $r\ge1$, and
\[
 \Tr T_W^{2r}=\homd(C_{2r},W),\qquad r\ge2.
\]
The estimate
$0\le1-\cos(\tau x)\le\tau^2x^2/2$ and
$\sum_j\lambda_j^2<\infty$ show that
$\sum_j(1-\cos(\tau\lambda_j))$ converges absolutely.  Thus
$\Id-\cos(\tau T_W)$ is trace class; the trace in
\eqref{eq:spectral-form} is taken of this difference, not of its two terms
separately.  Moreover,
\[
 \sum_j\sum_{r\ge1}\frac{|\tau\lambda_j|^{2r}}{(2r)!}
 \le(\cosh|\tau|-1)\sum_j\lambda_j^2<\infty,
\]
because $|\lambda_j|\le\opnorm{T_W}\le1$.  Hence the trace series may be
summed termwise, and
\[
 2\Tr(\Id-\cos(\tau T_W))
 =\tau^2\int W^2
 +2\sum_{r\ge2}\frac{(-1)^{r+1}\tau^{2r}}{(2r)!}
   \homd(C_{2r},W).
\]
Comparison with \eqref{eq:Phi-series} proves \eqref{eq:spectral-form}.
\end{proof}

The finite precursor of the next lemma is Nosal's spectral form of Mantel's
theorem: a triangle-free graph with $m$ edges has spectral radius at most
$\sqrt m$ \cite{Nosal1970}.  Nikiforov subsequently obtained a sharp
clique-free extension and the corresponding equality characterization
\cite{Nikiforov2002}.  We prove the graphon form directly because its
equality case is needed below.

\begin{lemma}\label{lem:spectral-mantel}
Let $W$ be triangle-free, put $p=\homd(K_2,W)$, and let $\lambda_j$ run
through the nonzero eigenvalues of $T_W$, counted with multiplicity.  Then
\begin{equation}\label{eq:spectral-Mantel}
 p\le\lambda_{\max}(T_W)\le\sqrt{\frac p2}\le\frac12,
 \qquad
 |\lambda_j|\le\lambda_{\max}(T_W)\quad\text{for every }j.
\end{equation}
Moreover, $p=1/2$ if and only if $W$ is weakly isomorphic to $B_1$.
\end{lemma}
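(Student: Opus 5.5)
We prove the chain of inequalities in \eqref{eq:spectral-Mantel} in order, and then the equality characterization.

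\emph{Rayleigh quotient bound and eigenvalue domination.} Put $\lambda=\lambda_{\max}(T_W)$. Testing the Rayleigh quotient with $\mathbf1$ gives
\[
 \lambda\ge\langle T_W\mathbf1,\mathbf1\rangle=p .
\]
For the domination $|\lambda_j|\le\lambda$, we use that $W\ge0$. Since $T_W$ is compact, self-adjoint and positivity preserving, the Perron--Frobenius principle applies: for any unit $f$, $|\langle T_Wf,f\rangle|\le\langle T_W|f|,|f|\rangle\le\lambda$. Hence $\opnorm{T_W}=\lambda$, and every $|\lambda_j|\le\lambda$.

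\emph{The bound $\lambda\le\sqrt{p/2}$.} This is the Nosal step, and we argue through the degree function $d(x)=\int W(x,y)\,dy$. Let $f\ge0$ be a unit eigenfunction for $\lambda$; such an $f$ exists by positivity, since $|f|$ also attains the maximal Rayleigh quotient. Fix $x$ and apply Cauchy--Schwarz:
\[
 \lambda^2 f(x)^2=\Bigl(\int W(x,y)f(y)\,dy\Bigr)^2\le d(x)\int W(x,y)f(y)^2\,dy .
\]
Integrating in $x$ yields
\[
 \lambda^2\le\int f(y)^2\Bigl(\int W(x,y)d(x)\,dx\Bigr)dy .
\]
Triangle-freeness gives $\int\!\!\int W(x,y)W(x,z)W(y,z)=0$. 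Consequently, for a.e.\ $y$, the neighbourhood measures $W(y,\cdot)$ and $W(x,\cdot)$ (weighted by $W(x,y)$) are essentially disjoint. This gives
\[
 \int W(x,y)\,d(x)\,dx\;\le\;\int W(x,y)\int_z W(x,z)\,(1-W(y,z))\,dz\,dx .
\]
A cleaner way to obtain the same conclusion is the standard identity $\sum_{x\sim y}d(x)\le m$. Its graphon analogue is
\[
 \int W(x,y)\,d(x)\,dx\;\le\;p/2\quad\text{for a.e.\ }y,
\]
which follows because $\int W(x,y)W(x,z)\,dx$ must vanish whenever $W(y,z)>0$. I would prove it by approximating with fractional weights; the fractional case needs care, and I expect this to be the main obstacle. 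As a fallback, I would instead use $\lambda^3\le\Tr$-type bounds: from $t(K_3,W)=0$ one gets $\sum\lambda_j^3=0$, so
\[
 \lambda^3\le\sum_{\lambda_j<0}|\lambda_j|^3\le\lambda\sum_{\lambda_j<0}\lambda_j^2 .
\]
Combined with $\lambda^2+\sum_{\lambda_j<0}\lambda_j^2\le\int W^2\le p$, this gives $2\lambda^2\le p$. This fallback is the argument I would actually use, since it avoids the pointwise issue entirely.

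\emph{The bound $\sqrt{p/2}\le\frac12$, and equality.} From $p\le\sqrt{p/2}$ we get $p^2\le p/2$, so $p\le\frac12$, and then $\sqrt{p/2}\le\frac12$. For the equality case, if $p=\frac12$ then every inequality in the fallback chain is tight. Thus $\int W^2=\int W$, so $W$ is $\{0,1\}$-valued, and $\lambda=\frac12=p$. Equality in the Rayleigh bound then forces $\mathbf1$ to be an eigenfunction, so $d\equiv\frac12$. Tightness also forces the spectrum to be exactly $\{\frac12,-\frac12\}$ with the negative eigenvalue simple. Then $T_W$ has rank two, so $W=\frac12(\mathbf1\mathbf1^{\top})-\frac12 gg^{\top}$ with $g\perp\mathbf1$ and $\|g\|=1$; the $\{0,1\}$-valued condition forces $g=\pm1$, with the two sign classes each of measure $\frac12$. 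This is $B_1$ up to measure-preserving rearrangement. The converse direction is a direct computation.
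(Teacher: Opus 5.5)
Your proof is correct. For the chain of inequalities, the fallback you say you would actually use is the paper's argument: the Rayleigh quotient at $\mathbf 1$, domination via $|\langle T_Wf,f\rangle|\le\langle T_W|f|,|f|\rangle$, and then $\sum_j\lambda_j^3=\homd(K_3,W)=0$ to get $2\lambda^2\le\int W^2\le p$. The Nosal-style degree digression is never used and can simply be deleted. You should dispatch $W=0$ a.e.\ first, so that $\lambda\ge p>0$ before you divide by $\lambda$.

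The equality case is where you take a genuinely different route. After obtaining $\int W^2=\int W$, the paper argues combinatorially:
\begin{itemize}
\item neighbourhood disjointness on edges gives $d(x)+d(y)\le1$, hence $2\int d^2\le p=\frac12$, and Cauchy--Schwarz forces $d\equiv\frac12$;
\item it then takes the neighbourhood $Y$ of a typical vertex, and uses triangle-freeness to make $W=0$ on $Y^2$;
\item the degree condition makes $W=1$ on $X\times Y$, and triangle-freeness again makes $W=0$ on $X^2$.
\end{itemize}
You instead squeeze the spectral chain itself:
\begin{itemize}
\item since $\lambda\Id-T_W\succeq0$ by the domination step, $\langle T_W\mathbf1,\mathbf1\rangle=\lambda$ gives $T_W\mathbf1=\frac12\mathbf1$;
\item tightness of $\lambda^2+\sum_{\lambda_j<0}\lambda_j^2\le\sum_j\lambda_j^2$ makes $\frac12$ the only positive eigenvalue, and simple, so its eigenspace is spanned by $\mathbf1$;
\item tightness of $\lambda^3\le\sum_{\lambda_j<0}|\lambda_j|^3\le\lambda\sum_{\lambda_j<0}\lambda_j^2$ forces $|\lambda_j|=\frac12$ for every negative eigenvalue, and $\sum_{\lambda_j<0}\lambda_j^2=\frac14$ makes $-\frac12$ simple;
\item the rank-two decomposition then gives $W(x,y)=\frac12(1-g(x)g(y))$ a.e.\ with $g\perp\mathbf1$;
\item $\{0,1\}$-valuedness yields $|g(x)|\,|g(y)|=1$ a.e., so $|g|=1$ a.e.\ by Fubini (this step deserves a line), and $\int g=0$ balances the two sign classes.
\end{itemize}

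What each approach buys: your argument is shorter and uses triangle-freeness only through $\sum_j\lambda_j^3=0$, so it is a pure spectral rigidity statement. The paper's degree-and-neighbourhood argument is the exact-case template for \cref{lem:quantitative-mantel}, where the partition is built from the neighbourhood of a well-chosen vertex and each equality becomes an explicit deficit. Your rigidity argument does not supply such a partition without an additional spectral perturbation estimate.
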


\begin{proof}
If $W=0$ almost everywhere, then \eqref{eq:spectral-Mantel} is trivial.
Otherwise, let $\rho=\lambda_{\max}(T_W)$.  The constant function has norm
one, so $\rho\ge\langle T_W\mathbf1,\mathbf1\rangle=p>0$.  For every
$f\in L^2[0,1]$,
\[
 |\langle T_Wf,f\rangle|
 \le \langle T_W|f|,|f|\rangle
 \le \rho\lVert f\rVert_2^2.
\]
The variational characterization of the extreme eigenvalues therefore gives
$|\lambda_j|\le\rho$ for every $j$.  Triangle-freeness implies
\[
 \sum_j\lambda_j^3=\Tr T_W^3=\homd(K_3,W)=0.
\]
Consequently,
\[
 \sum_{\lambda_j<0}|\lambda_j|^3
 =\sum_{\lambda_j>0}\lambda_j^3\ge\rho^3.
\]
Because $|\lambda_j|\le\rho$,
\[
 \sum_{\lambda_j<0}\lambda_j^2
 \ge\frac1\rho\sum_{\lambda_j<0}|\lambda_j|^3\ge\rho^2.
\]
The eigenvalue $\rho$ contributes another $\rho^2$, and therefore
\[
 2\rho^2\le\sum_j\lambda_j^2=\int W^2\le\int W=p.
\]
Since $\rho\ge p$, we obtain $2p^2\le2\rho^2\le p$ and hence $p\le1/2$.

It remains to consider equality.  If $p=1/2$, then
\[
 \frac12=2p^2\le2\rho^2
 \le\sum_j\lambda_j^2=\int W^2\le\int W=p=\frac12.
\]
Every inequality is therefore an equality.  In particular,
$\int W^2=\int W$, so $W$ is
$\{0,1\}$-valued almost everywhere.  Put
$d(x)=\int_0^1W(x,y)\,dy$.  For almost every $(x,y)$ with $W(x,y)=1$,
triangle-freeness shows that the sets $\{z:W(x,z)=1\}$ and
$\{z:W(y,z)=1\}$ are disjoint up to null sets; hence $d(x)+d(y)\le1$.
Therefore
\[
 2\int_0^1d(x)^2\,dx
 =\int_{[0,1]^2}W(x,y)(d(x)+d(y))\,dx\,dy
 \le p=\frac12.
\]
Cauchy--Schwarz gives $\int d^2\ge(\int d)^2=p^2=1/4$, so equality holds and
$d=1/2$ almost everywhere.  Since $W$ is $\{0,1\}$-valued almost
everywhere, the section $W(x,\cdot)$ is $\{0,1\}$-valued almost everywhere
for almost every $x$.  Moreover, the integrand defining $\homd(K_3,W)$ is
nonnegative and $W$ is triangle-free, so
\[
 0=\homd(K_3,W)
 =\int_0^1\left(\int_{[0,1]^2}
 W(x,y)W(x,z)W(y,z)\,dy\,dz\right)dx.
\]
The inner integral vanishes for almost every $x$.  We may therefore choose
$x$ such that
\[
 d(x)=\frac12,
 \qquad
 W(x,\cdot)\in\{0,1\}\quad\text{almost everywhere},
 \qquad
 \int W(x,y)W(x,z)W(y,z)\,dy\,dz=0.
\]
Let $Y=\{y:W(x,y)=1\}$ and $X=[0,1]\setminus Y$.  Both sets have measure
$1/2$, and the vanishing section integral gives $W=0$ almost everywhere on
$Y^2$.  For almost every $y\in Y$,
$d(y)=1/2=\mu(X)$, so $W=1$ almost everywhere on $X\times Y$.
Consequently,
\[
 0=\homd(K_3,W)
 \ge \int_{X^2\times Y}W(u,v)W(u,y)W(v,y)\,du\,dv\,dy
 =\mu(Y)\int_{X^2}W(u,v)\,du\,dv.
\]
Thus $W=0$ almost everywhere on $X^2$, and $W$ is weakly isomorphic to $B_1$.
The converse follows directly from the definition of $B_1$.
\end{proof}

The equality case above has the following quantitative form; its explicit
constants will be used in \cref{thm:stability}.

\begin{lemma}\label{lem:quantitative-mantel}
Let $W$ be a triangle-free graphon and write
\[
 p=\homd(K_2,W)=\frac12-\eta,
 \qquad 0\le\eta\le\frac12.
\]
There is a measurable partition $[0,1]=X\sqcup Y$ with the following
properties.  Let $K_{X,Y}$ be one on $X\times Y\cup Y\times X$ and zero
elsewhere, and write $\lVert K\rVert_1=\int_{[0,1]^2}|K|$.  Then
\begin{align}
 \int_{X^2\cup Y^2}W&\le2p\eta,\label{eq:mantel-internal}\\
 \left|\mu(X)-\frac12\right|^2&\le\eta(1-\eta),
 \label{eq:mantel-balance}\\
 \lVert W-K_{X,Y}\rVert_1&\le3\eta.
 \label{eq:mantel-L1}
\end{align}
Consequently,
\begin{equation}\label{eq:mantel-cut-stability}
 \inf_\varphi\lVert W^\varphi-B_1\rVert_1\le2\sqrt\eta,
 \qquad
 \delta_\square(W,B_1)\le2\sqrt\eta,
\end{equation}
where the infimum is over measure-preserving bijections of $[0,1]$.
\end{lemma}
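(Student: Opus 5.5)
The plan is to follow the equality argument in \cref{lem:spectral-mantel}, replacing its pointwise conclusions with averaged ones. Put $d(x)=\int_0^1W(x,y)\,dy$ and $N(x)=\{y:W(x,y)>0\}$. Triangle-freeness gives $W(x,z)W(y,z)=0$ for a.e.\ $z$ whenever $W(x,y)>0$, for a.e.\ pair $(x,y)$. Hence $N(x)\cap N(y)$ is null and $d(x)+d(y)\le\mu(N(x))+\mu(N(y))\le1$, which gives $2\int d^2\le p$. I only need the opposite-direction fact $\int d^2\ge p^2$, which is Cauchy--Schwarz. Averaging over a uniform $x$,
\[
\int_0^1\!\!\int_{N(x)}d(y)\,dy\,dx\;\ge\;\int_{[0,1]^2}W(x,y)d(y)\,dx\,dy=\int d^2\ge p^2 .
\]
Also $\int W(x,y)W(x,z)W(y,z)\,dy\,dz=0$ for a.e.\ $x$. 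So I can fix $x_0$ with $\int_{N(x_0)}d\ge p^2$ for which this section integral vanishes. Set $Y=N(x_0)$ and $X=[0,1]\setminus Y$. Then $W=0$ a.e.\ on $Y^2$.

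\textbf{Internal weight and balance.} Decompose $p=\int_{X^2}W+2\int_{X\times Y}W$. Since $\int_{Y^2}W=0$, we have $\int_{X\times Y}W=\int_Yd\ge p^2$. Therefore $\int_{X^2\cup Y^2}W=p-2\int_Yd\le p-2p^2=2p\eta$, which is \eqref{eq:mantel-internal}. Next, $p\le2\mu(X)\mu(Y)+2p\eta$, so $\mu(X)\mu(Y)\ge p(1-2\eta)/2=p^2$. Writing $\mu(X)\mu(Y)=\tfrac14-(\mu(X)-\tfrac12)^2$ and $p=\tfrac12-\eta$ gives $(\mu(X)-\tfrac12)^2\le\tfrac14-(\tfrac12-\eta)^2=\eta(1-\eta)$, which is \eqref{eq:mantel-balance}.

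\textbf{The $L^1$ bound.} Compute directly:
\[
\lVert W-K_{X,Y}\rVert_1=\int_{X^2\cup Y^2}W+2\int_{X\times Y}(1-W)=2\int_{X^2\cup Y^2}W+2\mu(X)\mu(Y)-p .
\]
Using \eqref{eq:mantel-internal} and $\mu(X)\mu(Y)\le\tfrac14$, this is at most $4p\eta+\tfrac12-p=4p\eta+\eta\le3\eta$, because $p\le\tfrac12$.

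\textbf{Passing to $B_1$.} Let $\delta=|\mu(X)-\tfrac12|$. Choose a measure-preserving bijection carrying a balanced partition $(X',Y')$ to $\bigl([0,\tfrac12],(\tfrac12,1]\bigr)$, where $X'$ differs from $X$ by a set $S$ of measure $\delta$. Then $K_{X,Y}$ and $K_{X',Y'}$ differ exactly on pairs with one coordinate in $S$ and the other outside $S$, so $\lVert K_{X,Y}-K_{X',Y'}\rVert_1=2\delta(1-\delta)$. The triangle inequality gives $3\eta+2\delta(1-\delta)$ with $\delta\le\sqrt{\eta(1-\eta)}$. The cut-distance bound then follows from $\lVert\cdot\rVert_\square\le\lVert\cdot\rVert_1$.

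\textbf{Main obstacle.} The naive sum $3\eta+2\sqrt{\eta(1-\eta)}$ is not obviously at most $2\sqrt\eta$ for small $\eta$: the slack $2\sqrt\eta-2\sqrt{\eta(1-\eta)}\approx\eta^{3/2}$ is smaller than $3\eta$. So the final constant needs care. First I would note that for $\eta\ge\tfrac14$ the bound is trivial, since every $L^1$ distance is at most $1\le2\sqrt\eta$. For small $\eta$ I would avoid the triangle inequality and compute $\lVert W-K_{X',Y'}\rVert_1$ directly, cell by cell. Writing $a=\int_{X^2\cup Y^2}W$ and $c=\int_{X\times Y}W$, the cells $S\times X$ and $S\times Y$ contain weight comparable to $c$. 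This lets the deficit $2\mu(X)\mu(Y)-2c$ be absorbed into the $\delta$-term instead of being added to it. The aim is an estimate of the form $\delta+O(\eta)$ with the $O(\eta)$ term dominated by $(2-\sqrt{1-\eta})\sqrt\eta$. If this sharpening fails, I would optimise the choice of $x_0$, for instance by also demanding $\mu(N(x_0))$ close to $\tfrac12$, to improve the balance estimate.
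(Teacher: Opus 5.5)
Your construction of the partition and your proofs of \eqref{eq:mantel-internal}, \eqref{eq:mantel-balance}, and \eqref{eq:mantel-L1} are correct. They are essentially the paper's argument, with the roles of $X$ and $Y$ swapped. The gap is in the passage to $B_1$. You correctly observe that $3\eta+2\delta(1-\delta)$ with $\delta\le\sqrt{\eta(1-\eta)}$ does not yield $2\sqrt\eta$ for small $\eta$, but you give no working replacement.

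The estimate you aim for, $\delta+O(\eta)$, is false. Take $W=K_{X,Y}$ with $\mu(X)=\frac12+\delta$. Then:
\begin{itemize}
 \item $\eta=2\delta^2$;
 \item every admissible $x_0$ produces the partition $\{X,Y\}$, so $\delta$ is forced;
 \item every balanced complete bipartite kernel is at $L^1$ distance at least $2\delta(1-\delta)=2\delta-\eta$ from $W$.
\end{itemize}
The excess over $\delta$ is therefore of order $\sqrt\eta$, not $\eta$. The same example shows that re-choosing $x_0$ cannot improve the balance either.

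The missing idea is not to discard the balance term in your own $L^1$ identity. You computed
\[
 \lVert W-K_{X,Y}\rVert_1=2a+2\mu(X)\mu(Y)-p.
\]
Keep $2\mu(X)\mu(Y)=\frac12-2\delta^2$ exactly instead of bounding it by $\frac12$. This gives $\lVert W-K_{X,Y}\rVert_1=\eta+2a-2\delta^2$, so the saving from imbalance partly cancels the cost $2\delta(1-\delta)$ of moving $S$. The triangle inequality and $a\le2p\eta=\eta-2\eta^2$ then give
\[
 \inf_\varphi\lVert W^\varphi-B_1\rVert_1\le\eta+2a+2\delta-4\delta^2\le3\eta-4\eta^2+2\delta-4\delta^2 .
\]
Now split into cases:
\begin{itemize}
 \item If $\eta\le\frac1{16}$, then $\delta\le\sqrt{\eta(1-\eta)}<\frac14$. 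Since $2\delta-4\delta^2$ increases on $[0,\frac14]$, the right side is at most $2\sqrt{\eta(1-\eta)}-\eta\le2\sqrt\eta$.
 \item If $\frac1{16}\le\eta\le\frac14$, use $2\delta-4\delta^2\le\frac14$ and the elementary inequality $3\eta-4\eta^2+\frac14\le2\sqrt\eta$ on that range.
 \item Your trivial bound covers $\eta\ge\frac14$.
\end{itemize}
This is exactly how the paper closes the argument.
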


No optimality is claimed for the numerical constant $2$ in
\eqref{eq:mantel-cut-stability}.  The exponent $1/2$, however, is sharp, as
shown in \cref{rem:sqrt-sharpness}.  The proof is deferred to
\cref{app:technical-estimates}.

\subsection{A uniform truncation estimate}

\begin{lemma}\label{lem:cycle-bound}
If $W$ is triangle-free, then for every $r\ge2$,
\[
 0\le\homd(C_{2r},W)\le 2^{1-2r}.
\]
\end{lemma}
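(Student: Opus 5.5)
We prove the bound through the spectral representation of \cref{prop:spectral-form} and the spectral Mantel estimate of \cref{lem:spectral-mantel}.

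\emph{Nonnegativity.} For $r\ge2$ we have $\homd(C_{2r},W)=\Tr T_W^{2r}=\sum_j\lambda_j^{2r}$. This sum is nonnegative because each term is an even power of a real eigenvalue. Alternatively, nonnegativity follows directly from the integral definition, since the integrand is nonnegative.

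\emph{Upper bound.} Let $\rho=\lambda_{\max}(T_W)$. By \cref{lem:spectral-mantel}, every nonzero eigenvalue satisfies $|\lambda_j|\le\rho\le\sqrt{p/2}$, where $p=\homd(K_2,W)$. Since $2r-2\ge2$, we can estimate
\[
 \sum_j\lambda_j^{2r}\le\rho^{2r-2}\sum_j\lambda_j^2
 =\rho^{2r-2}\int W^2\le\rho^{2r-2}\,p.
\]
Substituting $\rho^2\le p/2$ gives
\[
 \homd(C_{2r},W)\le\left(\frac p2\right)^{r-1}p=2^{1-r}p^r.
\]
Finally, $p\le1/2$ gives $p^r\le2^{-r}$, and therefore $\homd(C_{2r},W)\le2^{1-r}\cdot2^{-r}=2^{1-2r}$, which is the claimed bound.

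There is no real obstacle. The one point to check is that the trace identity $\Tr T_W^{2r}=\homd(C_{2r},W)$ holds for $r\ge2$, which was already recorded in the proof of \cref{prop:spectral-form}. As a sanity check, $B_1$ attains the bound: its eigenvalues are $\pm1/2$, so $\homd(C_{2r},B_1)=2\cdot2^{-2r}$.
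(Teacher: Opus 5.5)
Your proof is correct and is essentially the paper's argument: both bound $\sum_j\lambda_j^{2r}$ by $\rho^{2r-2}\sum_j\lambda_j^2$, using \cref{lem:spectral-mantel} together with $\int W^2\le p\le 1/2$. The only difference is that you pass through the slightly sharper intermediate bound $2^{1-r}p^r$ via $\rho\le\sqrt{p/2}$, whereas the paper uses $|\lambda_j|\le 1/2$ directly.
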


\begin{proof}
Let $\lambda_j$ run through the nonzero eigenvalues of $T_W$, counted with
multiplicity.  By \cref{lem:spectral-mantel}, $|\lambda_j|\le1/2$ and
\[
 \sum_j\lambda_j^2=\int W^2\le\int W=\homd(K_2,W)\le\frac12.
\]
Hence
\[
 \homd(C_{2r},W)=\sum_j\lambda_j^{2r}
 \le\left(\frac12\right)^{2r-2}\sum_j\lambda_j^2
 \le2^{1-2r}.
\]
\end{proof}

For an integer $\ell\ge2$, define the truncated transport
\begin{equation}\label{eq:truncation}
 \Phi_\tau^{[\ell]}(W)
 =\tau^2\homd(K_2,W)
 +2\sum_{r=2}^{\ell}\frac{(-1)^{r+1}\tau^{2r}}{(2r)!}
 \homd(C_{2r},W).
\end{equation}

\begin{proposition}\label{prop:tail}
For every triangle-free graphon $W$, every integer $\ell\ge2$, and every
real $\tau$,
\begin{equation}\label{eq:tail-bound}
 \left|\Phi_\tau(W)-\Phi_\tau^{[\ell]}(W)\right|
 \le4\sum_{r=\ell+1}^{\infty}\frac{(|\tau|/2)^{2r}}{(2r)!}.
\end{equation}
\end{proposition}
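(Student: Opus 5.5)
The difference $\Phi_\tau(W)-\Phi_\tau^{[\ell]}(W)$ is exactly the tail of the series \eqref{eq:Phi-series}. By \eqref{eq:truncation} it equals
\[
 2\sum_{r=\ell+1}^{\infty}\frac{(-1)^{r+1}\tau^{2r}}{(2r)!}\homd(C_{2r},W).
\]
The plan is therefore to bound this tail termwise in absolute value, using the uniform cycle estimate of \cref{lem:cycle-bound}.

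First, I note that this rearrangement is legitimate. The series \eqref{eq:Phi-series} converges absolutely for every real $\tau$, because $0\le\homd(C_{2r},W)\le1$ for every graphon, and $\sum_r\tau^{2r}/(2r)!<\infty$. Hence subtracting the finite partial sum $\Phi_\tau^{[\ell]}(W)$ leaves precisely the tail over $r\ge\ell+1$. Since $\ell\ge2$, every index in the tail satisfies $r\ge3\ge2$, so \cref{lem:cycle-bound} applies to each term.

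Second, the triangle inequality and \cref{lem:cycle-bound} give
\[
 \left|\Phi_\tau(W)-\Phi_\tau^{[\ell]}(W)\right|
 \le 2\sum_{r=\ell+1}^{\infty}\frac{|\tau|^{2r}}{(2r)!}\,2^{1-2r}
 =4\sum_{r=\ell+1}^{\infty}\frac{(|\tau|/2)^{2r}}{(2r)!},
\]
since $2\cdot2^{1-2r}|\tau|^{2r}=4(|\tau|/2)^{2r}$. This is exactly \eqref{eq:tail-bound}.

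There is no real obstacle here. The only point requiring care is that the bound $\homd(C_{2r},W)\le2^{1-2r}$ uses triangle-freeness through \cref{lem:spectral-mantel}; this is already packaged in \cref{lem:cycle-bound}. Beyond that, the argument only needs the absolute convergence used to justify splitting off the tail.
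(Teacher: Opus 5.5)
Your proposal is correct and is essentially the paper's proof: the paper likewise bounds the tail termwise in absolute value using \cref{lem:cycle-bound} and the identity $2\cdot 2^{1-2r}|\tau|^{2r}=4(|\tau|/2)^{2r}$. Your added remark on absolute convergence justifying the tail decomposition is a harmless extra detail that the paper leaves implicit.
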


\begin{proof}
By \cref{lem:cycle-bound},
\begin{align*}
 \left|\Phi_\tau(W)-\Phi_\tau^{[\ell]}(W)\right|
 &\le2\sum_{r=\ell+1}^{\infty}
   \frac{|\tau|^{2r}}{(2r)!}\homd(C_{2r},W)\\
 &\le4\sum_{r=\ell+1}^{\infty}
   \frac{(|\tau|/2)^{2r}}{(2r)!}.
\end{align*}
\end{proof}

\begin{corollary}\label{cor:continuity-attainment}
Suppose that $\tau_n\to\tau$ and that triangle-free graphons $W_n$ converge
to a triangle-free graphon $W$ in cut distance.  Then
\[
 \Phi_{\tau_n}(W_n)\longrightarrow\Phi_\tau(W).
\]
Consequently, $M_\triangle(\tau)$ is attained for every $\tau\in\mathbb R$,
and $M_\triangle$ is continuous on $\mathbb R$.
\end{corollary}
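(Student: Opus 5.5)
The plan is to reduce the claim to continuity of finitely many homomorphism densities, using the uniform tail bound of \cref{prop:tail}, and then to get attainment and continuity of $M_\triangle$ from compactness of the graphon space. Fix $\varepsilon>0$ and a bound $T_0\ge\sup_n|\tau_n|$. By \cref{prop:tail}, the tail beyond order $\ell$ is at most $4\sum_{r>\ell}(T_0/2)^{2r}/(2r)!$ for every triangle-free graphon and every $|\tau|\le T_0$. Choose $\ell$ so that this is below $\varepsilon$. It then suffices to show $\Phi^{[\ell]}_{\tau_n}(W_n)\to\Phi^{[\ell]}_\tau(W)$. That truncation is a finite sum of polynomials in $\tau$ multiplied by $\homd(K_2,\cdot)$ and $\homd(C_{2r},\cdot)$ for $2\le r\le\ell$. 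These densities converge under cut-distance convergence by the counting lemma, which is the equivalence recalled in \cref{sec:dense-bridge}, and they are bounded by $1$. Together with $\tau_n\to\tau$ this gives convergence of the truncations, hence $\limsup|\Phi_{\tau_n}(W_n)-\Phi_\tau(W)|\le2\varepsilon$.

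For attainment, fix $\tau$ and take triangle-free graphons $W_n$ with $\Phi_\tau(W_n)\to M_\triangle(\tau)$; the supremum is finite since $|\Phi_\tau|\le\tau^2+4\cosh(|\tau|/2)$ by the same bounds. By compactness of the graphon space under $\delta_\square$ (Lov\'asz--Szegedy), a subsequence converges, after measure-preserving relabelings, to some graphon $W$. Relabeling changes neither $\Phi_\tau$ nor triangle-freeness. Since $\homd(K_3,\cdot)$ is cut-continuous, $W$ is triangle-free, and the first part gives $\Phi_\tau(W)=M_\triangle(\tau)$. The first part applies to cut-distance convergence because $\Phi_\tau$ depends only on homomorphism densities, so it is invariant under weak isomorphism.

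For continuity, let $\tau_n\to\tau$. Lower semicontinuity is immediate: if $W^*$ attains $M_\triangle(\tau)$, then $M_\triangle(\tau_n)\ge\Phi_{\tau_n}(W^*)\to M_\triangle(\tau)$. For upper semicontinuity, pick maximizers $W_n$ for $\tau_n$ and pass to a subsequence realizing the $\limsup$. By compactness, a further subsequence converges to a triangle-free $W$, and the first part gives $\limsup M_\triangle(\tau_n)=\lim\Phi_{\tau_n}(W_n)=\Phi_\tau(W)\le M_\triangle(\tau)$.

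The only delicate point is the uniformity of the truncation in both $n$ and $\tau$. \cref{prop:tail} is what provides it: the bound depends only on $|\tau|$ and uses triangle-freeness through \cref{lem:cycle-bound}. Everything else is standard graph-limit machinery.
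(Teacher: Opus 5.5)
Your proposal is correct and follows essentially the same route as the paper: uniform truncation via \cref{prop:tail} together with cut-continuity of finitely many densities gives the first assertion, and compactness of the triangle-free graphon space gives attainment, with upper semicontinuity from limits of maximizers at $\tau_n$ and lower semicontinuity from a fixed maximizer at $\tau$. Your version only spells out the $\varepsilon$-bookkeeping and the weak-isomorphism invariance of $\Phi_\tau$ more explicitly than the paper's condensed argument.
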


\begin{proof}
Every finite homomorphism density is continuous in the cut metric
\cite[Chap.~10]{Lovasz2012} and \cite{LovaszSzegedy2006}.  The finite truncations in
\eqref{eq:truncation} are therefore jointly continuous in $\tau$ and $W$.
On each compact $\tau$-interval, \cref{prop:tail} makes the convergence of
these truncations uniform over all triangle-free graphons.  This proves the
first assertion.

Because $\homd(K_3,\cdot)$ is cut-continuous, the triangle-free graphon space
modulo weak isomorphism is a closed subspace of the compact graphon space.
The first assertion therefore gives a
maximizer for each $\tau$.  If $\tau_n\to\tau$, compactness applied to
maximizers at $\tau_n$ gives the upper semicontinuity of $M_\triangle$;
evaluating at a maximizer for $\tau$ gives the reverse inequality.  Hence
$M_\triangle$ is continuous.
\end{proof}

\section{Triangle-free inequalities and a spectral criterion}
\label{sec:extremal-inequalities}

\subsection{The candidate graphons}

Recall the graphons $B_q$ defined in the introduction.  For $0<q<1$,
$B_q$ is not the
step graphon of a finite simple graph, but it is a cut limit of finite
balanced bipartite graphs.  The step graphons of the balanced complete
bipartite graphs converge to $B_1$ in cut distance.

\begin{proposition}\label{prop:bq}
For $0\le q\le1$,
\begin{equation}\label{eq:bq}
 \Phi_\tau(B_q)
 =\frac{\tau^2}{2}q(1-q)
 +4\left(1-\cos\frac{\tau q}{2}\right).
\end{equation}
In particular,
\[
 \Phi_\tau(B_1)=4\left(1-\cos\frac\tau2\right).
\]
\end{proposition}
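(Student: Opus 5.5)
The plan is to compute $\Phi_\tau(B_q)$ directly from the spectral form in \cref{prop:spectral-form}, which reduces everything to the spectrum of $T_{B_q}$ and two elementary integrals.

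First, the integrals. Let $[0,1]=X\sqcup Y$ with $\mu(X)=\mu(Y)=1/2$, so that $B_q=q\mathbf 1_{X\times Y\cup Y\times X}$. The cross region has measure $2\cdot\frac14=\frac12$. Hence
\[
 \homd(K_2,B_q)=\int B_q=\frac q2,
 \qquad
 \int B_q^2=\frac{q^2}{2}.
\]

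Second, the spectrum. The operator $T_{B_q}$ has rank at most two; its range is spanned by $\mathbf 1_X$ and $\mathbf 1_Y$. Direct computation gives
\[
 T_{B_q}\mathbf 1_X=\tfrac q2\mathbf 1_Y,
 \qquad
 T_{B_q}\mathbf 1_Y=\tfrac q2\mathbf 1_X.
\]
So on this two-dimensional subspace $T_{B_q}$ acts by the matrix $\frac q2\begin{pmatrix}0&1\\1&0\end{pmatrix}$, and it vanishes on the orthogonal complement. The nonzero eigenvalues are therefore $\pm q/2$ for $q>0$. As a consistency check, $\sum_j\lambda_j^2=q^2/2=\int B_q^2$.

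Third, substitute into the eigenvalue form of \cref{prop:spectral-form}:
\[
 \Phi_\tau(B_q)=\tau^2\Bigl(\frac q2-\frac{q^2}2\Bigr)+2\cdot2\Bigl(1-\cos\frac{\tau q}{2}\Bigr),
\]
which is \eqref{eq:bq}. The case $q=0$ is trivial, since both sides vanish. Setting $q=1$ kills the quadratic term and gives $\Phi_\tau(B_1)=4(1-\cos(\tau/2))$.

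There is no real obstacle here. The only points needing care are the measure of the cross region, which gives the factor $1/2$, and confirming that the eigenvalues of $T_{B_q}$ are $\pm q/2$ rather than $\pm q$. One could instead sum the cycle series using $\homd(C_{2r},B_q)=2(q/2)^{2r}$ and recognize the cosine series; this gives the same result.
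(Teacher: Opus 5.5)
Your proposal is correct and follows essentially the same route as the paper: compute $\homd(K_2,B_q)=q/2$ and $\int B_q^2=q^2/2$, identify the nonzero eigenvalues of $T_{B_q}$ as $\pm q/2$, and substitute into the spectral form of \cref{prop:spectral-form}. Your explicit computation of $T_{B_q}$ on $\mathbf 1_X,\mathbf 1_Y$ and the cycle-series cross-check simply fill in details the paper leaves implicit.
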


\begin{proof}
We have $\homd(K_2,B_q)=q/2$ and $\int B_q^2=q^2/2$.  The only nonzero
eigenvalues of $T_{B_q}$ are $q/2$ and $-q/2$.  Substitution in
\eqref{eq:spectral-form} gives \eqref{eq:bq}.
\end{proof}

\begin{lemma}\label{lem:critical-sign}
The function $\psi(\tau)=4\sin(\tau/2)-\tau$ has exactly one positive zero,
namely $\tau_{\mathrm c}$.  It is positive on
$(0,\tau_{\mathrm c})$ and negative on $(\tau_{\mathrm c},\infty)$.
\end{lemma}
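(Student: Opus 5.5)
The plan is to exploit concavity of $\psi(\tau)=4\sin(\tau/2)-\tau$ on $[0,2\pi]$ and its explicit sign beyond $2\pi$.

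First I would record the boundary data and the curvature. We have $\psi(0)=0$ and $\psi'(\tau)=2\cos(\tau/2)-1$, so $\psi'(0)=1>0$. Also
\[
 \psi''(\tau)=-\sin(\tau/2)<0 \qquad \text{for } 0<\tau<2\pi,
\]
so $\psi$ is strictly concave on $[0,2\pi]$.

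Next, a strictly concave function vanishing at $0$ has at most one further zero in $(0,2\pi]$. Indeed, if $0<a<b$ were two zeros, strict concavity applied to the points $0,a,b$ would force $\psi(a)>0$, a contradiction.

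Existence follows from the intermediate value theorem. We have $\psi(\pi)=4-\pi>0$ and $\psi(2\pi)=-2\pi<0$, so there is exactly one zero in $(0,2\pi)$, lying in $(\pi,2\pi)$. This is the zero defining $\tau_{\mathrm c}$ in \eqref{eq:critical-equation}.

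For the signs on $(0,2\pi]$, concavity together with $\psi(0)=\psi(\tau_{\mathrm c})=0$ gives $\psi>0$ on $(0,\tau_{\mathrm c})$. Concavity beyond the second zero gives $\psi<0$ on $(\tau_{\mathrm c},2\pi]$. Equivalently, $\psi$ cannot return to $0$ before $2\pi$, because there is no further zero there.

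Finally, for $\tau>2\pi$ we have the crude bound $\psi(\tau)\le 4-\tau<4-2\pi<0$. So there are no zeros there and $\psi$ is negative.

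The only mildly delicate point is this last step: the concavity argument does not cover the region past $2\pi$, and the separate bound $\psi\le 4-\tau$ is what handles it. Everything else is elementary single-variable calculus.
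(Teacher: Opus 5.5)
Your proof is correct and takes essentially the paper's route: elementary calculus on $[0,2\pi]$ gives a unique zero there with the stated signs, and the bound $\psi(\tau)\le 4-\tau<0$ handles $\tau>2\pi$. The only difference is cosmetic: you use strict concavity ($\psi''(\tau)=-\sin(\tau/2)<0$ on $(0,2\pi)$), whereas the paper uses the sign change of $\psi'(\tau)=2\cos(\tau/2)-1$ at $2\pi/3$, so that $\psi$ first increases and then decreases; both lead to the same conclusion.
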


\begin{proof}
Since $\psi'(\tau)=2\cos(\tau/2)-1$, the function increases on
$(0,2\pi/3)$ and decreases on $(2\pi/3,2\pi)$.  It is positive for small
positive $\tau$, while $\psi(2\pi)=-2\pi$, so it has exactly one positive
zero in $(0,2\pi)$.  Finally, $\psi(\tau)\le4-\tau<0$ for $\tau\ge2\pi$.
\end{proof}

\subsection{The spectral minorant and coefficient comparison}

We now return to the scalar function in the spectral representation.  The
polynomial below is the unique cubic in $y=x^2$ that agrees with
$L(\tau\sqrt y)$ to first order at both endpoints of $[0,1/4]$.  This choice
preserves equality at the spectrum of $B_1$ while retaining only the fourth
and sixth moments.

Put
\begin{equation}\label{eq:hermite-coefficients}
 L(z)=z^2-2(1-\cos z),\qquad
 \begin{aligned}
 a_\tau&=48L(\tau/2)-4\tau L'(\tau/2),\\
 b_\tau&=16\tau L'(\tau/2)-128L(\tau/2).
 \end{aligned}
\end{equation}

\begin{lemma}\label{lem:hermite}
If \(0<\tau\le4\) and \(|x|\le1/2\), then
\begin{equation}\label{eq:hermite-bound}
 L(\tau x)\ge a_\tau x^4+b_\tau x^6.
\end{equation}
Equality holds if and only if \(x=0\) or \(|x|=1/2\).
\end{lemma}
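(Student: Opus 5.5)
The plan is to pass to the variable $y=x^2$ and treat \eqref{eq:hermite-bound} as a Hermite interpolation error estimate, then control the zeros of the error by Rolle's theorem applied to a high derivative.

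\emph{Interpolation conditions.} For $y\in[0,1/4]$ put $g_\tau(y)=L(\tau\sqrt y)-a_\tau y^2-b_\tau y^3$. Since $L(z)=z^4/12-z^6/360+\cdots$, the function $L(\tau\sqrt y)$ is analytic in $y$ and vanishes to second order at $y=0$; hence $g_\tau(0)=g_\tau'(0)=0$. A direct substitution of \eqref{eq:hermite-coefficients} gives
\[
 \frac{a_\tau}{16}+\frac{b_\tau}{64}=L(\tau/2),
 \qquad
 \frac{a_\tau}{2}+\frac{3b_\tau}{16}=\tau L'(\tau/2).
\]
Because $\frac{d}{dy}L(\tau\sqrt y)=\tau L'(\tau/2)$ at $y=1/4$, this yields $g_\tau(1/4)=g_\tau'(1/4)=0$. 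Therefore equality holds at $x=0$ and at $|x|=1/2$.

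\emph{No further zeros.} I will work in $z=\tau x$ on $[-z_0,z_0]$, where $z_0=\tau/2\le2$, with the even function
\[
 h(z)=L(z)-\alpha z^4-\beta z^6,\qquad \alpha=a_\tau/\tau^4,\quad \beta=b_\tau/\tau^6 .
\]
By the interpolation conditions, $h$ has a zero of order at least $4$ at $z=0$ and of order at least $2$ at $\pm z_0$, so at least $8$ zeros counted with multiplicity. Suppose $h$ had a zero $z_1\in(0,z_0)$. Then $-z_1$ is also a zero, and the count rises to at least $10$ on $[-z_0,z_0]$. The same count holds if any of the endpoint multiplicities is larger than stated. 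By Rolle's theorem, $h^{(6)}$ would then have at least $4$ zeros in $(-z_0,z_0)$. However, $L^{(6)}(z)=-2\cos z$, so $h^{(6)}(z)=-2\cos z-720\beta$. Since $\cos$ is even and strictly monotone on $(0,\pi)\supset(0,z_0]$, this function has at most two zeros in $(-z_0,z_0)$. This contradiction shows that $h$ does not vanish on $(0,z_0)$, so $g_\tau$ has constant sign on $(0,1/4)$. The argument also rules out $h\equiv0$, because $h^{(6)}$ is not identically zero. The bound $\tau\le4$ is used exactly here, to keep $z_0\le2<\pi$.

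\emph{Sign.} This is the step I expect to need care. Consider $G(\tau,y)=g_\tau(y)/y^2$, which is continuous and nonvanishing on the connected set $(0,4]\times(0,1/4)$; hence its sign is constant there, and it suffices to determine it as $\tau\to0^+$. Expanding $L$, the contributions of $z^4$ and $z^6$ are reproduced exactly by the interpolant, since $y^2$ and $y^3$ lie in the interpolation space. The leading discrepancy therefore comes from the term $2z^8/8!=\tau^8y^4/20160$. Its Hermite error relative to the double nodes $0$ and $1/4$ equals
\[
 \frac{\tau^8}{20160}\,y^2\Bigl(y-\frac14\Bigr)^2>0,
\]
and the higher-order terms are $O(\tau^{10})$ uniformly. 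Hence $g_\tau>0$ on $(0,1/4)$ for small $\tau$, and therefore for all $\tau\in(0,4]$. This proves \eqref{eq:hermite-bound} with strict inequality for $0<|x|<1/2$, which is exactly the equality statement.
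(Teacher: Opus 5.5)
Your proof is correct, and it takes a genuinely different route from the paper's.

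The paper also passes to $y=x^2$ and uses the same four interpolation conditions. It then applies the Hermite remainder formula
$\Lambda_\tau(y)-P_\tau(y)=\Lambda_\tau^{(4)}(\xi)\,y^2(y-1/4)^2/4!$
and proves $\Lambda_\tau^{(4)}>0$ on $[0,1/4]$ by an alternating-series estimate that uses $\tau^2y\le4$. Sign and strictness therefore come out in one step, for each fixed $\tau$, with no variation in $\tau$.

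You instead count zeros in the variable $z=\tau x$. Since $h^{(6)}(z)=-2\cos z-720\beta$ and $\cos$ is monotone on $[0,z_0]$ for $z_0<\pi$, Rolle's theorem rules out interior zeros of $h$. The sign is then fixed by connectedness of $(0,4]\times(0,1/4)$ together with the $\tau^8$ term of the small-$\tau$ expansion.

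Two points deserve to be said explicitly.
\begin{itemize}
\item The bound on zeros of $h^{(6)}$ must be counted with multiplicity. If $-360\beta=1$, the zero at $0$ is double, since $h^{(7)}(0)=0$ and $h^{(8)}(0)=2$. Otherwise the zeros $\pm w$ are simple, because $h^{(7)}(\pm w)=\pm2\sin w\ne0$. Either way there are at most two, so the contradiction with the Rolle count stands.
\item In the sign step, an absolute $O(\tau^{10})$ bound does not by itself control $g_\tau$ near the nodes $0$ and $1/4$. You only need positivity at one interior point, such as $y=1/8$, for some small $\tau$. The constant-sign conclusion of your Rolle step and connectedness do the rest.
\end{itemize}

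As for what each route buys: the paper's argument is direct and pointwise in $\tau$. Yours replaces the series estimate for $\Lambda_\tau^{(4)}$ with the transparent monotonicity of $\cos$. With no extra work it proves the inequality, with the same equality cases, for all $0<\tau<2\pi$. The price is a nonconstructive continuity argument to determine the sign.
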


\begin{proof}
Let \(\Lambda_\tau(y)=L(\tau\sqrt y)\) for \(0\le y\le1/4\), and let
\(P_\tau(y)=a_\tau y^2+b_\tau y^3\).  The definitions in
\eqref{eq:hermite-coefficients} give
\[
 P_\tau(0)=\Lambda_\tau(0),\quad
 P_\tau'(0)=\Lambda_\tau'(0),\quad
 P_\tau(1/4)=\Lambda_\tau(1/4),\quad
 P_\tau'(1/4)=\Lambda_\tau'(1/4).
\]
Moreover,
\[
 \Lambda_\tau^{(4)}(y)
 =2\sum_{r=4}^{\infty}(-1)^r
 \frac{r(r-1)(r-2)(r-3)\tau^{2r}y^{r-4}}{(2r)!}.
\]
At \(y=0\), only the positive term with \(r=4\) remains.  If
\(0<y\le1/4\), then \(\tau^2y\le4\), and the ratio of successive absolute
terms is at most
\[
 4\frac{r+1}{r-3}\frac1{(2r+2)(2r+1)}
 \le\frac29,\qquad r\ge4.
\]
The alternating series therefore gives \(\Lambda_\tau^{(4)}(y)>0\).
The Hermite remainder formula now yields, for \(0<y<1/4\),
\[
 \Lambda_\tau(y)-P_\tau(y)
 =\frac{\Lambda_\tau^{(4)}(\xi)}{4!}\,y^2(y-1/4)^2>0
\]
for some \(\xi\in(0,1/4)\).  This proves the assertion after setting
\(y=x^2\).
\end{proof}

\begin{lemma}\label{lem:coefficient-signs}
For \(0<\tau\le\tau_{\mathrm c}\), set
\begin{equation}\label{eq:coefficient-definitions}
 \alpha_\tau=-\frac{75}{38}b_\tau,\qquad
 \beta_\tau=a_\tau+\frac{357}{152}b_\tau.
\end{equation}
Then
\[
 \tau^2-a_\tau-\frac38b_\tau=\gamma_\tau,
\]
where $\gamma_\tau$ is defined in \eqref{eq:gamma-introduction}, and
\(\alpha_\tau>0\) and \(\beta_\tau>0\).  Moreover,
\(\gamma_\tau>0\) for \(0<\tau<\tau_{\mathrm c}\), while
\(\gamma_{\tau_{\mathrm c}}=0\).
\end{lemma}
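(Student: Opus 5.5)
The plan is to make the coefficients explicit in terms of $s=\tau/2\in(0,\tau_{\mathrm c}/2]$, with $\tau_{\mathrm c}/2\approx1.8955<\pi$. Since $L(s)=s^2-2+2\cos s$ and $L'(s)=2s-2\sin s$, we have $\tau L'(s)=4s^2-4s\sin s$. Hence
\[
 a_\tau=32s^2-96+96\cos s+16s\sin s,\qquad
 b_\tau=-64\bigl(s^2+s\sin s-4+4\cos s\bigr).
\]
The identity is then a direct substitution:
\[
 \tau^2-a_\tau-\tfrac38b_\tau
 =4s^2-32s^2+96-96\cos s-16s\sin s+24s^2+24s\sin s-96+96\cos s
 =8s\sin s-4s^2 .
\]
This equals $\tau(4\sin(\tau/2)-\tau)=\gamma_\tau$. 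Since $\gamma_\tau=\tau\psi(\tau)$ with $\tau>0$, the sign statements for $\gamma_\tau$ follow at once from \cref{lem:critical-sign}.

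For $\alpha_\tau>0$ it suffices to show $b_\tau<0$, that is, $g(s)=s^2+s\sin s-4+4\cos s>0$ on $(0,\tau_{\mathrm c}/2]$. I will differentiate repeatedly:
\[
 g'(s)=2s+s\cos s-3\sin s,\qquad g''(s)=2-2\cos s-s\sin s,\qquad g'''(s)=\sin s-s\cos s .
\]
Here $g'''>0$ on $(0,\pi)$, because $\tan s>s$ on $(0,\pi/2)$ and $\cos s\le0$ on $[\pi/2,\pi)$. Since $g(0)=g'(0)=g''(0)=0$, three integrations give $g>0$ on $(0,\pi)$, which contains $(0,\tau_{\mathrm c}/2]$. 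The Taylor expansion $g(s)=s^6/360+O(s^8)$ is a useful consistency check.

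The main obstacle is $\beta_\tau=a_\tau+\frac{357}{152}b_\tau>0$. For small $s$ the expansions $a_\tau=\frac43s^4+O(s^8)$ and $b_\tau=-\frac{8}{45}s^6+O(s^8)$ give positivity. Near the endpoint, however, the two leading terms nearly cancel: $\frac{357}{152}\cdot\frac8{45}\,s^2\approx0.4175\cdot3.59\approx1.5$, which is comparable to $\frac43$. So no crude truncation will suffice. I would first write $\beta_\tau/s^4$ as an explicit power series in $u=s^2$, namely $\beta=\sum_{k\ge2}c_k u^k$ with $c_2=\frac43$, $c_3=-\frac{357}{152}\cdot\frac8{45}$, and so on. For $k\ge4$ the coefficients are alternating with factorially decaying size. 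I would then bound the tail after a fixed number of terms by the first omitted term, after checking that successive ratios are below $1$ for $u\le\tau_{\mathrm c}^2/4<3.6$, as in the proof of \cref{lem:hermite}. Finally I would verify positivity of the resulting polynomial lower bound on $[0,3.6]$, either by exact rational arithmetic on a few subintervals or via Sturm's theorem.

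If the margin turns out to be too thin for this, the fallback is a derivative argument parallel to the one for $g$. I would show that some combination such as $\beta_\tau/s^4$ is decreasing in $s$, and then evaluate it at $s=\tau_{\mathrm c}/2$ using rigorous enclosures of $\sin$ and $\cos$ together with the relation $\sin(\tau_{\mathrm c}/2)=\tau_{\mathrm c}/4$. That relation simplifies the endpoint value to an expression in $s$ and $\cos s$ alone.
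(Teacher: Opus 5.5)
Your proof is correct. For the identity and for $\beta_\tau>0$ it follows the paper. The paper also passes to $s=\tau/2$ and writes the $\beta$-bracket $1200(1-\cos s)-319s\sin s-281s^2$ as $\sum_{k\ge2}(-1)^k(638k-1200)s^{2k}/(2k)!$. It discards the tail from $k=6$ on, which is nonnegative because successive ratios are at most $3/55$. It then checks that $P_\beta(u)=\frac{19}{6}-\frac{119}{120}u+\frac{169}{5040}u^2-\frac{199}{362880}u^3$ is decreasing on $[0,18/5]$ with $P_\beta(18/5)=1187/210000>0$. So the truncation you leave open should be taken through $k=5$.

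Where you differ is $\alpha_\tau$. The paper treats $\sum_{k\ge3}(-1)^{k+1}(2k-4)s^{2k}/(2k)!$ as an alternating series with ratio at most $9/70$, and this needs $s^2<18/5$. Your argument, $g'''(s)=\sin s-s\cos s>0$ with $g(0)=g'(0)=g''(0)=0$, gives $g>0$ on all of $(0,\pi)$. It needs no numerics and nothing about $\tau_{\mathrm c}$ beyond $\tau_{\mathrm c}<2\pi$, which holds by definition. Reading off the sign of $\gamma_\tau=\tau\psi(\tau)$ from \cref{lem:critical-sign} is likewise more economical than the paper's re-derivation.

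The one step you take for granted carries real weight: $\tau_{\mathrm c}^2/4<18/5$. The margin is thin. $\beta_\tau$ itself changes sign near $s^2\approx3.61$ (that is, $\tau\approx3.80$), while $s_{\mathrm c}^2\approx3.593$. So this bound must be certified, not read off the decimal value of $\tau_{\mathrm c}$. The paper certifies it with the five-term alternating bound $\sin s_0/s_0<87361/175000<1/2$ at $s_0=\sqrt{18/5}$. Equivalently, $\psi(2\sqrt{18/5})<0$, which by \cref{lem:critical-sign} forces $\tau_{\mathrm c}<2\sqrt{18/5}$. With that in hand, your truncation plan goes through exactly as in the paper.
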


The proof is an alternating series estimate and is deferred to
\cref{app:technical-estimates}.

\subsection{Triangle-free moment inequalities}

We next record the two moment inequalities used in the main proof.  We first
prove a four-vertex inequality and then derive the supporting six-vertex
inequality needed for coefficient matching.

For a finite graph $H$ of order $k$, let $t_{\rm ind}(H,W)$ denote its
\emph{unlabeled induced density}: the probability that the random graph
obtained from $k$ independent uniform points, with edges inserted
independently with probabilities $W(x_i,x_j)$, is isomorphic to $H$.  Thus
the values over all unlabeled graphs of order $k$ sum to one.  This is
$k!/|\operatorname{Aut}(H)|$ times the induced density for one fixed
labeling of $H$.

\begin{lemma}\label{lem:flag-c4}
If $W$ is triangle-free and $p=\homd(K_2,W)$, then
\begin{equation}\label{eq:linear-c4}
 \homd(C_4,W)\ge p-\frac38.
\end{equation}
\end{lemma}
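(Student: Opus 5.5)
The plan is to prove \eqref{eq:linear-c4} by a short flag algebra computation on four vertices. Equality holds at $B_1$, where $\homd(C_4,B_1)=2\cdot(1/2)^4=1/8=p-3/8$, so the certificate must be tight there. First I would list the triangle-free graphs of order $4$: $\overline{K_4}$, $K_2+2K_1$, $2K_2$, $P_3+K_1$, $P_4$, $K_{1,3}$ and $C_4$. Their unlabeled induced densities $t_{\rm ind}(H,W)$ are nonnegative and sum to one; densities of graphs containing a triangle vanish. By averaging over $4$-vertex samples, both sides become linear in these densities:
\[
 p=\sum_H\frac{e(H)}{6}\,t_{\rm ind}(H,W).
\]
For $\homd(C_4,W)$, I would expand the homomorphism density into injective and non-injective parts. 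The non-injective maps, which identify opposite vertices of the $4$-cycle, contribute the cherry density $\homd(P_3,W)=\int d^2$, where $d(x)=\int_0^1W(x,y)\,dy$. This term must be kept, since it is of the same order as the injective part at $B_1$. I will need to handle it either by writing $\int d^2$ itself in $4$-vertex induced densities, or by working directly with the codegree representation
\[
 \homd(C_4,W)=\int_{[0,1]^2}\mathrm{codeg}(x,y)^2\,dx\,dy .
\]

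The inequality should then follow as
\[
 \homd(C_4,W)-p+\tfrac38=\sum_H c_H\,t_{\rm ind}(H,W)+Q,
\]
where every $c_H\ge0$ and $Q\ge0$ is a sum of squares of type-$1$ or type-$2$ flags, for instance the rooted degree flag $x\mapsto d(x)-\tfrac12$. A natural candidate is the square term $\int(d(x)-\tfrac12)^2\,dx$ combined with the triangle-free relation $d(x)+d(y)\le1$ on edges, already used in \cref{lem:spectral-mantel}. Equivalently, this is the positivity of $\int W(x,y)\bigl(1-d(x)-d(y)\bigr)$, which can be rewritten as a combination of $3$- and $4$-vertex densities. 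I would find the multipliers by solving the small LP/SDP exactly in rationals. The tightness at $B_1$ forces $c_{C_4}=0$ and $c_{\overline{K_4}}$, $c_{K_2+2K_1}$-type constraints compatible with the density vector of $B_1$, which concentrates on $\overline{K_4}$, $K_2+2K_1$, $P_3+K_1$, $K_{1,3}$ and $C_4$-type samples. Those constraints pin down most of the coefficients.

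As a cross-check, and possibly a fallback, I would also try the spectral route. Write $\rho=\lambda_{\max}(T_W)$. Then $\homd(C_4,W)=\sum_j\lambda_j^4\ge\rho^4+\sum_{\lambda_j<0}\lambda_j^4$. The proof of \cref{lem:spectral-mantel} gives $\sum_{\lambda_j<0}|\lambda_j|^3\ge\rho^3$, and H\"older's inequality gives
\[
 \sum_{\lambda_j<0}\lambda_j^4\ge\frac{\rho^6}{p-\rho^2}.
\]
The difficulty is that combining this with only $p\le\rho$ is lossy near $\rho=1/2$, where the derivative comparison fails. It would therefore need an extra relation between $p$ and $\rho$, for example $p\le\rho$ together with $2\rho^2\le p$ used jointly. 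The main obstacle, whichever route is used, is getting a certificate that is exactly tight at $B_1$ with matching first-order behaviour there. On the flag route this amounts to correctly accounting for the degenerate term $\int d^2$ in $\homd(C_4,W)$ and choosing the right rooted square. I would settle it by the explicit rational LP/SDP computation, then verify it coefficient by coefficient.
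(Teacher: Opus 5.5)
You have the right framework, since the paper's proof is also a four-vertex flag algebra certificate that is tight at $B_1$. But the proof of this lemma \emph{is} the certificate, and you do not produce one. Moreover, the concrete ingredients you propose cannot produce one, and your dictionary for $\homd(C_4,W)$ is wrong.

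For a graphon there is no non-injective part: the set $\{x_1=x_3\}$ is null, and for triangle-free $W$ one has exactly $\homd(C_4,W)=\tfrac13 t_{\rm ind}(C_4,W)$. Degenerate closed walks contribute only $O(n^3)$ to $\Tr A^4$ and vanish after dividing by $n^4$. Adding $\int d^2=\tfrac14$ at $B_1$ would contradict your own value $\homd(C_4,B_1)=\tfrac18$. Also, four points sampled from $B_1$ induce $K_{a,4-a}$, so the density vector of $B_1$ is supported on $4K_1$, $K_{1,3}$, $C_4$ only (weights $\tfrac18,\tfrac12,\tfrac38$), not on $K_2\sqcup2K_1$ or $P_3\sqcup K_1$.

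The constraint you miss is tangency. Along $B_q$ with $q=1-2\epsilon$, the target $\homd(C_4,W)-p+\tfrac38$ equals $3\epsilon^2+O(\epsilon^3)$, so every nonnegative term of a certificate must be $O(\epsilon^2)$ there. Your edge relation $Z=\int W(x,y)\bigl(1-d(x)-d(y)\bigr)$ equals $q(1-q)/2$ on $B_q$, which is first order, so it must get weight zero. In fact, put $S_1=\int(d-\tfrac12)^2$ and $S_0=(p-\tfrac12)^2$; tightness at $B_1$ forces every square over the empty or one-vertex type to be a multiple of $S_0$ or $S_1$. Requiring the target minus $aS_1+bS_0+cZ$ to have nonnegative coefficients on the seven triangle-free four-vertex graphs is then infeasible:
the $K_{1,3}$ row gives $b\ge\tfrac12$;
the $P_4$ row gives $a\ge\tfrac32+b+2c$;
the $4K_1$ row gives $a+b\le\tfrac32$.

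The missing idea is squares over the two-vertex types. Let $N_0,N_1,N_2,N_{12}$ be the extensions of a labeled nonedge by a vertex adjacent to neither label, only the first, only the second, or both. Let $E_1,E_2$ be the extensions of a labeled edge by a vertex adjacent to exactly one endpoint. The paper verifies, row by row on the seven graphs, the identity
\begin{align*}
 \homd(C_4,W)-p+\tfrac38
 ={}&\tfrac38\,\mathcal D_{\sigma_{\rm ne}}\bigl((N_0-N_{12})^2\bigr)
 +\tfrac58\,\mathcal D_{\sigma_{\rm ne}}\bigl((N_1-N_2)^2\bigr)\\
 &+\tfrac18\,\mathcal D_{\sigma_{\rm e}}\bigl((E_1-E_2)^2\bigr)
 +\tfrac7{48}\,t_{\rm ind}(K_2\sqcup2K_1,W)
 +\tfrac{11}{24}\,t_{\rm ind}(2K_2,W).
\end{align*}
Its slack vanishes on $4K_1$, $K_{1,3}$, $C_4$, as tightness demands, and on $P_3\sqcup K_1$, $P_4$, as tangency demands. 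This is what your ``explicit rational LP/SDP computation'' would have to return; without it there is no proof.

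Your spectral fallback does not close the gap either. The bound $\rho^4+\rho^6/(p-\rho^2)$ is increasing in $\rho$, so adding $\rho\le\sqrt{p/2}$ changes nothing. At $\rho=p$ it gives $p^4+p^5/(1-p)$, whose slope at $p=\tfrac12$ is $\tfrac54>1$. At $p=0.49$ it is about $0.1130<0.115=p-\tfrac38$.
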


\begin{proof}
We first specify the normalization used in the identity.  A type is a labeled
graph.  A $\sigma$-flag is a partially labeled graph whose labeled vertices
induce $\sigma$.  For a type $\sigma$ on two labeled vertices and two
$\sigma$-flags $F_i,F_j$, each with one additional unlabeled vertex, let
$\mathcal D_\sigma(F_iF_j)$ be the downward average whose coefficient on an
unlabeled four-vertex graph $H$ is
\begin{align}
 [\mathcal D_\sigma(F_iF_j)]_H
 =\frac1{24}
 \sum_{\substack{\theta:[2]\hookrightarrow V(H)\\
                   H[\theta([2])]\cong\sigma}}
 \ \sum_{\substack{x,y\in V(H)\setminus\theta([2])\\x\ne y}}
 &\mathbf 1\!\left\{
 H[\theta([2])\cup\{x\}]\cong F_i\right\}\notag\\[-2pt]
 {}\times&
 \mathbf 1\!\left\{
 H[\theta([2])\cup\{y\}]\cong F_j\right\}.
 \label{eq:flag-normalization}
\end{align}
Here each induced subgraph inherits the labels through $\theta$, and every
isomorphism in \eqref{eq:flag-normalization} preserves them.  Extend the
definition bilinearly.  For every graphon $W$ and every real linear
combination $X$ of $\sigma$-flags,
$\mathcal D_\sigma(X^2)(W)\ge0$; it is the probability of the labeled type
times a conditional second moment.

Let $\sigma_{\rm ne}$ be two labeled nonadjacent vertices.  Write
$N_0,N_1,N_2,N_{12}$ for the four extensions in which the third vertex is
adjacent, respectively, to neither label, only label $1$, only label $2$, or
both labels.  Let $\sigma_{\rm e}$ be a labeled edge, and let $E_1,E_2$ be the
extensions adjacent only to the first or only to the second endpoint.  Put
\[
 \mathcal A=\mathcal D_{\sigma_{\rm ne}}\bigl((N_0-N_{12})^2\bigr),\quad
 \mathcal B=\mathcal D_{\sigma_{\rm ne}}\bigl((N_1-N_2)^2\bigr),\quad
 \mathcal C=\mathcal D_{\sigma_{\rm e}}\bigl((E_1-E_2)^2\bigr).
\]
For a formal linear combination $\mathcal Q$ of unlabeled four-vertex
graphs, set
\[
 \mathcal Q(W)=\sum_H[\mathcal Q]_H t_{\rm ind}(H,W).
\]
Since $\homd(K_3,W)=0$, the sampled graph on four vertices is triangle-free
almost surely, so its isomorphism type is one of the seven graphs below.
The first four columns give the coefficients of the target
$\homd(C_4,W)-p+3/8$ and of
$\mathcal A,\mathcal B,\mathcal C$; the last gives the residual after
subtracting $3\mathcal A/8+5\mathcal B/8+\mathcal C/8$ from the target.
\[
\begin{array}{@{}lrrrrr@{}}
\toprule
H&\text{target}&\mathcal A&\mathcal B&\mathcal C&\text{residual}\\
\midrule
4K_1                 & 3/8  & 1    & 0    & 0    & 0\\
K_2\sqcup2K_1        & 5/24 & 1/6  & 0    & 0    & 7/48\\
P_3\sqcup K_1        & 1/24 &-1/6  & 1/6  & 0    & 0\\
2K_2                 & 1/24 & 0    &-2/3  & 0    &11/24\\
K_{1,3}              &-1/8  &-1/2  & 0    & 1/2  & 0\\
P_4                   &-1/8  & 0    &-1/6  &-1/6  & 0\\
C_4                   & 1/24 & 1/3  & 0    &-2/3  & 0\\
\bottomrule
\end{array}
\]
For example, the $K_2$-density has coefficient $e(H)/6$, and the homomorphism
density of $C_4$ has coefficient $1/3$ on the graph $C_4$.  Equality of the
seven coefficient rows proves the identity
\begin{align}
 \homd(C_4,W)-p+\frac38
 ={}&\frac38\mathcal A(W)+\frac58\mathcal B(W)
      +\frac18\mathcal C(W)\notag\\
 &+\frac7{48}t_{\rm ind}(K_2\sqcup2K_1,W)
 +\frac{11}{24}t_{\rm ind}(2K_2,W).
 \label{eq:flag-certificate}
\end{align}
The two induced densities and the three downward-square terms on the right
are nonnegative.  This proves \eqref{eq:linear-c4}.
\end{proof}

We now explain the coefficients of the six-vertex inequality.  With
\[
 d_4=\homd(C_4,W)-\frac18,\qquad
 d_6=\homd(C_6,W)-\frac1{32},\qquad
 d_p=p-\frac12,
\]
the four-vertex inequality and the density bound in
\cref{lem:spectral-mantel} give the valid directions
$d_4-d_p\ge0$ and $-d_p\ge0$.  Since $b_\tau<0$, these two directions do
not control the coefficient vector $(a_\tau,b_\tau,-\tau^2)$.  We therefore
seek a third valid direction $(u,v,-1)$ such that this vector lies in the
cone generated by $(u,v,-1)$, $(1,0,-1)$, and $(0,0,-1)$ throughout
$0<\tau\le\tau_{\mathrm c}$.  A numerical flag algebra search, constrained
to be tight at $B_1$, located the rational direction
\[
 (u,v,-1)=\left(\frac{119}{100},-\frac{38}{75},-1\right).
\]
This direction is normalized so that the coefficient of $d_p$ is $-1$, and
it is tight at $B_1$.  The exact certificate below proves its validity, while
\cref{lem:coefficient-signs} verifies the required cone containment.  After
centering at $B_1$ and clearing the denominator $2400$, this direction is
exactly the inequality stated next.

\begin{theorem}\label{thm:flag-c246}
If \(W\) is triangle-free and \(p=\homd(K_2,W)\), then
\begin{equation}\label{eq:flag-c246}
 881-2400p+2856\homd(C_4,W)-1216\homd(C_6,W)\ge0.
\end{equation}
\end{theorem}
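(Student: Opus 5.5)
The inequality \eqref{eq:flag-c246} will be proved by an exact flag algebra certificate, in the same format as the proof of \cref{lem:flag-c4}, but on six vertices.

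\emph{Rewriting the target.} Every term is expanded in the basis of unlabeled induced densities $t_{\rm ind}(H,W)$. Here $H$ runs over the triangle-free graphs on six vertices; there are $38$ of them up to isomorphism. The constant $881$ is multiplied by $\sum_H t_{\rm ind}(H,W)=1$. The density $p$ becomes $\sum_H \tfrac{e(H)}{15}t_{\rm ind}(H,W)$. For $\homd(C_4,W)$ I will average over four-vertex subsets, using the homomorphism-to-induced conversion already tabulated in \cref{lem:flag-c4}. For $\homd(C_6,W)$, I will not use injective counts alone. A homomorphism of $C_6$ into a triangle-free graph can be non-injective: the closed walk may revisit vertices and trace a tree or a $C_4$ with pendant structure. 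So $\homd(C_6,W)$ must be written exactly as a combination of injective densities of the homomorphic images of $C_6$ (for example $K_2$, $P_3$, $K_{1,3}$, $P_4$, $C_4$ with a pendant edge, and $C_6$ itself), and then each of these is lifted to six vertices. This produces a target vector $c\in\mathbb Q^{38}$, and the claim is that $\sum_H c_H\,t_{\rm ind}(H,W)\ge0$.

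\emph{The certificate.} Next I choose types $\sigma$ on $k\in\{2,4\}$ labeled vertices: every triangle-free graph on $2$ or $4$ vertices. For each $\sigma$ I take the $\sigma$-flags with $(6+k)/2$ vertices, which have one or three unlabeled vertices respectively. I then look for rational positive semidefinite matrices $Q_\sigma$ and nonnegative slacks $s_H$ such that
\[
 c=\sum_\sigma \mathcal D_\sigma\bigl(F_\sigma^{\!\top}Q_\sigma F_\sigma\bigr)+\sum_H s_H\,e_H
\]
holds coefficientwise. The downward operator $\mathcal D_\sigma$ is normalized as in \eqref{eq:flag-normalization}, extended to six vertices. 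Nonnegativity of each summand then gives \eqref{eq:flag-c246}, exactly as in \cref{lem:flag-c4}.

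\emph{Tightness at $B_1$.} The inequality is tight at $B_1$: there $p=1/2$, $\homd(C_4)=1/8$ and $\homd(C_6)=1/32$, so $881-1200+357-38=0$. This forces two things. First, $s_H=0$ for every $H$ that appears with positive probability in $B_1$, namely the complete bipartite graphs $K_{a,6-a}$. Second, every flag vector arising from $B_1$-rooted samples must lie in $\ker Q_\sigma$. I will impose these kernel constraints before solving, to reduce the dimension.

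\emph{Main obstacle.} The hard step is making the numerical SDP solution exact. I would solve the SDP numerically with the kernel constraints, then project onto the affine space of exact equalities over $\mathbb Q$ restricted to the orthogonal complement of the forced kernels. After that I would round and verify positive semidefiniteness by an exact $LDL^{\top}$ decomposition. Since the kernels are forced, the rounding step is what breaks tightness, and projecting onto the forced-kernel subspace is what recovers exactness. If rounding still fails, I would enlarge the set of types, for example adding $3$-vertex types with $\tfrac92$ impossible, or retaining the $2$-vertex types with $4$-vertex flags combined with the already proven \cref{lem:flag-c4} multiplied by nonnegative edge-type flags. Finally, I would present the certificate as a table of exact rational data and check it by computer.
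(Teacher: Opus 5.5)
Your overall plan (a six-vertex flag algebra certificate with rational Gram matrices, nonnegative slacks, and an exact $LDL^{\mathsf T}$ check) is the paper's. However, the first step as you describe it produces the wrong target. For a graphon, $\homd(C_6,W)$ is the probability that the $W$-random graph on six independent uniform points contains the labeled cycle $1\,2\cdots6$. Those points are almost surely distinct, so non-injective maps carry measure zero in the defining integral. The correct expansion therefore uses injective counts only,
\[
 \homd(C_6,W)=\sum_{H}\frac{\operatorname{inj}(C_6,H)}{6!}\,t_{\rm ind}(H,W)
 =\sum_{H}\frac{N_6(H)}{60}\,t_{\rm ind}(H,W),
\]
and likewise with $N_4(H)/45$ for $C_4$. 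The quotient decomposition $\hom(C_6,G)=\sum_P\operatorname{inj}(C_6/P,G)$ that you invoke is a finite-graph identity. After dividing by $n^6$, its degenerate terms are $O(n^{-1})$ and vanish in the limit. If you add the images $K_2,P_3,K_{1,3},P_4,\dots$ with their positive multiplicities, your target vector at $B_1$ equals $-1216$ times a positive quantity; the $K_2$ image alone contributes a positive multiple of $\homd(K_2,B_1)=1/2$. The inequality you would try to certify is then false, and the SDP is infeasible. Your own $C_4$ step already follows the injective-only rule: the proof of \cref{lem:flag-c4} gives $\homd(C_4,W)$ coefficient $1/3$ on the graph $C_4$ and nothing for degenerate walks on $K_2$ or $P_3$.

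Once this is fixed, your route coincides with the paper's, but two further points bear on the exact rounding you identify as the main obstacle.

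\emph{Tightness is stronger than you state.} The target at $B_q$ is $881-1200q+357q^4-38q^6$, whose derivative vanishes at $q=1$. Hence every nonnegative term of the certificate must be $O((1-q)^2)$ on $B_q$. This forces three more zero slacks, on $K_{1,5}$, $K_{2,4}$, $K_{3,3}$ each minus one edge; this accounts for the seven vanishing residuals in the paper. It also forces a second kernel vector for $\sigma_{\rm ne}$ with its labels in opposite parts, the paper's $k_{\rm ne}^{(2)}=e_2+e_7+2e_{14}$. Once only the $B_1$-rooted kernels are imposed, the linear identities force a positive combination of $k_{\rm ne}^{(2)\mathsf T}Q_{\sigma_{\rm ne}}k_{\rm ne}^{(2)}$ and those three slacks to vanish. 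A generic rounding therefore makes one of them negative, and enlarging the type set does not cure this.

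\emph{The type set differs.} The paper's certificate uses the empty type with three-vertex flags together with $\sigma_{\rm ne},\sigma_{\rm e}$ with four-vertex flags, and no four-vertex types. Your list omits the empty type, so the existence of the paper's certificate does not guarantee that your SDP is feasible.
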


\begin{proof}
Let \(\mathcal H_6\) be the set of the \(38\) isomorphism classes of
triangle-free graphs on six vertices, and put
\(p_6(H;W)=t_{\rm ind}(H,W)\).  For \(k\in\{4,6\}\), let
\[
 N_k(H)=\frac{\operatorname{inj}(C_k,H)}{2k},
\]
where \(\operatorname{inj}(C_k,H)\) is the number of injective
homomorphisms from \(C_k\) to \(H\).  Thus \(N_k(H)\) counts unoriented,
not necessarily induced, copies of \(C_k\).  Sampling six vertices and then
an injective image of the relevant graph gives
\begin{align}
 p&=\sum_{H\in\mathcal H_6}\frac{e(H)}{15}p_6(H;W),\notag\\
 \homd(C_4,W)&=\sum_{H\in\mathcal H_6}\frac{N_4(H)}{45}p_6(H;W),\notag\\
 \homd(C_6,W)&=\sum_{H\in\mathcal H_6}\frac{N_6(H)}{60}p_6(H;W).
 \label{eq:six-point-dictionary}
\end{align}
The denominators are, respectively, \(\binom62=15\),
\(6\cdot5\cdot4\cdot3/8=45\), and \(6!/12=60\).

We use the empty type with three-vertex flags and the nonedge and edge types
on two labeled vertices with four-vertex flags.  Denote the latter two
types by $\sigma_{\rm ne}$ and $\sigma_{\rm e}$, respectively.  For each
\(\sigma\in\{\varnothing,\sigma_{\rm ne},\sigma_{\rm e}\}\), let
\(M_H^\sigma\) be its downward product matrix on \(H\).  The supplementary
certificate, encoded and verified as described in
\cref{app:certificate}, gives rational positive semidefinite matrices
\(Q_\sigma\) and rational numbers \(\eta_H\ge0\) such
that, for every \(H\in\mathcal H_6\),
\begin{equation}\label{eq:six-point-certificate}
 \frac{119}{100}\frac{N_4(H)}{45}
 -\frac{38}{75}\frac{N_6(H)}{60}
 -\frac{e(H)}{15}+\frac{881}{2400}
 =
 \sum_{\sigma\in\{\varnothing,\sigma_{\rm ne},\sigma_{\rm e}\}}
 \langle Q_\sigma,M_H^\sigma\rangle+\eta_H,
\end{equation}
where \(\langle X,Y\rangle=\Tr(X^{\mathsf T}Y)\).
Averaging \eqref{eq:six-point-certificate} with weights \(p_6(H;W)\)
turns each matrix term into a nonnegative quadratic form.  Indeed, if
\(v_\sigma\) is the vector of flag densities conditional on a labeled copy
of $\sigma$, then
\[
 \sum_{H\in\mathcal H_6}p_6(H;W)M_H^\sigma
 =\mathbb E\!\left[
   \mathbf1_{\{\text{the labeled vertices induce }\sigma\}}
   v_\sigma v_\sigma^{\mathsf T}\right]\succeq0,
\]
with the indicator omitted for the empty type; here $\succeq0$ denotes
positive semidefiniteness.  Since also \(\eta_H\ge0\),
averaging gives
\[
 \frac{119}{100}\left(\homd(C_4,W)-\frac18\right)
 -\frac{38}{75}\left(\homd(C_6,W)-\frac1{32}\right)
 -\left(p-\frac12\right)\ge0.
\]
Multiplication by \(2400\) gives \eqref{eq:flag-c246}.
\end{proof}

\subsection{A coefficient criterion for spectral functionals}

The preceding inequalities generate a sufficient dual cone of coefficient
vectors for which the balanced complete bipartite graphon is extremal.  The
following criterion isolates this part of the argument.  It will be applied
with $h(x)=L(\tau x)$ and $c=\tau^2$.

\begin{proposition}\label{prop:spectral-functional-criterion}
Let $h\colon[-1/2,1/2]\to\mathbb R$ be continuous and satisfy
$h(x)=O(x^2)$ as $x\to0$.  Suppose that there are $a,b\in\mathbb R$ such
that
\begin{equation}\label{eq:abstract-minorant}
 h(x)\ge ax^4+bx^6\qquad (|x|\le1/2)
\end{equation}
and
\begin{equation}\label{eq:abstract-contact}
 h(1/2)=h(-1/2)=\frac{a}{16}+\frac{b}{64}.
\end{equation}
For a triangle-free graphon $W$, define
\[
 \Phi_{h,c}(W)=c\homd(K_2,W)-\sum_jh(\lambda_j),
\]
where the sum is over the nonzero eigenvalues of $T_W$, counted with
multiplicity.  Set
\begin{equation}\label{eq:abstract-coefficients}
 \alpha=-\frac{75}{38}b,\qquad
 \beta=a+\frac{357}{152}b,\qquad
 \gamma=c-a-\frac38b.
\end{equation}
If $\alpha,\beta,\gamma\ge0$, then
\begin{equation}\label{eq:abstract-extremal-bound}
 \Phi_{h,c}(W)\le\Phi_{h,c}(B_1)
 =\frac c2-\frac a8-\frac b{32}.
\end{equation}
More precisely,
\begin{equation}\label{eq:abstract-edge-deficit}
 \Phi_{h,c}(B_1)-\Phi_{h,c}(W)
 \ge\gamma\left(\frac12-\homd(K_2,W)\right).
\end{equation}
Moreover, $B_1$ is the unique maximizer among triangle-free graphons, up to
weak isomorphism, if either
$\gamma>0$, or if $\gamma=0$, $\alpha+\beta>0$, and
\begin{equation}\label{eq:abstract-strictness}
 h(x)>ax^4+bx^6\qquad (0<|x|<1/2).
\end{equation}
\end{proposition}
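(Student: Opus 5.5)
The proof compares the spectral sum with the minorant \eqref{eq:abstract-minorant}, reduces everything to the edge, $C_4$ and $C_6$ densities, and then writes the resulting linear expression as a nonnegative combination of three valid triangle-free inequalities.

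\textbf{Step 1: reduction to moments.} Let $W$ be triangle-free. By \cref{lem:spectral-mantel}, every nonzero eigenvalue of $T_W$ satisfies $|\lambda_j|\le1/2$, so \eqref{eq:abstract-minorant} applies to each eigenvalue. Since $h(x)=O(x^2)$ and $\sum_j\lambda_j^2<\infty$, the series $\sum_j h(\lambda_j)$ converges absolutely. Using $\sum_j\lambda_j^{4}=\homd(C_4,W)$ and $\sum_j\lambda_j^6=\homd(C_6,W)$, we get
\[
 \sum_j h(\lambda_j)\ge a\homd(C_4,W)+b\homd(C_6,W).
\]
For $B_1$ the nonzero spectrum is exactly $\{1/2,-1/2\}$. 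Hence \eqref{eq:abstract-contact} gives
\[
 \Phi_{h,c}(B_1)=\frac c2-\frac a8-\frac b{32}.
\]

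\textbf{Step 2: the deficit in centred variables.} With the centred quantities $d_p,d_4,d_6$ introduced before \cref{thm:flag-c246}, Step 1 yields
\[
 \Phi_{h,c}(B_1)-\Phi_{h,c}(W)\ge -c\,d_p+a\,d_4+b\,d_6 .
\]
We use three valid inequalities:
\begin{itemize}
\item $X:=\tfrac{119}{100}d_4-\tfrac{38}{75}d_6-d_p\ge0$, from \cref{thm:flag-c246};
\item $Y:=d_4-d_p\ge0$, from \cref{lem:flag-c4};
\item $Z:=-d_p\ge0$, from \cref{lem:spectral-mantel}.
\end{itemize}

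\textbf{Step 3: coefficient matching.} We claim the identity $-c\,d_p+a\,d_4+b\,d_6=\alpha X+\beta Y+\gamma Z$, checked one variable at a time.
\begin{itemize}
\item For $d_6$: the coefficient is $-\tfrac{38}{75}\alpha=b$.
\item For $d_4$: the coefficient is $\tfrac{119}{100}\alpha+\beta$. Since $\tfrac{119}{100}\cdot\tfrac{75}{38}=\tfrac{357}{152}$, this equals $-\tfrac{357}{152}b+a+\tfrac{357}{152}b=a$.
\item For $d_p$: the coefficient is $-(\alpha+\beta+\gamma)$. Since $\tfrac{357}{152}-\tfrac{75}{38}=\tfrac38$, we have $\alpha+\beta=a+\tfrac38b$, and therefore $\alpha+\beta+\gamma=c$.
\end{itemize}
Because $\alpha,\beta\ge0$ and $X,Y\ge0$, we obtain
\[
 \Phi_{h,c}(B_1)-\Phi_{h,c}(W)\ge\gamma Z=\gamma\left(\tfrac12-\homd(K_2,W)\right).
\]
This is \eqref{eq:abstract-edge-deficit}. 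Since $\gamma\ge0$ and $Z\ge0$, it also gives \eqref{eq:abstract-extremal-bound}.

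\textbf{Step 4: uniqueness when $\gamma>0$.} Equality forces $\homd(K_2,W)=1/2$. The equality case of \cref{lem:spectral-mantel} then shows that $W$ is weakly isomorphic to $B_1$.

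\textbf{Step 5: uniqueness when $\gamma=0$.} This is the delicate case. Equality forces equality in Step 1, so by \eqref{eq:abstract-strictness} every nonzero eigenvalue lies in $\{\pm1/2\}$. We split into two subcases.
\begin{itemize}
\item \emph{$W=0$ almost everywhere.} Evaluate $X$ and $Y$ directly. At $W=0$ we have $d_p=-\tfrac12$, $d_4=-\tfrac18$ and $d_6=-\tfrac1{32}$. Hence
\[
 X=\tfrac{881}{2400}>0,\qquad Y=\tfrac38>0 .
\]
Because $\alpha+\beta>0$, the deficit is strictly positive, so $W=0$ is not a maximizer.
\item \emph{$W\ne0$.} Triangle-freeness gives $\sum_j\lambda_j^3=0$, so the eigenvalues $+1/2$ and $-1/2$ occur with equal multiplicity $k\ge1$. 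Then $k/2=\sum_j\lambda_j^2\le\int W\le1/2$, which forces $k=1$ and $\homd(K_2,W)=1/2$. \Cref{lem:spectral-mantel} again identifies $W$ with $B_1$.
\end{itemize}

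The only points needing care are the exact rational coefficient identities in Step 3 and the separate treatment of $W=0$ in Step 5.
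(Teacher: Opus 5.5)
Your proposal is correct and follows the paper's proof essentially step for step. Both arguments use the same minorant reduction to $d_p,d_4,d_6$ and the same decomposition $\alpha X+\beta Y+\gamma Z$ into the six-vertex flag inequality, the four-vertex inequality and the Mantel bound. They also handle uniqueness the same way, including the separate exclusion of the zero graphon via $X=881/2400$, $Y=3/8$ and the equal-multiplicity argument for $W\neq0$. Your explicit remark that $|\lambda_j|\le1/2$ (so the minorant applies to every eigenvalue) and your line-by-line check of the rational coefficient identities are details the paper leaves implicit.
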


\begin{proof}
Only finitely many eigenvalues lie outside any neighborhood of the origin,
while $h(x)=O(x^2)$ controls the remaining tail because
$\sum_j\lambda_j^2<\infty$.  Thus the spectral sum is absolutely
convergent.  Put $p=\homd(K_2,W)$ and write
\[
 d_4=\homd(C_4,W)-\frac18,\qquad
 d_6=\homd(C_6,W)-\frac1{32},\qquad
 d_p=p-\frac12.
\]
Let $R(x)=h(x)-ax^4-bx^6$.  The nonzero eigenvalues of $B_1$ are
$1/2$ and $-1/2$, so \eqref{eq:abstract-contact} and the trace identities
give
\begin{align}
 \Phi_{h,c}(B_1)-\Phi_{h,c}(W)
 ={}&a d_4+b d_6-cd_p+\sum_jR(\lambda_j).\label{eq:abstract-first-deficit}
\end{align}
The first three terms admit the exact decomposition
\begin{align}
 a d_4+b d_6-cd_p
 ={}&\alpha\left(
 \frac{119}{100}d_4-\frac{38}{75}d_6-d_p\right)\notag\\
 &+\beta(d_4-d_p)+\gamma(-d_p).
 \label{eq:abstract-deficit}
\end{align}
Indeed, comparison of the coefficients of $d_6,d_4,d_p$, in this order,
gives \eqref{eq:abstract-coefficients}.  The first bracket in
\eqref{eq:abstract-deficit} is nonnegative by \cref{thm:flag-c246}, the
second by \cref{lem:flag-c4}, and the third by
\cref{lem:spectral-mantel}.  Every $R(\lambda_j)$ is nonnegative by
\eqref{eq:abstract-minorant}.  This proves
\eqref{eq:abstract-extremal-bound}; retaining the third term in
\eqref{eq:abstract-deficit} also gives
\eqref{eq:abstract-edge-deficit}.

If $\gamma>0$, equality forces $d_p=0$.  Hence $p=1/2$, and
\cref{lem:spectral-mantel} identifies $W$ with $B_1$ up to weak
isomorphism.  Now suppose that the second set of conditions holds.  The zero
graphon cannot give equality: its first two brackets in
\eqref{eq:abstract-deficit} are $881/2400$ and $3/8$, respectively.  Thus
$W$ is nonzero.  Equality and \eqref{eq:abstract-strictness} force every
nonzero eigenvalue of $T_W$ to belong to $\{-1/2,1/2\}$.  Since
\[
 \sum_j\lambda_j^3=\homd(K_3,W)=0,
\]
the two values occur with equal positive multiplicity.  Consequently,
\[
 \frac12\le\sum_j\lambda_j^2
 =\int W^2\le p\le\frac12.
\]
Thus $p=1/2$, and \cref{lem:spectral-mantel} again yields the equality
statement.
\end{proof}

\begin{corollary}\label{cor:three-moment-region}
Let $a,b,c\in\mathbb R$ satisfy
\[
 b\le0,\qquad
 a+\frac{357}{152}b\ge0,\qquad
 c-a-\frac38b\ge0.
\]
Then every triangle-free graphon $W$ satisfies
\[
 c\homd(K_2,W)-a\homd(C_4,W)-b\homd(C_6,W)
 \le \frac c2-\frac a8-\frac b{32}.
\]
If $c-a-3b/8>0$, then equality holds only when $W$ is weakly isomorphic to
$B_1$.
\end{corollary}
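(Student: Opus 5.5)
This corollary follows from the decomposition used in \cref{prop:spectral-functional-criterion}, now without any spectral minorant. The key observation is that the left side is linear in the three densities, so the spectral step is not needed. Put $p=\homd(K_2,W)$ and
\[
 d_4=\homd(C_4,W)-\tfrac18,\qquad d_6=\homd(C_6,W)-\tfrac1{32},\qquad d_p=p-\tfrac12 .
\]
The values $\homd(C_4,B_1)=1/8$ and $\homd(C_6,B_1)=1/32$ follow from the eigenvalues $\pm1/2$. Hence the claimed inequality is equivalent to
\[
 a d_4+b d_6-c d_p\ge0 .
\]

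Define $\alpha=-\tfrac{75}{38}b$, $\beta=a+\tfrac{357}{152}b$ and $\gamma=c-a-\tfrac38 b$. The hypotheses say exactly that $\alpha,\beta,\gamma\ge0$. I will invoke the exact identity \eqref{eq:abstract-deficit}:
\[
 a d_4+b d_6-cd_p=\alpha\Bigl(\tfrac{119}{100}d_4-\tfrac{38}{75}d_6-d_p\Bigr)+\beta(d_4-d_p)+\gamma(-d_p).
\]
This identity is a pure coefficient check, valid for arbitrary real $a,b,c$. Comparing coefficients:
\begin{itemize}
\item for $d_6$: $-\tfrac{38}{75}\alpha=b$;
\item for $d_4$: $\tfrac{119}{100}\alpha+\beta=-\tfrac{119}{100}\cdot\tfrac{75}{38}b+a+\tfrac{357}{152}b=a$, since $\tfrac{119\cdot 75}{3800}=\tfrac{357}{152}$;
\item for $d_p$: $-(\alpha+\beta+\gamma)=-c$, since $\alpha+\beta=a+\tfrac{357-300}{152}b=a+\tfrac38 b$.
\end{itemize}

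Each bracket is nonnegative for a triangle-free $W$:
\begin{itemize}
\item the first by \cref{thm:flag-c246}, after dividing \eqref{eq:flag-c246} by $2400$;
\item the second by \cref{lem:flag-c4}, since $d_4-d_p=\homd(C_4,W)-p+\tfrac38$;
\item the third by \cref{lem:spectral-mantel}, since $p\le\tfrac12$.
\end{itemize}
Summing with the nonnegative weights $\alpha,\beta,\gamma$ gives the inequality.

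For the equality case, suppose $\gamma>0$. Equality forces every weighted bracket to vanish, and in particular $\gamma(-d_p)=0$, so $p=\tfrac12$. The equality clause of \cref{lem:spectral-mantel} then identifies $W$ with $B_1$ up to weak isomorphism.

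Alternatively, one could apply \cref{prop:spectral-functional-criterion} directly with $h(x)=ax^4+bx^6$. This $h$ satisfies \eqref{eq:abstract-minorant} and \eqref{eq:abstract-contact} trivially, and $\sum_j h(\lambda_j)=a\homd(C_4,W)+b\homd(C_6,W)$. There is no real obstacle; the only point needing care is the arithmetic of the coefficient identity.
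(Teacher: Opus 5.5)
Your proof is correct and is essentially the paper's argument. The paper applies \cref{prop:spectral-functional-criterion} with $h(x)=ax^4+bx^6$, which is the alternative you mention; because the remainder $R=h-ax^4-bx^6$ vanishes identically, that proposition's proof reduces to exactly your decomposition \eqref{eq:abstract-deficit}, the three moment inequalities, and the equality clause of \cref{lem:spectral-mantel}, so your unrolled version only skips the trivial spectral bookkeeping.
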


\begin{proof}
Apply \cref{prop:spectral-functional-criterion} with
$h(x)=ax^4+bx^6$.  The three displayed assumptions are precisely
$\alpha,\beta,\gamma\ge0$ in \eqref{eq:abstract-coefficients}; if the last
one is strict, then $\gamma>0$.
\end{proof}

\section{Proofs of the triangle-free extremal and stability results}
\label{sec:triangle-free-problem}

\begin{proof}[Proof of \Cref{thm:global}]
When $\tau=0$, both sides of \eqref{eq:global-sharp} vanish.  Fix
$0<\tau\le\tau_{\mathrm c}$ and apply
\cref{prop:spectral-functional-criterion} with
\[
 h(x)=L(\tau x),\qquad c=\tau^2,\qquad
 a=a_\tau,\qquad b=b_\tau.
\]
The minorant and its strict contact set are given by \cref{lem:hermite};
the contact identity follows from \eqref{eq:hermite-coefficients}; and
\cref{lem:coefficient-signs} gives the required signs of
$\alpha_\tau,\beta_\tau,\gamma_\tau$.  The criterion therefore shows that
$B_1$ maximizes $\Phi_\tau$.  It also gives uniqueness when
$0<\tau<\tau_{\mathrm c}$ because $\gamma_\tau>0$, and at
$\tau=\tau_{\mathrm c}$ because $\gamma_\tau=0$,
$\alpha_\tau+\beta_\tau>0$, and the minorant is strict for
$0<|x|<1/2$.  Substitution of $q=1$ in \eqref{eq:bq} gives
\eqref{eq:global-sharp}.

It remains to prove sharpness of the interval.  For fixed $\tau>0$, put
$f_\tau(q)=\Phi_\tau(B_q)$.  By \cref{prop:bq,lem:critical-sign}, if
$\tau>\tau_{\mathrm c}$, then
\[
 f_\tau'(1)=\frac{\tau}{2}
 \left(4\sin\frac\tau2-\tau\right)<0.
\]
Continuity of $f_\tau'$ gives an $\varepsilon>0$ such that it is negative on
$[1-\varepsilon,1]$.  Hence
$f_\tau(1)-f_\tau(1-\varepsilon)
=\int_{1-\varepsilon}^1f_\tau'(q)\,dq<0$, so $B_1$ is not a maximizer.
\end{proof}

\begin{proof}[Proof of \Cref{thm:stability}]
Let $I$, $\tau$, and $W$ be as in the theorem.  Specializing
\eqref{eq:abstract-edge-deficit} as in the proof of \cref{thm:global}
gives
\begin{align*}
 \Phi_\tau(B_1)-\Phi_\tau(W)
 &\ge\gamma_\tau\left(\frac12-\homd(K_2,W)\right)\\
 &\ge\gamma_I\left(\frac12-\homd(K_2,W)\right).
\end{align*}
This proves \eqref{eq:edge-deficit-stability}.  Applying
\cref{lem:quantitative-mantel} with
$\eta=1/2-\homd(K_2,W)\le\varepsilon/\gamma_I$ gives
\eqref{eq:cut-stability}.
\end{proof}

\begin{remark}\label{rem:sqrt-sharpness}
The exponent $1/2$ in \eqref{eq:cut-stability} is best possible.  Let $W_s$
be the complete bipartite graphon with part measures $1/2+s$ and $1/2-s$,
where $0<s<1/2$.  Its edge deficit is $2s^2$, while for every relabeling
$B_1^\varphi$,
\[
 \lVert W_s-B_1^\varphi\rVert_\square
 \ge s\left(\frac12-s\right).
\]
Together with \eqref{eq:mantel-cut-stability}, this gives
$\delta_\square(W_s,B_1)=\Theta(s)$.  The nonzero eigenvalues of
$T_{W_s}$ are $\pm\sqrt{1/4-s^2}$, and therefore, uniformly for $\tau$ in a
compact subinterval of $(0,\tau_{\mathrm c})$,
\[
 \Phi_\tau(B_1)-\Phi_\tau(W_s)
 =4\tau\sin(\tau/2)s^2+O(s^4)=\Theta_I(s^2).
\]
Thus no larger power of the functional deficit can replace the square root
in \eqref{eq:cut-stability}.
\end{remark}

The explicit estimate degenerates at $\tau_{\mathrm c}$, but compactness
and uniqueness give qualitative stability up to the endpoint.  The value
$\tau=0$ is excluded because $\Phi_0$ vanishes identically.

\begin{corollary}\label{cor:qualitative-stability}
Let $I\subset(0,\tau_{\mathrm c}]$ be nonempty and compact.  Suppose that
$\tau_n\in I$ and that $W_n$ is a triangle-free graphon for every $n$.  If
\[
 \Phi_{\tau_n}(B_1)-\Phi_{\tau_n}(W_n)\longrightarrow0,
\]
then
\[
 \delta_\square(W_n,B_1)\longrightarrow0.
\]
\end{corollary}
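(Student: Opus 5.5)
I will argue by contradiction, using compactness of the graphon space together with the joint continuity from \cref{cor:continuity-attainment}. Suppose the conclusion fails. Then there are $\varepsilon_0>0$ and a subsequence, still indexed by $n$, along which $\delta_\square(W_n,B_1)\ge\varepsilon_0$.

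Since $I$ is compact, I can pass to a further subsequence on which $\tau_n\to\tau^*\in I$; in particular $0<\tau^*\le\tau_{\mathrm c}$. By compactness of the space of graphons modulo weak isomorphism, another subsequence gives graphons $W_n$ (after suitable relabelings, which change neither $\Phi$ nor $\delta_\square$) converging in cut distance to some graphon $W$. The triangle density $\homd(K_3,\cdot)$ is cut-continuous and vanishes on every $W_n$, so $W$ is triangle-free.

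Now apply \cref{cor:continuity-attainment} twice. First, $\Phi_{\tau_n}(W_n)\to\Phi_{\tau^*}(W)$. Second, applied to the constant sequence $B_1$, it gives $\Phi_{\tau_n}(B_1)\to\Phi_{\tau^*}(B_1)$; this also follows directly from the explicit formula $4(1-\cos(\tau/2))$ in \cref{prop:bq}. The hypothesis $\Phi_{\tau_n}(B_1)-\Phi_{\tau_n}(W_n)\to0$ therefore yields $\Phi_{\tau^*}(W)=\Phi_{\tau^*}(B_1)=M_\triangle(\tau^*)$, using \cref{thm:global}. So $W$ is a triangle-free maximizer at $\tau^*\in(0,\tau_{\mathrm c}]$. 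The uniqueness part of \cref{thm:global}, which also covers the endpoint $\tau_{\mathrm c}$, then shows that $W$ is weakly isomorphic to $B_1$, so $\delta_\square(W,B_1)=0$.

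Finally, the triangle inequality for $\delta_\square$ gives
\[
\delta_\square(W_n,B_1)\le\delta_\square(W_n,W)+\delta_\square(W,B_1)\to0,
\]
which contradicts $\delta_\square(W_n,B_1)\ge\varepsilon_0$.

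I do not expect a real obstacle here. The only delicate point is joint continuity in $(\tau,W)$, which \cref{cor:continuity-attainment} already supplies through the uniform tail bound of \cref{prop:tail}. One should also check that $\tau^*$ stays inside $(0,\tau_{\mathrm c}]$: this holds because $I$ is a compact subset of $(0,\tau_{\mathrm c}]$, so it excludes $0$, where uniqueness fails.
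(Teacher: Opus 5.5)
Your proof is correct and follows essentially the same route as the paper: argue by contradiction, extract a subsequence with $\tau_n\to\tau\in I$ and $W_n\to W$ in cut distance, use \cref{cor:continuity-attainment} to get $\Phi_\tau(W)=\Phi_\tau(B_1)$, and invoke the uniqueness part of \cref{thm:global} (valid because $\tau>0$) to reach a contradiction. Your extra details are exactly the steps the paper leaves implicit: triangle-freeness of the limit, convergence of $\Phi_{\tau_n}(B_1)$, and the final triangle inequality for $\delta_\square$.
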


\begin{proof}
Suppose that the conclusion fails.  After passing to a subsequence, there is
$\varepsilon_0>0$ such that
$\delta_\square(W_n,B_1)\ge\varepsilon_0$ for every $n$.  Compactness of
$I$ and of the graphon space gives a further subsequence for which
$\tau_n\to\tau\in I$ and $W_n$ converges in cut distance to a triangle-free
graphon $W$.  By \cref{cor:continuity-attainment},
\[
 \Phi_\tau(W)=\Phi_\tau(B_1).
\]
Since $\tau>0$, \cref{thm:global} implies that $W$ is weakly isomorphic to
$B_1$, contradicting
$\delta_\square(W_n,B_1)\ge\varepsilon_0$.
\end{proof}

\begin{proof}[Proof of \Cref{thm:finite-dense}]
We first prove \eqref{eq:finite-variational-limit}.  Suppose otherwise.
Then there are a compact interval $J$, a number $\varepsilon>0$, integers
$n_k\to\infty$, and $\tau_k\in J$ such that
\begin{equation}\label{eq:uniform-limit-contradiction}
 |m_{n_k}(\tau_k)-M_\triangle(\tau_k)|\ge\varepsilon.
\end{equation}
Pass to a subsequence with $\tau_k\to\tau\in J$.  For each $k$, choose a
maximizer $G_k\in\cT_{n_k}$.  Graphon compactness
\cite[Chap.~9]{Lovasz2012} gives a further subsequence such that $W_{G_k}$ converges
in cut distance to a triangle-free graphon $W$.  The locally uniform
convergence in \cref{thm:graphon-limit} yields
\[
 \lim_{k\to\infty}m_{n_k}(\tau_k)
 =\Phi_\tau(W)\le M_\triangle(\tau).
\]

For the reverse inequality, choose a triangle-free maximizer $W_\tau$ of
$\Phi_\tau$, which exists by \cref{cor:continuity-attainment}.  Sample
graphs $H_n$ from $W_\tau$.  Because
$\binom n3\homd(K_3,W_\tau)=0$, every $H_n$ is triangle-free almost surely.
The graphon sampling theorem
\cite{LovaszSzegedy2006} (see also \cite[Chap.~10]{Lovasz2012}) permits a
realization for which $W_{H_n}\to W_\tau$ in cut distance.  Applying
\cref{thm:graphon-limit} along $n_k$ gives
\[
 \liminf_{k\to\infty}m_{n_k}(\tau_k)
 \ge\lim_{k\to\infty}\Phi_{n_k}(H_{n_k},\tau_k)
 =M_\triangle(\tau).
\]
Thus $m_{n_k}(\tau_k)\to M_\triangle(\tau)$.  Since
$M_\triangle(\tau_k)\to M_\triangle(\tau)$ by
\cref{cor:continuity-attainment}, this contradicts
\eqref{eq:uniform-limit-contradiction}.

We next prove \eqref{eq:finite-global-rate}.  Fix a compact interval
$I\subset[0,\tau_{\mathrm c}]$.  The bridge identity and
$D_G(\tau)\ge0$ give, uniformly for $\tau\in I$,
\[
 m_n(\tau)\le\frac{n}{n-1}M_\triangle(\tau)
 =M_\triangle(\tau)+O_I(n^{-1}).
\]
Put
$T_2(n)=K_{\lfloor n/2\rfloor,\lceil n/2\rceil}$.
Write $a=\lfloor n/2\rfloor$, $b=\lceil n/2\rceil$, and
$r_n=\sqrt{ab}/n=1/2+O(n^{-2})$.  The nonzero eigenvalues of
$T_{W_{T_2(n)}}$ are $r_n$ and $-r_n$, so
\[
 \Phi_\tau(W_{T_2(n)})=4(1-\cos(\tau r_n))
 =\Phi_\tau(B_1)+O_I(n^{-2}).
\]
Using $T_2(n)$ in the bridge identity and \cref{eq:defect-bound} now gives
\[
 m_n(\tau)\ge\Phi_\tau(B_1)-O_I(n^{-1}).
\]
Together with \cref{thm:global}, these two estimates prove
\eqref{eq:finite-global-rate}.

\end{proof}

\begin{corollary}\label{cor:finite-stability}
Let $I\subset(0,\tau_{\mathrm c}]$ be a nonempty compact interval, and let
$\tau_n\in I$ and $G_n\in\cT_n$ for $n\ge2$.  If
\[
 m_n(\tau_n)-\Phi_n(G_n,\tau_n)\longrightarrow0,
\]
then
\[
 \frac{2e(G_n)}{n^2}\longrightarrow\frac12,
 \qquad
 \delta_\square(W_{G_n},B_1)\longrightarrow0.
\]
\end{corollary}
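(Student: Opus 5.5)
The argument has three steps: transfer the finite near-optimality to the graphon level through the bridge estimate \eqref{eq:bridge-error} and \cref{thm:finite-dense}, apply \cref{cor:qualitative-stability}, and then read off the edge density from cut convergence.

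\emph{Step 1: graphon-level near-optimality.} Since $I\subset[0,\tau_{\mathrm c}]$ is compact, \eqref{eq:finite-global-rate} gives $m_n(\tau_n)-\Phi_{\tau_n}(B_1)\to0$. The bridge estimate \eqref{eq:bridge-error} has a right-hand side that is uniformly $O(n^{-1})$ on the compact set $I$, so $\Phi_n(G_n,\tau_n)-\Phi_{\tau_n}(W_{G_n})\to0$. Combining these with the hypothesis yields
\[
 \Phi_{\tau_n}(B_1)-\Phi_{\tau_n}(W_{G_n})\longrightarrow0 .
\]
Each $W_{G_n}$ is triangle-free because $\homd(K_3,W_{G_n})=\homd(K_3,G_n)=0$. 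The sequence $\tau_n$ lies in the compact set $I\subset(0,\tau_{\mathrm c}]$. Hence \cref{cor:qualitative-stability} applies directly and gives $\delta_\square(W_{G_n},B_1)\to0$.

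\emph{Step 2: edge density.} Edge density is continuous in the cut distance, and $\homd(K_2,W_{G_n})=2e(G_n)/n^2$ while $\homd(K_2,B_1)=1/2$. Therefore $2e(G_n)/n^2\to1/2$. Alternatively, one can argue by contradiction with subsequences and graphon compactness; this gives the same conclusion and shows that the sequence formulation involves no hidden subtlety.

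\emph{Obstacle.} The only delicate point is that $\tau_n$ varies with $n$. Every estimate used must therefore be uniform in $\tau$ over $I$. The rate \eqref{eq:finite-global-rate} is stated uniformly, the bridge error is uniform on compacts, and \cref{cor:qualitative-stability} is already formulated for varying $\tau_n\in I$. Since uniformity is guaranteed at each stage, I expect no genuine difficulty. The step that needs the most care is Step 1, where the three differences must be combined with the correct signs: $m_n\ge\Phi_n(G_n,\cdot)$ and $\Phi_{\tau}(B_1)\ge\Phi_\tau(W)$ make all deficits nonnegative, so each tends to zero.
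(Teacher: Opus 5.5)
Your proposal is correct and follows the paper's proof essentially verbatim. The paper likewise bounds $\Phi_{\tau_n}(B_1)-\Phi_{\tau_n}(W_{G_n})$ by $|\Phi_{\tau_n}(B_1)-m_n(\tau_n)|+\bigl(m_n(\tau_n)-\Phi_n(G_n,\tau_n)\bigr)$ plus the uniform $O(n^{-1})$ error from \eqref{eq:bridge-error}, invokes \eqref{eq:finite-global-rate} and \cref{cor:qualitative-stability}, and reads off the edge density by cut continuity of $\homd(K_2,\cdot)$. One small correction: your closing remark that all the deficits are nonnegative is not accurate, since $m_n(\tau_n)-\Phi_{\tau_n}(B_1)$ and the bridge difference can have either sign; this is harmless because you only use that each term tends to zero.
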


\begin{proof}
Let $I$, $\tau_n$, and $G_n$ satisfy the assumptions of the corollary.
Let $T=\max_{\tau\in I}|\tau|$ and
$K_I=T^2+(\e^T-1-T)^2$.  By \cref{thm:global,eq:bridge-error},
\begin{align*}
0\le{}&\Phi_{\tau_n}(B_1)-\Phi_{\tau_n}(W_{G_n})\\
\le{}&\left|\Phi_{\tau_n}(B_1)-m_n(\tau_n)\right|
 +m_n(\tau_n)-\Phi_n(G_n,\tau_n)
 +\frac{K_I}{n-1}.
\end{align*}
The right side tends to zero by \eqref{eq:finite-global-rate} and the
near-maximality assumption.  Applying
\cref{cor:qualitative-stability} gives the cut distance conclusion.  The edge
density conclusion follows from cut continuity of $\homd(K_2,\cdot)$ and
$\homd(K_2,B_1)=1/2$.
\end{proof}

\section{The bipartite graphon problem}\label{sec:bipartite-problem}

We first reduce an arbitrary bipartite graphon to a member of
$\{B_q:0\le q\le1\}$.

\begin{proposition}\label{prop:bip-reduction}
Let $W$ be a bipartite graphon, let $p=\homd(K_2,W)$, and let $\tau>0$.  Then
$0\le p\le1/2$ and
\begin{equation}\label{eq:bip-reduction}
 \Phi_\tau(W)\le\Phi_\tau(B_{2p}).
\end{equation}
If $p>0$, equality holds only when $W$ is weakly isomorphic to $B_{2p}$.
\end{proposition}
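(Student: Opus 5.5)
The plan is to use the spectral representation of \cref{prop:spectral-form} together with the monotonicity of the function $L(z)=z^2-2(1-\cos z)$ from \eqref{eq:hermite-coefficients}. Fix the partition $[0,1]=X\sqcup Y$ witnessing bipartiteness and put $s=\mu(X)$. Then $p=2\int_{X\times Y}W\le 2s(1-s)\le 1/2$, which gives the first claim. Let $K\colon L^2(Y)\to L^2(X)$ be the integral operator with kernel $W|_{X\times Y}$; it is Hilbert--Schmidt. Because $W$ vanishes on $X^2\cup Y^2$, $T_W$ has the block form $\begin{pmatrix}0&K\\K^*&0\end{pmatrix}$. Hence its nonzero eigenvalues are exactly $\pm\sigma_j$, where $\sigma_1\ge\sigma_2\ge\cdots>0$ are the nonzero singular values of $K$, counted with multiplicity.

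Substituting these eigenvalues into \cref{prop:spectral-form} gives
\[
 \Phi_\tau(W)=\tau^2p-\sum_j\bigl(2\tau^2\sigma_j^2-4(1-\cos\tau\sigma_j)\bigr)
 =\tau^2p-2\sum_j L(\tau\sigma_j).
\]
The series converges absolutely because $L(z)=O(z^2)$ and $\sum_j\sigma_j^2<\infty$. The same formula applied to $B_{2p}$, whose only singular value is $p$ (its eigenvalues are $\pm q/2$ with $q=2p$), gives $\Phi_\tau(B_{2p})=\tau^2p-2L(\tau p)$. This agrees with \eqref{eq:bq}. So \eqref{eq:bip-reduction} reduces to showing
\[
 \sum_jL(\tau\sigma_j)\ge L(\tau p).
\]

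Since $L'(z)=2(z-\sin z)\ge0$ for $z\ge0$, with $L(0)=0$ and $L(z)>0$ for $z>0$, every term in the sum is nonnegative and $L$ is strictly increasing on $[0,\infty)$. It therefore suffices to prove $\sigma_1\ge p$, i.e.\ a lower bound on the top singular value. To get it, test $K$ against normalized indicators:
\[
 \sigma_1\ge\frac{\langle K\mathbf1_Y,\mathbf1_X\rangle}{\sqrt{s(1-s)}}
 =\frac{p/2}{\sqrt{s(1-s)}}\ge p,
\]
using $s(1-s)\le1/4$. This proves the inequality.

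For the equality case with $p>0$, equality forces $L(\tau\sigma_j)=0$, hence $\sigma_j=0$, for all $j\ge2$. It also forces $\sigma_1=p$, which requires $s=1/2$ and equality in the variational bound. So $K$ has rank one, $\mathbf1_X$ and $\mathbf1_Y$ are the top singular vectors, and $K=\sigma_1\,\tfrac{\mathbf1_X}{\|\mathbf1_X\|}\otimes\tfrac{\mathbf1_Y}{\|\mathbf1_Y\|}$. Consequently $W$ equals the constant $2p$ a.e.\ on $X\times Y$ with $\mu(X)=\mu(Y)=1/2$, which is $B_{2p}$ after a measure-preserving relabeling. The only step needing some care is this equality analysis: one must justify that equality in the Rayleigh-type bound $\sigma_1\ge|\langle Kf,g\rangle|$ for unit $f,g$ forces $f,g$ to be top singular vectors. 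For a rank-one compact operator this follows directly from Cauchy--Schwarz. The other steps are routine.
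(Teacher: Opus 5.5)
Your proposal is correct and follows the paper's proof essentially step for step. The paper's $g_\tau(s)=4(1-\cos\tau s)-2\tau^2s^2$ is exactly your $-2L(\tau s)$, and the paper uses the same constant-function bound $s_1\ge p/(2\sqrt{ab})\ge p$ and the same rank-one equality analysis. You still need the trivial case $p=0$, which the paper treats separately: a part may then be null, and your normalization by $\sqrt{s(1-s)}$ is undefined.
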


\begin{proof}
If $p=0$, then $W=0$ almost everywhere and
$\Phi_\tau(W)=\Phi_\tau(B_0)=0$.  Assume $p>0$.
Choose a bipartition $[0,1]=X\sqcup Y$ with measures $a$ and $b$.  Since
$p>0$, both $a$ and $b$ are positive.  Let
$\mathcal K\colon L^2(Y)\to L^2(X)$ be the cross operator
\[
 (\mathcal Kf)(x)=\int_YW(x,y)f(y)\,dy,
\]
and let $s_1\ge s_2\ge\cdots\ge0$ be the singular values of $\mathcal K$.
The nonzero eigenvalues of $T_W$ are $\pm s_i$, and
\[
 \int W^2=2\sum_i s_i^2.
\]
Consequently, by \eqref{eq:spectral-form},
\begin{equation}\label{eq:bip-singular}
 \Phi_\tau(W)=\tau^2p+\sum_i g_\tau(s_i),
 \qquad
 g_\tau(s)=4(1-\cos(\tau s))-2\tau^2s^2.
\end{equation}
For $s>0$,
\[
 g_\tau'(s)=4\tau\bigl(\sin(\tau s)-\tau s\bigr)<0,
 \qquad g_\tau(s)<0.
\]
Here the first inequality follows from \(\sin x<x\) for \(x>0\), and the
second follows from \(1-\cos x<x^2/2\) for \(x\ne0\).
Since $p=2\int_{X\times Y}W\le2ab\le1/2$, the asserted range of $p$ follows.
The constant function singular value bound gives
\[
 s_1\ge
 \left\langle\frac{\mathbf1_X}{\sqrt a},
 \mathcal K\frac{\mathbf1_Y}{\sqrt b}\right\rangle
 =\frac{p}{2\sqrt{ab}}\ge p,
\]
since $ab\le1/4$.  Therefore
\[
 \sum_i g_\tau(s_i)\le g_\tau(s_1)\le g_\tau(p),
\]
and the right side of \eqref{eq:bip-singular} is
$\Phi_\tau(B_{2p})$ by \eqref{eq:bq}.  If $p>0$ and equality holds, then
equality in
$\sum_i g_\tau(s_i)\le g_\tau(s_1)$ forces
$s_i=0$ for $i\ge2$, while equality in
$g_\tau(s_1)\le g_\tau(p)$ forces $s_1=p$.  Equality throughout
$s_1\ge p/(2\sqrt{ab})\ge p$ then gives $a=b=1/2$ and equality in the
singular value variational bound evaluated at the normalized constant
functions.  Thus $\mathcal K$ has rank one with the
normalized constant functions as its left and right singular vectors.  Its
integral kernel $W|_{X\times Y}$ is therefore the constant $2p$, proving the
equality statement.
\end{proof}

\begin{proof}[Proof of \Cref{thm:threshold}]
When $\tau=0$, the functional vanishes for every graphon.  Assume
$\tau>0$.
By \cref{prop:bip-reduction}, it suffices to maximize
$f_\tau(q)=\Phi_\tau(B_q)$ over $0\le q\le1$.  From \eqref{eq:bq},
\begin{align*}
 f_\tau'(q)
 &=\frac{\tau^2}{2}(1-2q)
   +2\tau\sin(\tau q/2),\\
f_\tau''(q)
 &=\tau^2\bigl(\cos(\tau q/2)-1\bigr)\le0.
\end{align*}
For $\tau>0$, the second derivative is not identically zero on any
nondegenerate interval.  Hence $f_\tau'$ is strictly decreasing and
$f_\tau$ is strictly concave.  At the right endpoint,
\[
 f_\tau'(1)
 =\tau\left(2\sin(\tau/2)-\frac\tau2\right).
\]
By \cref{lem:critical-sign}, $f_\tau'(1)\ge0$ when
$0<\tau\le\tau_{\mathrm c}$.  Since $f_\tau'$ is
strictly decreasing, $f_\tau$ is strictly increasing on $[0,1)$, and its
unique maximum is at $q=1$.  If $\tau>\tau_{\mathrm c}$, then
$f_\tau'(0)=\tau^2/2>0>f_\tau'(1)$.  The strict
decrease of $f_\tau'$ gives a unique interior critical point, which is the
maximum and satisfies
\eqref{eq:q-equation}.  Since \(q_\tau<1\), \(B_1\) does not maximize
\(\Phi_\tau\) even within the bipartite graphons.
\end{proof}

\begin{remark}\label{rem:q-expansion}
Since the derivative of the left side of \eqref{eq:q-equation} with respect
to $q$ at $(\tau_{\mathrm c},1)$ is
$\tau_{\mathrm c}^2(\cos(\tau_{\mathrm c}/2)-1)\ne0$, the
implicit-function theorem gives
\[
 q_\tau=1-\kappa(\tau-\tau_{\mathrm c})
 +O\bigl((\tau-\tau_{\mathrm c})^2\bigr)
 \qquad(\tau\downarrow\tau_{\mathrm c}),
\]
where
\[
 \kappa=
 \frac{\frac12-\cos(\tau_{\mathrm c}/2)}
 {\tau_{\mathrm c}\bigl(1-\cos(\tau_{\mathrm c}/2)\bigr)}>0.
\]
Thus the bipartite maximizer leaves the endpoint $q=1$ linearly at the
threshold.
\end{remark}

\section{Further constructions and open problems}\label{sec:further-problems}

\subsection{A nonbipartite competitor}\label{sec:c5-competitor}

Let $W_{C_5}$ be the balanced blow-up graphon of $C_5$: split $[0,1]$ into
five sets of measure $1/5$, put value one between consecutive parts
cyclically, and put value zero elsewhere.  This graphon is the cut limit of
balanced blow-ups of $C_5$.  Since $K_3$ is forbidden, $C_5$ is the shortest
odd cycle available for this construction.
The first two terms of \eqref{eq:first-moments} explain the comparison with
$B_1$:
\[
 \bigl(\homd(K_2,B_1),\homd(C_4,B_1)\bigr)
 =\left(\frac12,\frac18\right),
 \qquad
 \bigl(\homd(K_2,W_{C_5}),\homd(C_4,W_{C_5})\bigr)
 =\left(\frac25,\frac6{125}\right).
\]
Here $\homd(C_4,W_{C_5})=\hom(C_4,C_5)/5^4=30/625$ counts graph
homomorphisms, equivalently closed walks of length four, rather than copies
of $C_4$ in $C_5$.
Hence \eqref{eq:first-moments} gives
\begin{equation}\label{eq:c5-motivation}
 \Phi_\tau(B_1)-\Phi_\tau(W_{C_5})
 =\frac{\tau^2}{10}-\frac{77\tau^4}{12000}+O(\tau^6).
\end{equation}
Thus $W_{C_5}$ has smaller edge density but also a smaller fourth-order
penalty.  The nonzero eigenvalues of $T_{W_{C_5}}$ are
\[
 \frac25,\qquad
 \frac{\sqrt5-1}{10}\quad\text{(multiplicity two)},\qquad
 -\frac{\sqrt5+1}{10}\quad\text{(multiplicity two)}.
\]

Since $W_{C_5}$ is $\{0,1\}$-valued, \cref{prop:spectral-form} and the
displayed eigenvalues give
\begin{align}
\Phi_\tau(W_{C_5})=2\Bigg[&1-\cos\frac{2\tau}{5}
+2\left(1-\cos\frac{(\sqrt5-1)\tau}{10}\right)\notag\\
&+2\left(1-\cos\frac{(\sqrt5+1)\tau}{10}\right)\Bigg].
\label{eq:c5}
\end{align}

\Cref{thm:global,thm:threshold} identify the first
transition: $B_1$ is the triangle-free maximizer through
$\tau_{\mathrm c}$ and ceases to be optimal immediately afterward.  For
$\tau>\tau_{\mathrm c}$, \cref{thm:threshold} shows that $B_{q_\tau}$
uniquely maximizes $\Phi_\tau$ over the bipartite graphons, up to weak
isomorphism.  Extend the notation by $q_{\tau_{\mathrm c}}=1$ and put
\[
 \Psi(\tau)=\Phi_\tau(W_{C_5})-\Phi_\tau(B_{q_\tau}).
\]
By \cref{thm:global}, $\Psi(\tau_{\mathrm c})<0$, because $W_{C_5}$ is not
weakly isomorphic to $B_1$.  The expansion in \cref{rem:q-expansion} shows
that $\Psi$ is continuous from the right at $\tau_{\mathrm c}$.  Define
\[
 \tau_5=\inf\{\tau>\tau_{\mathrm c}:\Psi(\tau)\ge0\}
 \in(\tau_{\mathrm c},\infty],
\]
where the infimum of the empty set is understood to be infinity.  Numerical
evaluation of \cref{eq:bq,eq:q-equation,eq:c5} suggests
$\tau_5\approx4.73387$.  No rigorous numerical enclosure is asserted.

\begin{figure}[tbp]
 \centering
 \includegraphics[width=0.94\textwidth]{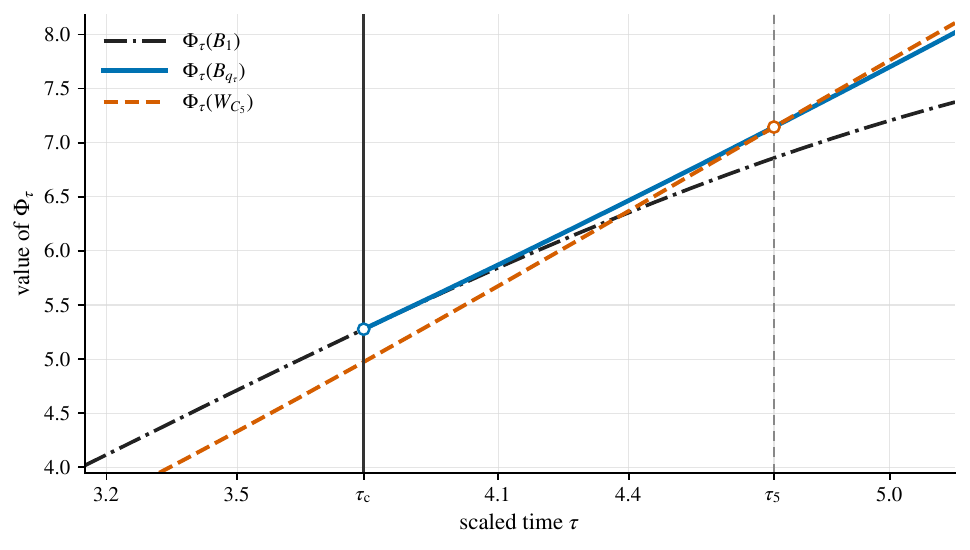}
 \caption{Values of three explicit competitors near the first threshold.
 The dash-dotted curve is $\Phi_\tau(B_1)$; the solid curve is the bipartite
 maximum $\Phi_\tau(B_{q_\tau})$ for $\tau>\tau_{\mathrm c}$; and the dashed
 curve is $\Phi_\tau(W_{C_5})$.  The line at
 $\tau_{\mathrm c}\approx3.79099$ is the proved sharp threshold from
 \cref{thm:global,thm:threshold}.  The line marked
 $\tau_5\approx4.73387$ is only the numerically located sign-changing
 crossing of the last two explicit curves; it is not a proved global
 transition or a rigorous numerical enclosure.}
 \label{fig:phase-transition}
\end{figure}
\FloatBarrier

\subsection{Open problems}

\begin{conjecture}\label{conj:second-phase}
For every $\tau_{\mathrm c}<\tau<\tau_5$,
\[
 M_\triangle(\tau)=\Phi_\tau(B_{q_\tau}).
\]
If a triangle-free graphon $W$ satisfies
$\Phi_\tau(W)=M_\triangle(\tau)$, then $W$ is weakly isomorphic to
$B_{q_\tau}$.
\end{conjecture}

The comparison of $B_{q_\tau}$ with $W_{C_5}$ alone gives no upper bound for
$M_\triangle(\tau)$.  Proving \cref{conj:second-phase} requires an upper bound for
arbitrary nonbipartite triangle-free graphons.  On a compact
$\tau$-interval, \cref{prop:tail} reduces such an upper bound to finitely many
cycle density inequalities plus the remainder in \eqref{eq:tail-bound}.

A natural finite exactness problem remains open.

\begin{problem}\label{prob:finite-exactness}
For fixed $0<\tau<\tau_{\mathrm c}$, is $T_2(n)$ the unique maximizer of
$F_G(\tau/n)$ over $G\in\cT_n$ for every sufficiently large $n$?
\end{problem}

The stability theorem forces every near-maximizing sequence to converge to
$B_1$ in cut distance, but cut distance does not detect a subquadratic number
of edge changes.  Resolving \cref{prob:finite-exactness} therefore requires
a local comparison that excludes internal edges and missing cross-edges near
$T_2(n)$.

\appendix

\section{Proofs of two technical estimates}\label{app:technical-estimates}

\subsection{Quantitative Mantel stability}

\begin{proof}[Proof of \Cref{lem:quantitative-mantel}]
Set
\[
 d(y)=\int_0^1W(y,z)\,dz,
 \qquad
 h(x)=\int_0^1W(x,y)d(y)\,dy.
\]
Interchanging the nonnegative integrals and applying Cauchy--Schwarz gives
\[
 \int_0^1h(x)\,dx=\int_0^1d(y)^2\,dy\ge p^2.
\]
Since the integrand defining $\homd(K_3,W)$ is nonnegative, we may choose
$x$ such that $h(x)\ge p^2$ and
\[
 \int_{[0,1]^2}W(x,y)W(x,z)W(y,z)\,dy\,dz=0.
\]
After modifying the section $W(x,\cdot)$ on a null set if necessary, put
$X=\{y:W(x,y)>0\}$ and $Y=[0,1]\setminus X$.  The last display implies
that $W=0$ almost everywhere on $X^2$.  If
\[
r=\int_{Y^2}W,
\]
then, since $0\le W(x,y)\le1$,
\[
 \int_Xd(y)\,dy\ge h(x),
 \qquad
 r=p-2\int_Xd(y)\,dy
 \le p-2p^2=2p\eta.
\]
This proves \eqref{eq:mantel-internal}.

Put $s=|\mu(X)-1/2|$.  The capacity of the cut $(X,Y)$ gives
\[
 p-r=2\int_{X\times Y}W
 \le2\mu(X)\mu(Y)=\frac12-2s^2.
\]
Hence
\[
 s^2\le\frac{\eta+r}{2}
 \le\eta(1-\eta),
\]
where the last inequality uses $r\le2p\eta=(1-2\eta)\eta$.  Moreover,
\begin{align*}
 \lVert W-K_{X,Y}\rVert_1
 &=r+2\mu(X)\mu(Y)-(p-r)\\
 &=\eta+2r-2s^2\le3\eta,
\end{align*}
which proves \eqref{eq:mantel-balance} and \eqref{eq:mantel-L1}.

Move a set of measure $s$ from the larger part to the smaller part.  The
resulting partition is balanced, so its complete bipartite kernel can be
carried to $B_1$ by a measure-preserving rearrangement.  Its $L^1$-distance
from $K_{X,Y}$ is $2s(1-s)$.  Therefore
\begin{equation}\label{eq:mantel-final-L1}
 \inf_\varphi\lVert W^\varphi-B_1\rVert_1
 \le\eta+2r+2s-4s^2
 \le3\eta-4\eta^2+2s-4s^2.
\end{equation}
If $0\le\eta\le1/16$, then
$s\le\sqrt{\eta(1-\eta)}<1/4$.  Since $2s-4s^2$ is increasing on
$[0,1/4]$, the right side of \eqref{eq:mantel-final-L1} is at most
\[
 3\eta-4\eta^2+2\sqrt{\eta(1-\eta)}-4\eta(1-\eta)
 =2\sqrt{\eta(1-\eta)}-\eta
 \le2\sqrt\eta.
\]
If $1/16\le\eta\le1/2$, then $2s-4s^2\le1/4$, and
\[
 3\eta-4\eta^2+\frac14\le2\sqrt\eta.
\]
For the last inequality, the difference between the right and left sides
has derivative $\eta^{-1/2}-3+8\eta$.  Its minimum is
$3(2^{1/3}-1)>0$, attained at $\eta=2^{-8/3}$, and the difference itself
equals $5/64$ at $\eta=1/16$.  This proves the $L^1$ estimate in
\eqref{eq:mantel-cut-stability}; the cut distance estimate follows because
the cut norm is at most the $L^1$-norm.
\end{proof}

\subsection{Scalar coefficient estimates}

\begin{proof}[Proof of \Cref{lem:coefficient-signs}]
Write \(s=\tau/2\) and \(s_{\mathrm c}=\tau_{\mathrm c}/2\).
The function \(\sin s/s\) is strictly decreasing on \((0,\pi)\), since
\(\sin s-s\cos s=\int_0^s u\sin u\,du>0\).
At \(s_0=\sqrt{18/5}\), its alternating series gives
\[
 \frac{\sin s_0}{s_0}
 <
 1-\frac{s_0^2}{3!}+\frac{s_0^4}{5!}
 -\frac{s_0^6}{7!}+\frac{s_0^8}{9!}
 =\frac{87361}{175000}<\frac12.
\]
Since \(\sin s_{\mathrm c}/s_{\mathrm c}=1/2\), we have
\begin{equation}\label{eq:sc-bound}
 0<s\le s_{\mathrm c}<\sqrt{\frac{18}{5}}<2.
\end{equation}

A direct simplification of \eqref{eq:hermite-coefficients} gives
\begin{align}
 \alpha_\tau&=\frac{2400}{19}
 \bigl(s^2+s\sin s-4(1-\cos s)\bigr),\label{eq:alpha-positive}\\
 \beta_\tau&=\frac8{19}
 \bigl(1200(1-\cos s)-319s\sin s-281s^2\bigr),\label{eq:beta-positive}\\
 \gamma_\tau&=4s(2\sin s-s).\label{eq:gamma-positive}
\end{align}
For the bracket in \eqref{eq:alpha-positive},
\[
 s^2+s\sin s-4(1-\cos s)
 =\sum_{k=3}^{\infty}(-1)^{k+1}
 \frac{(2k-4)s^{2k}}{(2k)!}.
\]
For \(k\ge3\), \eqref{eq:sc-bound} gives the successive ratio
\[
 \frac{(2k-2)s^{2k+2}/(2k+2)!}
      {(2k-4)s^{2k}/(2k)!}
 =\frac{k-1}{k-2}\frac{s^2}{(2k+2)(2k+1)}
 \le\frac9{70}.
\]
The alternating series begins with a positive term, so
\(\alpha_\tau>0\).

For the bracket in \eqref{eq:beta-positive}, put \(u=s^2\).  Its series is
\[
 \sum_{k=2}^{\infty}(-1)^k
 \frac{(638k-1200)s^{2k}}{(2k)!}.
\]
From \(k=5\) onward the successive absolute-term ratio is at most \(3/55\).
The \(k=6\) term is positive, so the alternating tail after the \(k=5\)
term is nonnegative.  Hence the bracket is at least
\(s^4P_\beta(u)\), where
\[
 P_\beta(u)=\frac{19}{6}-\frac{119}{120}u
 +\frac{169}{5040}u^2-\frac{199}{362880}u^3.
\]
On \(0\le u\le18/5\),
\[
 P_\beta'(u)\le-\frac{119}{120}
 +\frac{169}{2520}\frac{18}{5}<0,
 \qquad
 P_\beta(18/5)=\frac{1187}{210000}>0.
\]
Thus \(\beta_\tau>0\).  Finally, by \eqref{eq:sc-bound}, the derivative
\(2\cos s-1\) has only the zero \(\pi/3\) on \([0,s_{\mathrm c}]\).
Hence \(2\sin s-s\) first increases and then decreases to zero at
\(s_{\mathrm c}\).  Equation \eqref{eq:gamma-positive} gives
\(\gamma_\tau>0\) for \(0<\tau<\tau_{\mathrm c}\) and
\(\gamma_{\tau_{\mathrm c}}=0\).
\end{proof}

\section{Verification of the six-vertex certificate}\label{app:certificate}

This appendix records the encoding needed to reproduce the exact
verification of \cref{thm:flag-c246}.  A numerical semidefinite calculation
was used to locate the certificate, but it is not part of the proof.  The
supplementary material provides the rational certificate and a stand-alone
exact-arithmetic verifier.

For a graph on $n$ vertices, a mask records its edges in the lexicographic
order
\[
 \bigl(\{0,1\},\{0,2\},\ldots,\{0,n-1\},
       \{1,2\},\ldots,\{n-2,n-1\}\bigr).
\]
Bits are indexed from zero, and the $k$th bit corresponds to the $k$th pair
in this ordering.  An unlabeled graph is represented by the least mask in
its isomorphism class.  For flags, all isomorphisms fix the labeled vertices
$0$ and $1$ individually.  The flag masks, in matrix order, are
\begin{align*}
 \mathcal F_{\varnothing}={}&(0,1,3),\\
 \mathcal F_{\rm ne}={}&(0,2,6,8,10,12,14,24,26,30,32,34,40,42,44),\\
 \mathcal F_{\rm e}={}&(1,3,7,9,13,25,33,35,41,45).
\end{align*}
Here $\varnothing$ is the empty type, while $\sigma_{\rm ne}$ and
$\sigma_{\rm e}$ are the nonedge and edge types on two labels.  In each
product matrix, $F_i$ and $F_j$ denote flags in the corresponding displayed
order.

For a finite set $V$, let $\binom{V}{k}$ denote its family of $k$-element
subsets.  For an unlabeled six-vertex graph $H$, the empty type product
matrix is
\[
 (M_H^{\varnothing})_{ij}
 =\frac1{\binom63}\,\#\{S\in\tbinom{V(H)}3:
 H[S]\cong F_i,\ H[V(H)\setminus S]\cong F_j\}.
\]
For $\sigma\in\{\sigma_{\rm ne},\sigma_{\rm e}\}$, the corresponding
matrix is
\begin{multline*}
 (M_H^\sigma)_{ij}=\frac1{6\cdot5\binom42}
 \#\{(u,v,S):u\ne v,\ H[\{u,v\}]\cong\sigma,\\
 S\in\tbinom{V(H)\setminus\{u,v\}}2,\ 
 H[\{u,v\}\cup S]\cong F_i,\ 
 H[V(H)\setminus S]\cong F_j\}.
\end{multline*}
The isomorphisms in the second display preserve both labels.  These
normalizations are the unconditional downward averages used in
\eqref{eq:six-point-certificate}.

The certificate factors each Gram matrix as
\[
 Q_\sigma=U_\sigma R_\sigma U_\sigma^{\mathsf T}.
\]
The columns of $U_\sigma$ form a rational basis for the common nullspace of
the following row vectors, where $e_i$ is the $i$th standard coordinate
vector and indices start at zero:
\begin{align*}
 k_{\varnothing}&=(1,0,3),\\
 k_{\rm ne}^{(1)}&=e_0+e_9+2e_{13},\qquad
 k_{\rm ne}^{(2)}=e_2+e_7+2e_{14},\\
 k_{\rm e}&=e_2+e_5+2e_9.
\end{align*}
The bases are normalized so that the rows outside the pivot sets form
identity matrices; the pivot sets are
$\{2\}$, $\{13,14\}$, and $\{9\}$ for the three types, respectively.
The reduced Gram matrices $R_\sigma$, of orders $2$, $13$, and $9$, together
with the $38$ residuals $\eta_H$, are included in the supplementary
certificate.  Exact $LDL^{\mathsf T}$ elimination gives the following
smallest pivots.
\[
\begin{array}{@{}lccc@{}}
\toprule
\text{type}&\varnothing&\sigma_{\rm ne}&\sigma_{\rm e}\\
\midrule
\text{order of }R_\sigma&2&13&9\\
\text{smallest pivot}
&349223/22485600
&6469543/87833875
&1327666493/21687735285\\
\bottomrule
\end{array}
\]
The complete pivot lists are checked by the verifier; every pivot is
positive.  Seven residuals vanish, and the remaining $31$ are positive,
with minimum $107/5400$.

A stand-alone verifier supplied with the supplementary material enumerates
the $5\,789$ labeled triangle-free graphs on six vertices, reduces them to
$38$ isomorphism classes, reconstructs the three flag-product matrices,
checks all identities in \eqref{eq:six-point-certificate}, and verifies the
positive definiteness of the reduced matrices $R_\sigma$ by exact
$LDL^{\mathsf T}$ elimination.  All computations are performed using exact
rational arithmetic.  Thus the verification is independent of the numerical
semidefinite calculation used to locate the certificate.

\section{Two elementary unscaled consequences}\label{sec:unscaled-regimes}

Recall that
$T_2(n)=K_{\lfloor n/2\rfloor,\lceil n/2\rceil}$, and let $t_{\rm u}$ be
the positive solution of
\begin{equation}\label{eq:tu}
 \sin^2t=\frac23t^2,
 \qquad t_{\rm u}\approx1.08041.
\end{equation}

Let $G$ be a fixed finite simple graph with $m$ edges.  Expanding
\eqref{eq:escape} at $t=0$ gives
\begin{equation}\label{eq:fixed-leading-term}
 S_G(t)=2mt^2+O_G(t^4).
\end{equation}

\begin{proposition}\label{prop:unrestricted}
For every $|t|\le t_{\rm u}$,
\begin{equation}\label{eq:unrestricted}
 \sup_{\substack{G\text{ triangle-free}\\ |V(G)|\ge2}}F_G(t)
 =\sin^2t.
\end{equation}
For $0<|t|\le t_{\rm u}$, equality holds only for $K_2$, up to isomorphism.
\end{proposition}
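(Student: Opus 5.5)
We prove the two inequalities $\sup F_G(t)\ge\sin^2t$ and $F_G(t)\le\sin^2t$ separately. The upper bound combines the trace identity \eqref{eq:trace-defect} with Mantel's theorem and an elementary inequality that defines $t_{\rm u}$. Since $S_G$ is even by \cref{lem:basic-identities}, we may assume $0\le t\le t_{\rm u}$, and the case $t=0$ is trivial.

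\emph{Lower bound.} For $G=K_2$ we have $A^2=\Id$, so $U(t)=\cos t\,\Id-\ii\sin t\,A$. Both off-diagonal entries therefore have modulus squared $\sin^2t$, and $F_{K_2}(t)=\sin^2t$. This shows that the supremum is at least $\sin^2t$.

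\emph{Upper bound, general estimate.} Discarding the nonnegative diagonal defect in \eqref{eq:trace-defect} gives
\[
 S_G(t)\le2\Tr(\Id-\cos(tA))=2\sum_j\bigl(1-\cos(t\lambda_j)\bigr).
\]
Using $1-\cos x\le x^2/2$, this is at most $t^2\Tr A^2=2mt^2$. The inequality is strict when $t\ne0$ and $m>0$, because then some eigenvalue is nonzero and $1-\cos x<x^2/2$ for $x\ne0$.

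\emph{The scalar inequality behind $t_{\rm u}$.} The function $\sin t/t$ is strictly decreasing on $(0,\pi)$ and $t_{\rm u}<\pi/2$. By \eqref{eq:tu}, $\sin^2t/t^2$ equals $2/3$ at $t_{\rm u}$, so
\[
 \sin^2t\ge\tfrac23t^2\qquad\text{for }0<t\le t_{\rm u}.
\]
It therefore suffices to show that every triangle-free $G\ne K_2$ on $n\ge2$ vertices satisfies
\[
 S_G(t)<\tfrac23t^2\,n(n-1)\qquad\text{for }0<t\le t_{\rm u}.
\]

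\emph{Case analysis on $n$.} If $m=0$, then $U(t)=\Id$ and $S_G=0$, so there is nothing to prove.
\begin{itemize}
\item For $n=2$, the only graph with an edge is $K_2$, which is the equality case.
\item For $n=3$, a triangle-free graph has $m\le2$. Hence $S_G<4t^2=\tfrac23t^2\cdot3\cdot2$.
\item For $n\ge4$, Mantel's theorem gives $m\le n^2/4$, so $S_G<t^2n^2/2$. The inequality $n^2/2\le\tfrac23n(n-1)$ is equivalent to $3n\le4n-4$, that is, $n\ge4$.
\end{itemize}
In each case with $m>0$ and $G\ne K_2$ the inequality is strict, which gives both the supremum and the uniqueness of $K_2$ for $0<|t|\le t_{\rm u}$.

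\emph{Main obstacle.} The delicate point is that the crude bound $S_G\le2mt^2$ only just suffices. It is exactly tight against $\tfrac23t^2n(n-1)$ at $n=3$ and $n=4$, and this is precisely why the threshold $t_{\rm u}$ is defined by $\sin^2t=\tfrac23t^2$. The care needed is therefore in checking that strictness survives in these boundary cases. It does, because it comes from the strict inequality $1-\cos x<x^2/2$ at a nonzero eigenvalue, not from the Mantel step.
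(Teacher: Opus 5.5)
Your proposal is correct and follows essentially the paper's proof. You drop the diagonal defect in the trace identity, bound the result by $2e(G)t^2$ via $1-\cos x\le x^2/2$, use Mantel to get the factor $\tfrac23$ for $n\ge3$, and compare with $F_{K_2}(t)=\sin^2t$. The only cosmetic difference is in the strictness argument: you use the strict inequality $1-\cos x<x^2/2$ at a nonzero eigenvalue for all $0<t\le t_{\rm u}$, whereas the paper uses $\sin^2t>\tfrac23t^2$ for $|t|<t_{\rm u}$ and invokes the strict cosine inequality only at $|t|=t_{\rm u}$.
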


\begin{proof}
The trace-defect identity and the scalar inequality
$2(1-\cos x)\le x^2$ give
\[
 S_G(t)\le2\Tr(\Id-\cos(tA))
 \le t^2\Tr A^2=2e(G)t^2.
\]
For a triangle-free graph on $n\ge3$ vertices, Mantel's theorem implies
\[
 F_G(t)\le
 \frac{2\lfloor n^2/4\rfloor}{n(n-1)}t^2
 \le\frac23t^2.
\]
On the other hand, $F_{K_2}(t)=\sin^2t$, which is at least
$2t^2/3$ for $|t|\le t_{\rm u}$.  The defining root is unique because
$\sin t/t$ strictly decreases on $(0,\pi)$.

For $0<|t|<t_{\rm u}$ the comparison is strict for every $n\ge3$.  It is
also strict at $|t|=t_{\rm u}$: if $G$ has an edge, then $A$ has a nonzero
eigenvalue and $2(1-\cos(t\lambda))<t^2\lambda^2$ for that eigenvalue; if
$G$ has no edge, its transport is zero.  For $n=2$, the only alternatives
are $K_2$ and the empty graph.  This proves uniqueness.
\end{proof}

\begin{proposition}\label{prop:fixed-n}
For every $n\ge2$, there exists $\varepsilon_n>0$ such that $T_2(n)$ is the
unique maximizer, up to isomorphism, of $F_G(t)$ over $G\in\cT_n$ whenever
$0<t<\varepsilon_n$.  Moreover,
\[
 \lim_{t\downarrow0}\frac{1}{t^2}
 \max_{G\in\cT_n}F_G(t)
 =\frac{2\lfloor n^2/4\rfloor}{n(n-1)}.
\]
\end{proposition}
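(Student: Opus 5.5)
The plan is a direct comparison at order $t^2$. It uses the upper bound $S_G(t)\le 2e(G)t^2$, already obtained in the proof of \cref{prop:unrestricted}, together with a lower bound for the single function $S_{T_2(n)}$. Since $n$ is fixed, only one analytic function needs to be expanded, so no uniformity over graphs is required.

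Fix $n\ge2$ and put $M=\lfloor n^2/4\rfloor=e(T_2(n))$. The first step is the upper bound. For every $G\in\cT_n$, the trace-defect identity \eqref{eq:trace-defect} and the scalar inequality $2(1-\cos x)\le x^2$ give
\[
 S_G(t)\le 2\Tr(\Id-\cos(tA))\le t^2\Tr A^2=2e(G)t^2 .
\]
By Mantel's theorem, $e(G)\le M$, with equality only if $G\cong T_2(n)$. Hence every $G\in\cT_n$ with $G\not\cong T_2(n)$ satisfies $S_G(t)\le 2(M-1)t^2$ for all real $t$.

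The second step is the lower bound for $T_2(n)$. By \cref{lem:basic-identities} and the expansion \eqref{eq:fixed-leading-term}, the function $S_{T_2(n)}$ is even and real analytic with $S_{T_2(n)}(t)=2Mt^2+O_n(t^4)$. Concretely, there are constants $K_n,\delta_n>0$ such that $S_{T_2(n)}(t)\ge 2Mt^2-K_nt^4$ for $|t|\le\delta_n$. One could make $K_n$ explicit from the eigenvalues $\pm\sqrt{\lfloor n/2\rfloor\lceil n/2\rceil}$, but this is unnecessary.

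The third step is the comparison. For $0<t<\varepsilon_n:=\min(\delta_n,\sqrt{2/K_n})$ we obtain
\[
 S_{T_2(n)}(t)-S_G(t)\ge 2t^2-K_nt^4>0
\]
for every $G\not\cong T_2(n)$. Dividing by $n(n-1)$ shows that $T_2(n)$ is the unique maximizer of $F_G(t)$ over $\cT_n$, up to isomorphism. For $n=2$ this is consistent, since $T_2(2)=K_2$.

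The limit statement follows from the same computation. For $0<t<\varepsilon_n$ the maximum equals $F_{T_2(n)}(t)=S_{T_2(n)}(t)/(n(n-1))$, and dividing by $t^2$ and letting $t\downarrow0$ gives $2M/(n(n-1))$.

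No step is a real obstacle. The only point needing care is that the remainder must not be controlled uniformly over all $G$. This is avoided by using the exact one-sided bound $S_G\le 2e(G)t^2$ for the competitors and expanding only the single function $S_{T_2(n)}$.
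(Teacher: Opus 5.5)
Your proof is correct, but it handles the competitors differently from the paper.

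\emph{The paper's route.} It expands every $S_G$ two-sidedly, writing $S_G(t)=2e(G)t^2+R_G(t)$. Finiteness of $\cT_n$ then gives a single constant $C_n$ with $|R_G(t)|\le C_nt^4$ for all $G\in\cT_n$ and $|t|\le1$, and the comparison becomes $2t^2-2C_nt^4>0$.

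\emph{Your route.} You bound every competitor by the exact, global inequality $S_G(t)\le2e(G)t^2$, which comes from the trace-defect identity and $2(1-\cos x)\le x^2$ as in \cref{prop:unrestricted}. So only the single function $S_{T_2(n)}$ needs a Taylor estimate.

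\emph{What each buys.} In your route, $\varepsilon_n$ depends only on $T_2(n)$ and can be made explicit. With $M=\lfloor n^2/4\rfloor$ and $c=\cos(t\sqrt M)$, one has $S_{T_2(n)}(t)=4(1-c)-\frac nM(1-c)^2$, so it suffices that this exceeds $2(M-1)t^2$. The paper's route is softer: it uses only the leading coefficients and finiteness, and needs no one-sided inequality.

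Your closing remark overstates the difficulty. Uniform control of the remainder over $\cT_n$ need not be avoided, because it is automatic from finiteness, and that is exactly what the paper uses.
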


\begin{proof}
By \eqref{eq:fixed-leading-term}, write
\[
 S_G(t)=2e(G)t^2+R_G(t),
 \qquad R_G(t)=O_G(t^4).
\]
Mantel's theorem gives $e(G)\le\lfloor n^2/4\rfloor$, with equality only
for $T_2(n)$ up to isomorphism.  Since $\cT_n$ is finite, there is a
constant $C_n\ge1$ such that $|R_G(t)|\le C_nt^4$ for every $G\in\cT_n$
and $|t|\le1$.  If $G\not\cong T_2(n)$, then
\[
 S_{T_2(n)}(t)-S_G(t)\ge2t^2-2C_nt^4>0
\]
whenever $0<t<\min\{1,C_n^{-1/2}\}$.  This proves uniqueness on an interval
independent of $G\in\cT_n$.  Dividing the expansion by $n(n-1)t^2$ and
letting $t\downarrow0$ gives the stated limit.
\end{proof}

\section*{Data availability}

The exact rational certificate and verification code supporting
\cref{thm:flag-c246}, together with the code used to generate
\cref{fig:phase-transition}, are provided as supplementary material.
No external datasets were used in this study.

\section*{Declaration of competing interest}

The author declares that there is no competing interest.

\end{document}